\numberwithin{equation}{section}
\begin{document}

\title[Tropical Laurent series]{Tropical Laurent series, their tropical roots, and localization results for the eigenvalues of nonlinear matrix functions}


\author{Gian Maria Negri Porzio}
\address{School of Matehmatics, University of Manchester, Alan Turing Building,
Oxford Rd, 
Manchester, UK}
\email{gianmaria.negriporzio@manchester.ac.uk}

\author{Vanni Noferini}
\address{Aalto University, Department of Mathematics and Sytems Analysis, 
P.O. Box 11100, FI-00076, Aalto, Finland.}
\email{vanni.noferini@aalto.fi}
\thanks{Vanni Noferini was supported by an Academy of Finland grant (Suomen Akatemian p\"{a}\"{a}t\"{o}s 331240).}

\author{Leonardo Robol}
\address{Department of Mathematics, University of Pisa, Largo Bruno Pontecorvo, 5, 56127 Pisa (PI), Italy.}
\email{leonardo.robol@unipi.it}
\thanks{Leonardo Robol was supported by the INdAM/GNCS research project 
``Metodi low-rank per problemi di algebra lineare con
struttura data-sparse''.}

\subjclass[2020]{15A80, 15A18, 47A10, 47A56}

\date{}

\dedicatory{}

\newcommand{\C}{\mathbb C}
\newcommand{\R}{\mathbb R}
\newcommand{\N}{\mathbb{N}}
\newcommand{\Z}{\mathbb{Z}}
\newcommand{\iu}{\ensuremath{\mathrm{i}}} 
\newcommand{\eu}{\ensuremath{\mathrm{e}}} 
\newcommand{\nbyn}{\ensuremath{n \times n}}
\newcommand{\Cnn}{\ensuremath{\C^{\nbyn}}}
\newcommand{\arxiv}[1]{\hreff{http://www.arXiv.org/abs/#1}{arXiv:#1}}
\newcommand{\cond}{\operatorname{\kappa}}
\newcommand{\wt}{\widetilde}
\newcommand{\ol}{\overline}
\newcommand{\bigO}{\mathcal{O}}
\newcommand{\tset}{\Omega}

\DeclarePairedDelimiter\abs{\lvert}{\rvert}
\DeclarePairedDelimiter\norm{\lVert}{\rVert}
\DeclarePairedDelimiter\floor{\lfloor}{\rfloor}

\newtheorem{theorem}{Theorem}[section]
\newtheorem{lemma}[theorem]{Lemma}
\newtheorem{proposition}[theorem]{Proposition}
\newtheorem{remark}[theorem]{Remark}
\newtheorem{example}[theorem]{Example}
\newtheorem{corollary}[theorem]{Corollary}
\newtheorem{definition}[theorem]{Definition}

\newcommand{\rmax}{\R_{\max}}
\newcommand{\rmaxm}{\R_{\max,\times}}
\newcommand{\trop}{\operatorname{\mathsf{t\!}}}
\newcommand{\tropx}{\operatorname{\mathsf{t\!}_\times\!}}
\newcommand{\newti}[1]{\mathcal{K}_{#1}}
\newcommand{\newt}[1]{\mathcal{N}_{#1}}
\newcommand{\ann}{\operatorname{\mathcal{A}}}
\newcommand{\annop}{\operatorname{\accentset{\hspace{4.5pt}\circ}{\ann}}}
\newcommand{\rr}{\ensuremath{R_{1}}}  
\newcommand{\RR}{\ensuremath{R_{2}}} 
\newcommand{\Ft}{\ensuremath{\wt{F}}}
\newcommand{\Bt}{\ensuremath{\wt{B}}}
\newcommand{\cj}{\ensuremath{\cond{(B_{k_{j}})}}}
\newcommand{\cs}{\ensuremath{\cond{(B_{k_{s}})}}}
\newcommand{\disk}{\operatorname{\mathcal{D}}}
\newcommand{\diskop}{\accentset\circ{\disk}}
\newcommand{\tp}{\oplus}
\renewcommand{\tt}{\otimes}
\renewcommand{\epsilon}{\varepsilon}

\newcommand{\normt}[1]{\ensuremath{\norm{#1}_{2}}}
\def\mystrut#1{\rule{0cm}{#1}}  

\begin{abstract}
Tropical roots of tropical polynomials have been previously studied
and used to localize roots of classical polynomials and eigenvalues
of matrix polynomials. We extend the theory of tropical roots from
tropical polynomials to tropical Laurent series. Our proposed
definition ensures that, as in the polynomial case, there is a
bijection between tropical roots and slopes of the Newton polygon
associated with the tropical Laurent series. We show that, unlike in
the polynomial case, there may be infinitely many tropical roots;
moreover, there can be at most two tropical roots of infinite
multiplicity. We then apply the new theory by relating the inner and
outer radii of convergence of a classical Laurent series to the
behavior of the sequence of tropical roots of its
tropicalization. Finally, as a second application, we discuss
localization results both for roots of scalar functions that admit a
local Laurent series expansion and for nonlinear eigenvalues of
regular matrix valued functions that admit a local Laurent series
expansion.
\end{abstract}

\maketitle

\section{Introduction}

Tropical algebra is a branch of mathematics developed in the
second half of the twentieth century. The adjective \emph{tropical} was
coined in honour of the Brazilian mathematician Imre Simon, who in 1978
introduced a min-plus structure on
$\N\cup \{\infty\}$ to be used in automata theory~\cite{si:78}. In the
following decades, researchers started using the nomenclature we are
now familiar with. For instance, Cuninghame-Green and Meijer
introduced the term \emph{max-algebra} in 1980~\cite{cume:80}, while
\emph{max-plus} appeared in the nineties and early
two-thousands~\cite{baco:92, mc:06}.

Even though tropical algebra has found many other applications,
in computational mathematics a preeminent one has been to approximately and cheaply locate the roots of
polynomials or, more in general, the eigenvalues of matrix
polynomials.  Indeed, having a cheap initial
estimate of these values is often very important for numerical
methods: for instance, contour integral algorithms  heavily rely on
knowing inclusion region for the eigenvalues~\cite{assa:09, assa:10,
  be:12}; in 2009, Betcke proposed a tropical algebra based diagonal scaling for a 
matrix polynomial $P$ to improve the conditioning of its eigenvalues
$\lambda_{j}$ near a target eigenvalue $\lambda$, which
requires the knowledge of $\abs{\lambda}$~\cite{be:08}; the
Ehrlich--Aberth method for the polynomial eigenvalue problem needs a starting point~\cite{bino:13} which can also be estimated via tropical algebra.  In 1996, Bini proposed an
algorithm to compute polynomial roots based on Aberth's method and
the Newton polygon, but without the formalism of tropical
algebra~\cite{bi:96}.  In 2009, Gaubert and Sharify showed that the
tropical roots of a scalar polynomial can give bounds on its
``standard'' roots~\cite{gash:09}. Later, Noferini, Sharify and
Tisseur~\cite{nosh:15} proposed similar results for matrix-valued
polynomials, and compared their bounds with the ones
in~\cite{binosh:13, me:13}.

The goal of the present paper is to generalize those results to the
case of meromorphic (matrix-valued) functions expressed as Laurent
series. The article is structured as follows: 
Section~\ref{sec:tropical} is devoted to extending the notion of
tropical polynomials to tropical Laurent series and to defining the
concept of tropical roots in these settings. In
Section~\ref{sec:evsLoc} we consider how tropical roots can give
bounds on the eigenvalues of meromorphic matrix-valued functions, just
like in the polynomial case.  Finally, in Section~\ref{sec:applications}
we show how to update the infinite Newton polygon of a tropical
Laurent function under specific circumstances, and how the results of
the previous section may benefit future contour integral eigensolvers.



\section{Algebra and analysis of tropical Laurent series}
\label{sec:tropical}
\subsection{Tropical algebra}
The \emph{max-plus semiring} $\rmax$ is the algebraic structure
composed of the set $\R \cup \{-\infty\}$ equipped with the max
operation $\tp$ as addition, and the standard addition $\tt$ as
multiplication, respectively. The zero and unit elements of this
semiring are $-\infty$ and $0$.

Similarly, one defines the \emph{max-times semiring} $\rmaxm$ as the set of
nonnegative real numbers $\R^{+}$ equipped with the max operation
$\tp$ as addition and the standard multiplication $\tt$ as
multiplication, respectively. Under these settings, the zero and unit
element are $0$ and $1$. $\rmaxm$ is isomorphic with $\rmax$ thanks to
the map $x \mapsto \log x$, with the usual convention $\log(0) =-\infty$.
We refer with the word \emph{tropical} to any of these two structures, as
many other researchers have done before us.

\subsection{From tropical polynomials to tropical series} 
\label{sec:tropicalseries}

A max-plus \emph{tropical polynomial} $\trop p$ is a function
of a variable $x\in\rmax$ of the form
\begin{equation*}
\trop p(x) :=\bigoplus_{j=0}^d \log b_j\otimes x^{\otimes j}
            =\max_{0\leq j\leq d} (\log b_j +j x),
\end{equation*}
where $d$ is a nonnegative integer and $b_0,\dots,b_d\in\R^{+}$. The
\emph{degree} of $\trop p(x)$ is $d$ if $b_{d} \neq 0$. The
finite tropical roots $\log \alpha$ are the points where the maximum is attained at
least twice, which correspond 
to the non-differentiable points of the function $\trop p(x)$.
For such roots, 
the multiplicity can be defined as the jump of the derivative at
$\log\alpha$, that is 
$\lim_{\epsilon\rightarrow 0}\frac{d \trop
	p}{dx}\vert_{\log \alpha+\epsilon}-\frac{d \trop p}{dx}\vert_{\log \alpha-\epsilon}.  $
The multiplicity of $-\infty$ as a root of $\trop p$ is given by
$\inf\{j\mid b_j \neq 0\}$.

 Similarly, a max-times tropical polynomial is the function
\begin{equation*}
\tropx p(x) : = \bigoplus_{j=0}^d  b_j\otimes x^{\otimes j}
            =\max_{0\leq j\leq d} (b_j x^j),
\end{equation*}
with $x, b_{0}, \dots, b_d \in \R^{+}$ and again the tropical roots $\alpha$
are the points of non-differentiability of the function above; this
time the multiplicity is the jump of the derivative of the logarithm
of the polynomial at the root. The tropical roots of a max-times
tropical polynomial with coefficients $b_0,\dots,b_d$ are the
exponentials of the tropical roots of the max-plus tropical
polynomials with coefficients $\log b_0,\dots,\log b_d$, including
by convention $-\infty=:\log 0$, and have the same multiplicities:
this follows immediately if one observes that for any
$x \in \R^+$ it holds
\[ \max_{0\leq j\leq d} (\log b_j +j \log x) = \log \max_{0\leq j\leq
    d} (b_j \cdot x^j). \]

A tropical version of the fundamental theorem of algebra holds,
implying that a tropical polynomial of degree $d$ has $d$ tropical
roots counting multiplicities~\cite{cume:80}. Using the Newton polygon, the computation of the
tropical roots can be recast as a geometric problem which amounts to computing the upper convex hull of the
points $(j, \log b_j)$ (see, for example, the introduction of
\cite{sh:11}). This can be done in $\mathcal{O}(d)$ operations
using the Graham scan algorithm, since the points are already sorted
by their abscissa (see Section~\ref{sec:newton-polygon})~\cite{gr:72}.

We are concerned with a generalization of tropical polynomials 
that can be helpful in studying analytic functions, similarly to
the role played by tropical polynomials in locating the roots (or 
eigenvalues) of (possibly matrix-valued) functions.

\begin{definition}[Tropical Laurent series]
  \label{def:tropicalLaurentseries}
  Let $(b_j)_{j \in \Z}$ be a sequence of elements of $\R^+$, indexed
  by integers and not all zero. The associated max-times tropical Laurent series
  is
        \[
          \tropx f(x) :=  \sup_{j \in \mathbb Z} (b_j x^j), 
	\]
        with $x \in \R^+$ and it takes values in $\R^+\cup
        \{+\infty\}$. Similarly, the associated max-plus tropical Laurent series
        is a function of variable $x \in \R \cup \{-\infty\}$
        \[
	  \trop f(x) := \sup_{j \in \mathbb Z} (\log b_j + jx), 
	\]
	taking values in $\R \cup \{\pm \infty \}$. In addition, let
        $f(\lambda) = \sum_{j\in\Z}b_{j}\lambda^{j}$ be a complex
        function defined by a Laurent series. Then
        $\tropx f(x) = \sup_{j \in \mathbb Z} (\abs{b_j} x^j)$ is the
        \emph{max-times tropicalization} of $f(\lambda)$, while
        $ \trop f(x) := \sup_{j \in \mathbb Z} (\log |b_j| + jx)$ is
        its \emph{max-plus tropicalization}.
\end{definition}

\begin{remark}
  There are two obvious differences in Definition \ref{def:tropicalLaurentseries}
  with respect to the polynomial case. The first is the use of supremum in lieu
  of maximum; this is necessary because there might be points $x$
  where the value of $\tropx f(x)$ is not attained by any of the
  polynomial function $b_jx^j$.  In addition, note that $\tropx
  f(x)$ ($\trop f(x)$, respectively) may not be well-defined on
  the whole $\rmaxm$ ($\rmax$, respectively). The next sections are
  going to clarify these aspects.
\end{remark}

The next lemma is a well-known result of convex analysis~\cite[Theorem~5.5]{ro:70}. We decided
to recall it to make this section self-contained.

\begin{lemma}\label{lem:convex}
  Let $g_j(x)$ be a set of real-valued functions all defined and
  convex on the same interval $\Omega \subseteq \R$, and indexed over
  some non-empty set $I$, possibly infinite or even uncountable. Then,
  the largest domain of definition of the function
\[ g : D \rightarrow \mathbb R, \qquad x \mapsto g(x) = \sup_{j \in I} g_j(x) \]
is an interval $D \subseteq \Omega \subseteq \R$; moreover, $g(x)$ is convex on $D$.
\end{lemma}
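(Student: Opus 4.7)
The plan is to extract both conclusions from the same short computation, namely from the pointwise convexity inequality applied to every $g_j$ and then passed to the supremum.

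First I would take two points $x,y\in D$ (i.e.\ points where $g(x)$ and $g(y)$ are finite) and any $t\in[0,1]$. Since each $g_j$ is convex on $\Omega$ and $tx+(1-t)y\in\Omega$ (because $\Omega$ is an interval), one has
\[
g_j(tx+(1-t)y)\le t\,g_j(x)+(1-t)\,g_j(y)\le t\,g(x)+(1-t)\,g(y)
\]
for every $j\in I$. Taking the supremum over $j$ on the left yields
\[
g(tx+(1-t)y)\le t\,g(x)+(1-t)\,g(y)<+\infty.
\]
This single inequality does all the work: the right-hand side is finite, so $tx+(1-t)y\in D$, which forces $D$ to be convex, and any convex subset of $\R$ is an interval; moreover, the very same inequality is the definition of convexity of $g$ on $D$.

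I do not anticipate any real obstacle. The only subtlety worth flagging is that "largest domain of definition" has to be read as the set of $x\in\Omega$ for which the supremum is finite (otherwise the statement is trivial or ill-posed); under this convention the argument above is complete, and in particular one does not need to invoke Rockafellar's theorem in full generality, since convexity is a pointwise property and can be verified directly from the convexity of each $g_j$.
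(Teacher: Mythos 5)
Your proof is correct. Note, though, that the paper itself does not prove this lemma: it simply cites Rockafellar \cite[Theorem~5.5]{ro:70} and states the result to keep the section self-contained. So you are not re-deriving the paper's argument so much as supplying the elementary proof that the paper leaves implicit. Your computation --- apply the convexity inequality to each $g_j$, bound by $t\,g(x)+(1-t)\,g(y)$, take the supremum over $j$ --- is the standard one, and it simultaneously shows that $D=\{x\in\Omega : g(x)<\infty\}$ is convex (hence an interval) and that $g$ is convex there, exactly as you say. Your closing remark on the reading of ``largest domain of definition'' is the right one and is indeed the convention the paper uses (the small typo in the paper writing $g:D\to\Omega$ rather than $g:D\to\R$ notwithstanding). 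No gap; this is a clean, more self-contained substitute for the citation.
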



\begin{definition} \label{def:I}
  Given a sequence of elements in $\mathbb R^+$ 
  $\{ b_j \}_{\mathbb Z}$, or equivalently a Laurent series 
  with non-negative coefficients $\sum_{j \in \mathbb Z} b_j x^j$, 
  we denote by $I:=\{ j \in \Z:
  b_j > 0 \}$ the set of indices corresponding to the nonzero 
  elements. 
\end{definition}

\begin{proposition}\label{prop:convex}
Let $(b_j)_{j \in \Z}$ be a sequence of elements of $\R^+$,
indexed by integers and not all zero. Let $I$ be its nonzero 
elements as in Definition~\ref{def:I}.
 Then the following statements are true:
\begin{enumerate}
\item The domain of the function
  \[
    \tropx f: D \rightarrow \R^+, \qquad x \mapsto
    \tropx f(x) = \sup_{j \in I} b_j  x^j
  \] is an interval $D
\subseteq \R^+$; moreover, $\tropx f(x)$ is convex on $D$.

\item The domain of the function $\trop f$, restricted to $\mathbb R$, 
  \[
    \trop f_{|\mathbb R}: D_{+} \rightarrow \R, \qquad x \mapsto \trop f(x) = \sup_{j
      \in I} (\log b_j + j x)
  \] is an interval $D_{+} \subseteq \R$; moreover, 
  $\trop f_{|\mathbb R}(x)$ is convex on $D_{+}$.
\end{enumerate}
\end{proposition}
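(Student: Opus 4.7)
The plan is to deduce both statements directly from Lemma \ref{lem:convex}, the only real work being to verify convexity of each summand in the supremum on a suitable common interval $\Omega$.

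For part (2), each function $g_j(x) = \log b_j + jx$ with $j \in I$ is affine on $\R$, hence convex on $\R$. Applying Lemma \ref{lem:convex} with $\Omega = \R$ immediately yields that $D_+$ is an interval in $\R$ and that $\trop f = \sup_{j \in I} g_j$ is convex on $D_+$.

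For part (1), I first check that each $h_j(x) = b_j x^j$ (with $b_j > 0$ and $j \in \Z$) is convex on $(0, \infty)$. A direct computation gives $h_j''(x) = j(j-1) b_j x^{j-2}$, and the quadratic $t^2 - t$ is negative only on the open interval $(0,1)$, so $j(j-1) \geq 0$ for every integer $j$. Together with $x^{j-2} > 0$ and $b_j > 0$, this yields $h_j'' \geq 0$, so each $h_j$ is convex on $(0, \infty)$. Lemma \ref{lem:convex} with $\Omega = (0, \infty)$ then implies that $D \cap (0, \infty)$ is an interval and $\tropx f$ is convex on it.

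The one point requiring care is the boundary $x = 0$. If $I$ contains a negative integer then $b_j x^j \to +\infty$ as $x \to 0^+$, so $0 \notin D$ and $D$ is already an interval inside $(0, \infty)$. If instead $I \subseteq \N$, then $\tropx f(0)$ equals $b_0$ when $0 \in I$ and $0$ otherwise, so $0 \in D$ and the interval merely extends to include this endpoint; convexity on the resulting half-open or closed interval follows from convexity on $(0, \beta]$ together with a one-sided limit. This endpoint bookkeeping is the only mild obstacle; all the genuine analytic content is packaged inside Lemma \ref{lem:convex}, so no deeper idea is needed.
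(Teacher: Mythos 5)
Your proof is correct and follows the paper's own route: establish convexity of each summand and invoke Lemma~\ref{lem:convex}. You are somewhat more careful than the paper about the interval on which the summands are defined (the paper asserts convexity of $b_j x^j$ on $\R^+$ without remarking that for $j<0$ the term is undefined at $x=0$, and does not discuss the endpoint), but your second-derivative check and the case split on whether $I$ contains a negative index are just a more explicit execution of the same idea; note also that when $I\subseteq\N$ you could apply Lemma~\ref{lem:convex} directly with $\Omega=[0,\infty)$ and skip the endpoint patching entirely.
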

\begin{proof}
This easily follows from Lemma~\ref{lem:convex}. Indeed:
\begin{enumerate}
\item For any $j \in I$, $b_j x^j$ is convex on $\R^+$ (note $j \in I \Rightarrow b_j > 0)$.
\item For any $j \in I$, $\log b_j \in \R$. Hence, $\log b_j + j x$ is
  convex, on $\R$.
\end{enumerate}
\end{proof}

Proposition \ref{prop:convex} does not specify the nature of the interval
$D$, which can be open, closed, or semiopen (either side being
open/closed): indeed, examples can be constructed for all four
cases. It can also happen that $D = \R^+$, that $D$ is empty, or that
$D$ is a single point. In addition, if $\tropx f(x)$ and $\trop f(x)$ are, resp., the max-times and the max-plus tropicalization of the same (classical) function $f(x)$, then $D = \interval{a}{b}$ is the
domain of $\tropx f(x)$ if and only if $D_+ := \interval{\log a}{\log b}$ is
the domain of $\trop f(x)$. (Similar statements hold when $D$ is not closed but open or semiopen.)

An immediate consequence of Proposition~\ref{prop:convex} is that
\[ 
  Y:=\{ y \in D^{\mathrm{o}} : \tropx f(x) \text{ is not
    differentiable at } x=y \} \Rightarrow \# Y \leq \aleph_0. 
\]
where $D^{\mathrm{o}}$ is the interior of $D$. In
particular, $\tropx f(x)$ (as any convex function of a real variable)
has left and right derivative everywhere in its domain of definition
$D$ and it is differentiable almost everywhere in the interior of $D$,
except for at most countably many points at which the left and right
derivative differ.  On the other hand, if $\# I \geq 2$ then it is
clear that $Y$ may be not empty. Indeed, we will define the set of
tropical roots of $\tropx f(x)$ as
$Y \cup (D \setminus D^{\mathrm{o}})$, together (possibly) with $\{ 0 \}$. Via the logarithmic map, an analogous definition holds for the tropical roots of 
$\trop f(x)$.

\begin{definition}[Tropical roots of Laurent series]
\label{def:tropicalroot}
Consider a max-plus Laurent series
	\[
	  \trop f_{| \mathbb R}(x) := \sup_{j \in \Z} (\log b_j + jx), 
	\]
as a real valued function defined on some interval $D_{+}$.
A point $\log \alpha \in D_{+}$ 
is said to be a \emph{tropical root} of $\trop f(x)$ if:
\begin{enumerate}

\item either $\trop f_{| \mathbb R}(x)$ is not differentiable at $\log \alpha\in \R$, and in
  this case the multiplicity of $\log \alpha$ is the size of the jump of
  the derivative at $\log \alpha$, i.e.,  
	$m := \frac{d}{dx^+} \trop f(\log \alpha) - \frac{d}{dx^-}
        \trop f(\log \alpha)$;

      \item or $\log \alpha = -\infty$ if $b_{j} = 0$ for $j\leq 0$
        and in this case the
  multiplicity is given by $\inf\{j\mid b_j \neq 0 \}$;

\item or $\log \alpha \in \R$ is a \emph{finite} endpoint of $D_{+}$, and in this case the
  multiplicity of $\log \alpha$ is $m=\infty$. 
\end{enumerate}
Similarly  consider $  \tropx f(x) :=  \sup_{j \in \Z} (b_j x^j)$ 
as a real valued function defined on some interval $D$. Then, $\alpha
\in D$ is a tropical root of $\tropx f(x)$ with multiplicity $m$ if
$\log \alpha$ is a tropical root of $\sup_{j \in \Z} (\log b_j + j x)$
with multiplicity $m$. 
\end{definition}



As expected, Definition~\ref{def:tropicalroot} falls back to
the polynomial case when $\tropx f(x)$ is a polynomial: if
$\tropx f(x)$ is a polynomial, then
$D = \interval[open right]{0}{\infty}$, hence the second subcase falls
back to $0$ being a root of $\tropx f(x)$, while the third one never
happens. In addition note that the the multiplicity of a tropical root
may be infinite. This happens if $\alpha$ is nonzero, is an endpoint of $D$, and
belongs to $D$, that is, if $\tropx f(\alpha) \in \R$ but either
$\tropx f(x) = +\infty$ for $x>\alpha$ or $\tropx f(x)=+\infty$ for
$x<\alpha$. Analogous observations can be made for $\trop f(x)$.

Before continuing to expose our theory, we give some examples to gain further insight on the different cases in the definition of tropical roots.



\begin{example} \label{ex:1}
Consider $f(\lambda) = -\log(1-\lambda)$ and its Taylor expansion in $[-1,1[$. 
Then
\[
  f(\lambda)= \sum_{j=1}^{\infty}\frac{\lambda^j}{j} \Rightarrow \tropx
  f(x)=\sup_{j \geq 1} \left(\frac{x^j}{j}\right) =
  \begin{cases} 
x \ &\mathrm{if} \ x \leq 1;\\
\infty \ &\mathrm{if} \ x > 1.
\end{cases}
\]
The domain of $\tropx f(x)$ is $D=\interval{0}{1}$. The right endpoint
$\alpha_1 = 1 \in D$ is a tropical root of infinite multiplicity. The
point $\alpha_{0} = 0$ is a simple root, because $b_j = 0$ for $j\leq
0$.  They are the only tropical
roots because $\tropx f(x)$ is differentiable everywhere else in $D$.
If we had considered $g(\lambda) = 1-\log(1-\lambda)$, then $\tropx g(x)$ would not
have had $\alpha_{0} = 0$ as a tropical root, even though
$D=\interval{0}{1}$: in fact, the special condition for $0$ to be a tropical root would not be satisfied, as for an endpoint $a$ to
be a root $\log a$ needs to be finite.
\end{example}

\begin{example}
  \label{ex:2}
Let $H_j = \sum_{k=1}^jk^{-1}$ denote the $j$-th harmonic number and consider 
\begin{equation}
  \label{eq:ex2}
  f(\lambda) = \sum_{j=1}^\infty \eu^{H_j} \lambda^j \Rightarrow \tropx f(x) =
  \sup_{j \geq 1} (\eu^{H_j}x^j).
\end{equation}
The domain of $\tropx f(x)$ is $D=\interval[open right]{0}{1}$. As in
the previous example, $\alpha_{0} = 0$ is a root with multiplicity $1$
because $b_j = 0$ for $j \leq 0$. The
points of nondifferentiability are
\[
  \alpha_j  = \eu^{-1/j}, \qquad j=1,2,3,\dots
\]
which  have all multiplicity $1$, and accumulate at $1$. In
Figure~\ref{fig:harmon} we plotted $\tropx f(x)$ and $\alpha_{j}$. Note that
$\tropx f(x) = + \infty$ if and only if $x \geq 1$. However, $1
\not \in D$, so $1$ itself is not a tropical root.
\begin{figure}
  \centering
  \begin{subfigure}{0.49\linewidth}
    \centering\includegraphics[width=0.95\textwidth]{./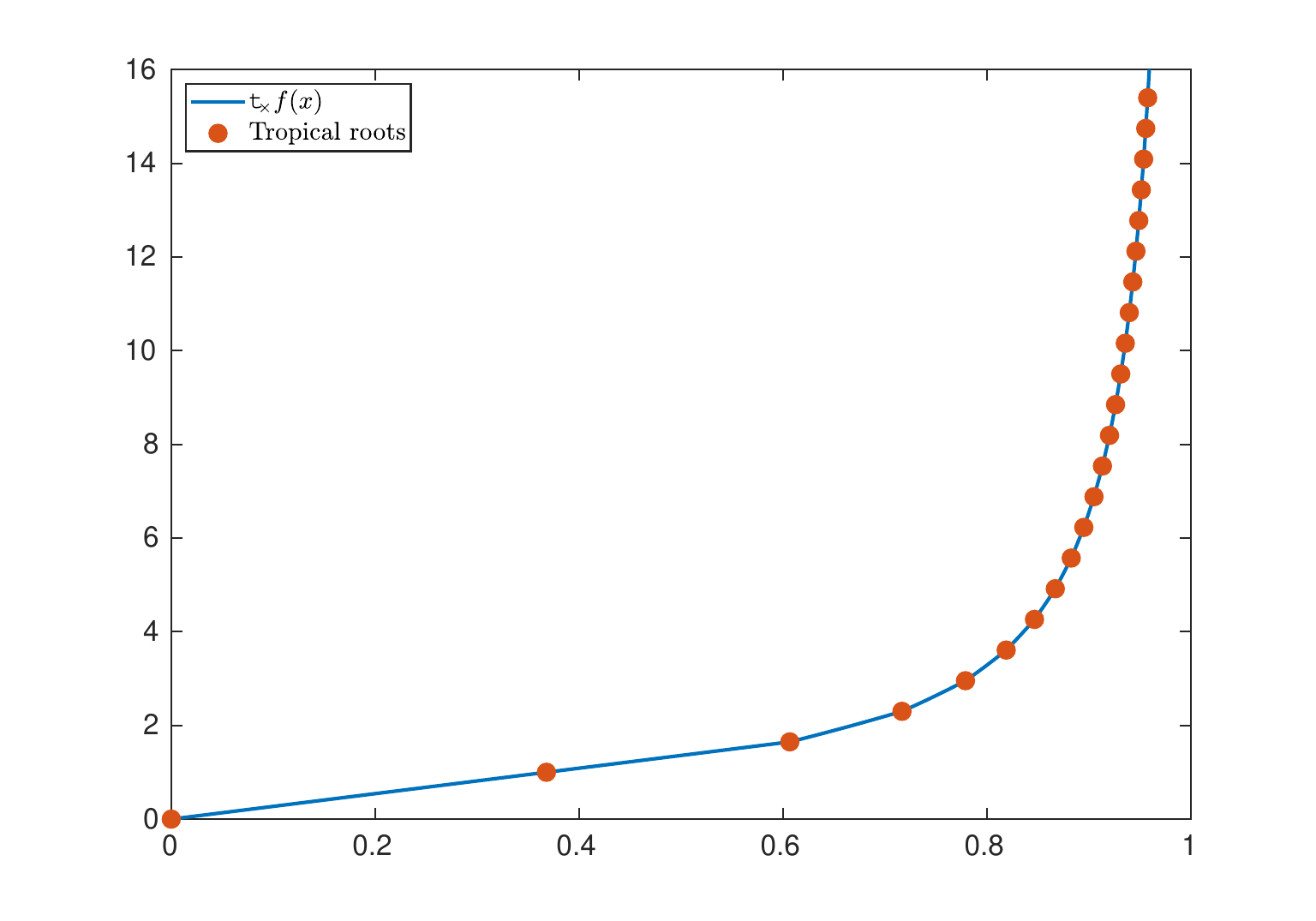}
    \caption{}
    \label{fig:harmona}
  \end{subfigure}~
  \begin{subfigure}{0.49\linewidth}
    \centering\includegraphics[width=0.95\textwidth]{./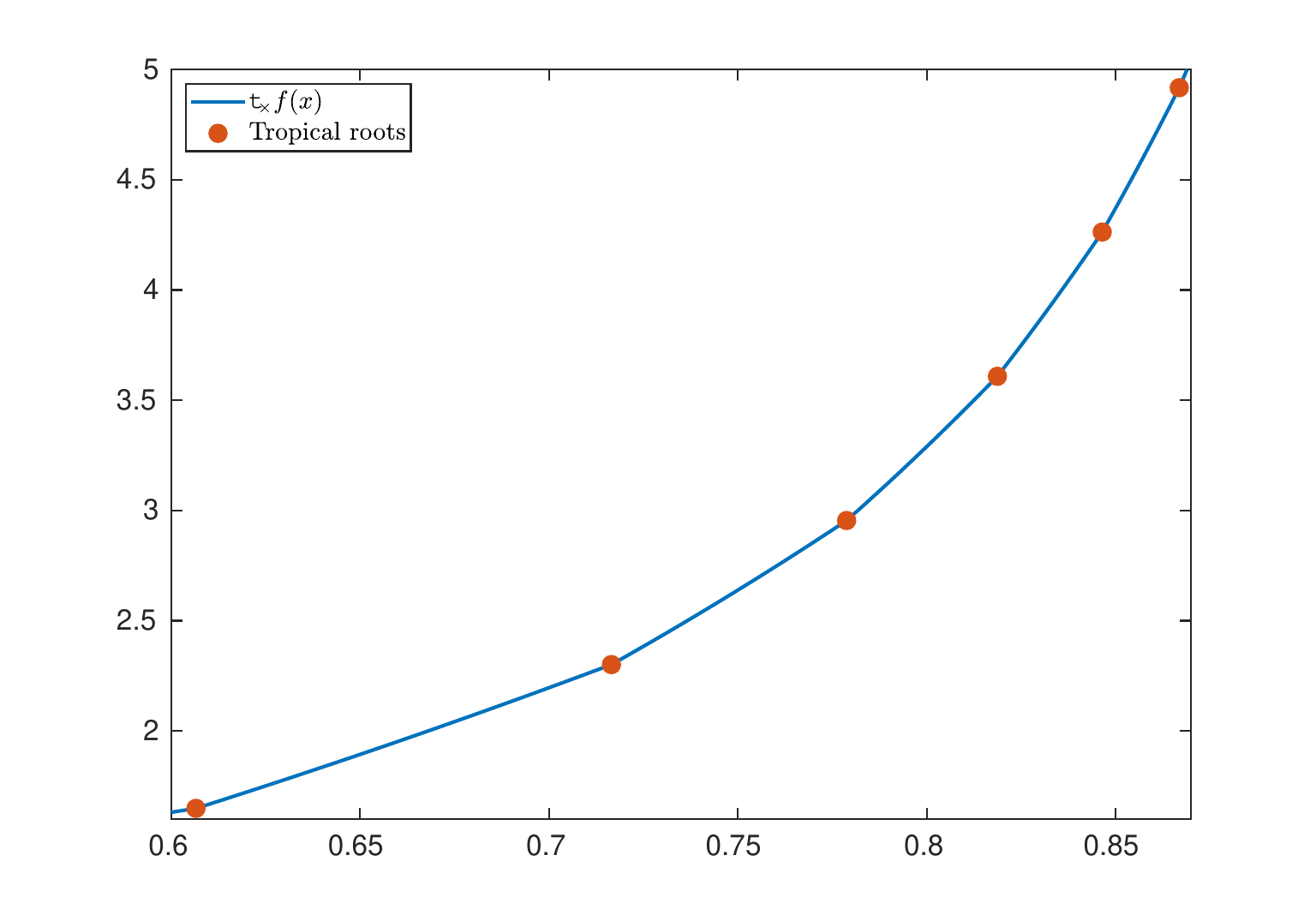}
    \caption{}
    \label{fig:harmonZoom}
  \end{subfigure}
  \caption{The  plot of $\tropx f(x)$ defined in~\eqref{eq:ex2} and its
    tropical roots $\alpha_{j}$ in~\ref{fig:harmona}, and a zoom
    in~\ref{fig:harmonZoom}, where we can  see that the nonzero
    $\alpha_{j}$ are the points of nondifferentiability.}
\label{fig:harmon}
\end{figure}

\end{example}

\begin{example}
Let
\[
 f(\lambda) = \sum_{j=0}^\infty \eu^{j^2} \lambda^j \Rightarrow \tropx f(x) =
 \sup_{j \geq 0} (\eu^{j^2}x^j) \equiv + \infty.
\] 
In this case, the domain of $\tropx f(x)$ as a real function is empty,
and hence there are no tropical roots.
\end{example}

We now report an example of a tropical series 
$\tropx f(x)$ that has tropical roots accumulating on the 
right boundary, and also a largest tropical root. 

\begin{example} \label{ex:infty}
  Let $b_j := \prod_{k = 1}^j (e^{2^{-k} - 1})$, and 
  \[
    f(\lambda) = \sum_{j = 0}^{\infty} {b_j} \lambda^j
    \Rightarrow
    \tropx f(x) = \sup_{j \geq 0} b_j x^j.
  \]
  The domain of $\tropx f(x)$ is $D = [0, e]$. The right endpoint 
  $e$ is therefore a tropical root (while $0$ is not); 
  by a direct computation we see that the points of 
  non-differentiability are $\alpha_j = \exp(1 - 2^{-j})$ for 
  $j = 1, 2, 3, \ldots$. The $\alpha_j$ have $e$ as accumulation point.
\end{example}

\begin{remark}
From now on we will assume that $0$ is never a tropical root
of $\tropx f(x)$. On one hand, this is not a restriction, because if
$0$ is a root of multiplicity $m$, then it means $b_{j} = 0$ for $j
< m$ and thus we can consider a shifted version of $\tropx
f(x)$. On the other, it greatly simplifies the exposition, because in
this case $\alpha$ is a tropical root if it is a nondifferentiable
point or a closed endpoint of $D$.
\end{remark}

As a practical example, the function of Example~\ref{ex:1} can be modified 
to remove the zero root dividing it by $\lambda$, and thus yielding
\[
  \hat f(\lambda) = \sum_{j = 0}^{\infty} \frac{\lambda^j}{j + 1}. 
\]
For this modified function, the domain of $\tropx \hat f(x)$ is still $D = [0, 1]$. However, 
in this case there is no longer a tropical root at zero, but the only tropical 
root is $1$, still with infinite multiplicity. 

Before discussing the connection between tropical Laurent series
$\tropx f(x)$ and the roots of the related function $f(\lambda)$, we shall
see how the relation between tropical roots and the associated Newton
polygon extends from the polynomial setting to the Laurent series
case. In addition, in Example~\ref{ex:2} and \ref{ex:infty} 
we have witnessed that the
non differentiability points converge to the right
endpoint of $D$. 
This does not happen by chance, and it
will be the second topic of the next section.

Since once again it is clear that one can map a max-times Laurent
series to a max-plus Laurent series by taking the logarithm of the
coefficients (and the roots are also mapped under the logarithm), we
will for convenience focus on max-times Laurent series to analyse the
connection with the Newton polygon.

\subsection{Asymptotic behaviour of tropical roots and infinite Newton
  polygons}
\label{sec:newton-polygon}

The goals of this subsection are
multiple. First, we show that all the tropical roots are
isolated, except for two possible accumulation points. Then we prove a
result that mirrors Definition~\ref{def:tropicalroot}:
$\tropx f(x) \in \R^{+}$ if
$x\in \interval[open]{\alpha_{-\infty}}{\alpha_{+\infty}}$, where $\alpha_{\pm\infty}$ will be defined in Definition \ref{def:ainfty}.  Finally, we define
the Newton polygon, which can be used to compute the tropical roots
with their multiplicities. Even though it is a standard tool for the
roots of tropical polynomials, it requires few more technicalities in
the Laurent case: some properties are less obvious because we are
dealing with a  possibly infinite, albeit countable, number of roots.  


We begin with a preliminary lemma: even though the tropical roots may
be countably infinite, we show that they are isolated, except possibly
for two accumulation points.

\begin{lemma}
\label{lem:isolated}
  Let $\tropx f(x)$ be a max-times Laurent series with 
  non-differentiable points 
  $(\alpha_{k})_{k\in T'}$ indexed by $T' \subseteq \mathbb Z$. 
  Then all the 
  $\alpha_{k}$ are isolated with the only possible accumulation points
  being $\sup_{k \in T'} \alpha_k$ and $\inf_{k \in T'} \alpha_k$. 
  In particular, without loss of generality we can
  assume that the $\alpha_k$ are sorted in a 
  non-decreasing way, i.e., that if $j > k$ then 
  $\alpha_j > \alpha_k$. 
\end{lemma}
\begin{proof}
 Observe that we can write:
\begin{equation}
\label{eq:isolated}
	  \tropx f(x) := \sup_{j \geq 0} g_j(x), \qquad 
	  g_j(x) := \max_{\substack{k \in \mathbb Z \\
                \abs{k} \leq j}} (b_kx^k).
\end{equation}
        In other words, $\tropx f(x)$ can be approximated from below by the
functions $g_j(x)$. 
Therefore we
consider the non-differentiable points of $g_j(x)$ and see how they
change as we let $j\to \infty$.

First, note that $g_0(x)$ is the constant function $g_0(x) \equiv b_0$. In
general, the function $g_{j}(x)$ is obtained by $g_{j-1}(x)$ adding at most two
more monomial functions to the set whose supremum is taken; more precisely, one
function is added if and only if $b_j\neq 0$ and the other if and only if
$b_{-j}\neq 0$. The functions to be added have a larger exponent (in
absolute value) than all the others considered before. We claim that this automatically
introduces at most two new non-differentiable points, which will become the
leftmost and the rightmost ones. Hence, there are at most two accumulation points,
that (if present) are indeed $\sup_{k\to\pm \infty}\alpha_k$
and $\inf_{k\to\pm \infty}\alpha_k$, proving the statement.

It remains to prove the claim. For the sake of simplicity, we consider only
what happens at the right hand side of our domain: the proof for the left hand
side is analogous. There are two cases: either the new non-differentiable point,
say $\alpha$, is already the rightmost one, or it superposes or lies on the left
of one or more previous non-differentiable points. In the first scenario, a new
non-differentiable point $\alpha$ is created; in the latter, every 
non-differentiable point larger
than $\alpha$ disappears, and $\alpha$ becomes the rightmost one. In
Figure~\ref{fig:isolated} we plotted these two cases while building $g_{2}(x)$:
in~\ref{fig:isolated1} the new point is already the rightmost one, while
in~\ref{fig:isolated2} it superposes with the previous point.
\end{proof}
\begin{figure}

  \centering
  \begin{subfigure}{0.49\linewidth}
    \centering\includegraphics[width=0.95\textwidth]{./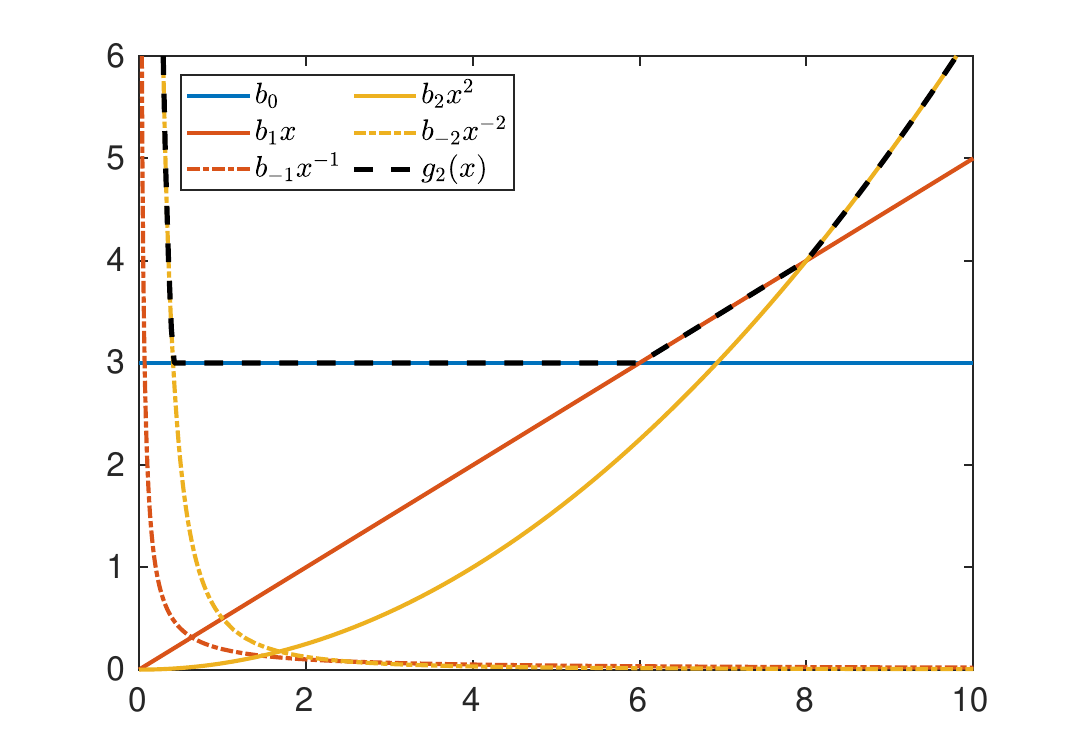}
    \caption{}
    \label{fig:isolated1}
  \end{subfigure}~
  \begin{subfigure}{0.49\linewidth}
    \centering\includegraphics[width=0.95\textwidth]{./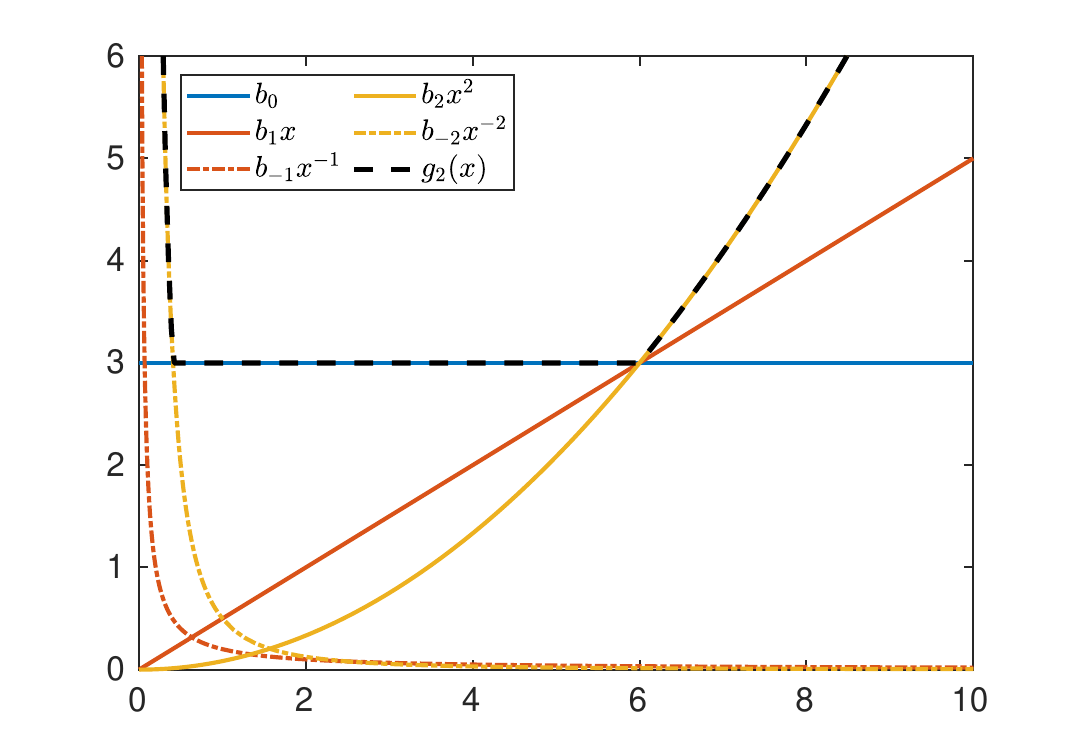}
    \caption{ }
    \label{fig:isolated2}
  \end{subfigure}
  \caption{Examples of possible scenarios for $g_{2}(x)$ (black dashed
    line). In~\ref{fig:isolated1} the new non differentiable point is the rightmost one,
    hence it becomes a distinct tropical root with multiplicity
    one. In~\ref{fig:isolated2} it superposes with a
      previous one, hence the multiplicity of the rightmost tropical
      root is larger than one.}
\label{fig:isolated}
\end{figure}
We have now proved that every tropical root $\alpha_{k}$ of $\tropx f(x)$
is either isolated or an endpoint of the domain.  
In addition, 
Lemma~\ref{lem:isolated}
guarantees that the roots can be indexed as $(\alpha_k)_{k \in T}$
with $T \subseteq \mathbb Z \cup \{ - \infty, +\infty \}$, and 
$T' \subseteq T$. Here, we make the choice that 
the indices $-\infty$ (resp $+\infty$) is used if and only if
the left (resp. right) endpoint  
is both accumulation point and a tropical roots. With this choice, 
we can assume, without loss of generality, that the roots
are sorted 
and the inherited topology from $\mathbb R$ 
is respected. The
nature of $T$ may vary considerably. It may be either bounded or unbounded
above, and either bounded or unbounded below, depending on whether the
sequence of tropical roots is bi-infinite, infinite only on the left,
infinite only on the right, or finite. For example, if $\tropx f(x)$ is a
polynomial, then $T$ is clearly finite. The next proposition proves
that if $\tropx f(x)$ has a finite number of roots, 
then the cardinality of $I$ (the set of indices corresponding 
to nonzero coefficients) is finite. 


\begin{proposition}
\label{prop:nRoots}
Let $\tropx f(x) = \sup_{j\in \Z} b_{j}x^{j}$  be a tropical Laurent
series. Let $I := \{j \in \Z : b_{j} > 0\}$ and let $(\alpha_k)_{k\in
  T}$ be tropical roots, with $T\subset \Z$. Then the set of indices
$I$ is bounded above if and only if $T$ bounded above and the domain 
$D$ of $\tropx f(x)$ is not
closed on the right. Similarly, $I$
is bounded below if and only if $T$ is bounded below and $D$ is not
closed on the left, or the left endpoint of $D$ is zero and $b_0 \neq 0$. 
\end{proposition}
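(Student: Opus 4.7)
The plan is to work with the max-plus tropicalization $\trop f(y)=\sup_{j\in\Z}(\log b_j+jy)$ obtained from $\tropx f$ via $y=\log x$: the sets $I$, $T$, and (the logarithmic image $D_+$ of) $D$ all transport faithfully, and so does the property ``not closed on the right''. The reflection $y\mapsto -y$, which swaps $b_j$ with $b_{-j}$, $I$ with $-I$, left with right endpoints of $D_+$, and the ``above'' with the ``below'' sides of $T$, then lets me reduce to proving just the first biconditional.

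For the forward implication, let $d=\max I<\infty$. If $D_+$ is empty both conclusions are vacuous, so I may pick any $y_0\in D_+$ and use $\log b_j\leq \trop f(y_0)-jy_0$ for every $j\in I$. For $y>y_0$ and $j\in I$ this gives $\log b_j+jy\leq \trop f(y_0)+j(y-y_0)\leq \trop f(y_0)+d(y-y_0)$, using $j\leq d$ and $y-y_0>0$. Taking the supremum yields $\trop f(y)<+\infty$, so $(y_0,+\infty)\subseteq D_+$ and $D_+$ is unbounded above, \emph{a fortiori} not closed on the right. A sharpening of the same estimate, restricted to $j<d$, shows that $\log b_d+dy$ strictly dominates every $\log b_j+jy$ with $j<d$ for $y$ large enough; hence $\trop f$ agrees with the affine function $\log b_d+dy$ on a half-line and has no tropical roots there, so $T$ is bounded above.

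For the converse, assume $T$ is bounded above, $D_+$ is not closed on the right, and (for contradiction) that $I$ is unbounded above. Set $\beta_+:=\sup D_+$. Since $T$ is bounded above and $\beta_+$ cannot itself be a tropical root (else case (3) of Definition~\ref{def:tropicalroot} would make $D_+$ closed on the right), there exists $y_0<\beta_+$ such that $\trop f$ has no tropical roots in $(y_0,\beta_+)\subseteq D_+$; on this open interval $\trop f$ is convex (Proposition~\ref{prop:convex}) and differentiable. The crux is to show that $\trop f$ is in fact \emph{affine} there, with integer slope $m\in I$: at any interior point $y\in D_+$ the supremum defining $\trop f(y)$ is attained by some $j\in I$ (otherwise a maximising sequence $j_n\to\pm\infty$ would force $\trop f(y')=+\infty$ at a nearby $y'\in D_+$, contradicting interiority), so at a differentiable point the derivative must equal such an integer $j$; since a convex function differentiable on an open interval has continuous derivative there, an integer-valued continuous function on $(y_0,\beta_+)$ is constant, say $\equiv m$. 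Writing $\trop f(y)=\log b_m+my$, every $j\in I$ with $j>m$ satisfies $\log b_j\leq \log b_m+(m-j)y$ on $(y_0,\beta_+)$. If $\beta_+=+\infty$, letting $y\to+\infty$ forces $b_j=0$, contradicting $j\in I$; if $\beta_+<\infty$, letting $y\to\beta_+^-$ forces $\trop f(\beta_+)\leq \log b_m+m\beta_+<+\infty$, so $\beta_+\in D_+$, contradicting the hypothesis. Either way $I$ must be bounded above.

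The step I expect to be the main obstacle is the ``integer slope plus continuous derivative implies affine'' argument in the converse. The integrality of the slope requires the attained-supremum argument, which uses interiority to exclude maximising sequences of indices drifting to $\pm\infty$; only after this can the convex-analytic fact that a convex function differentiable on an open interval has continuous derivative be invoked to pin down the behaviour of $\trop f$ on the entire tail. This is also where the Laurent setting deviates from the polynomial one, in which the corresponding statement is immediate from the finiteness of the Newton polygon.
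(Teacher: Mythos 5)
Your proof is correct and follows essentially the same strategy as the paper's: once past the rightmost tropical root the tropicalization must coincide with a single monomial (equivalently, $\trop f$ is affine with some integer slope $m \in I$), and this forces either $b_j = 0$ for $j > m$ (when $\sup D_+ = +\infty$) or $\sup D_+ \in D_+$ (when $\sup D_+ < \infty$), contradicting the hypotheses in the converse. The difference is one of rigor rather than route. The paper asserts the affine behavior almost without comment (``by definition of tropical root, for every $x > \alpha_p$ it holds $b_{j_p}x^{j_p} \geq b_j x^j$''), while you flesh it out via convex analysis: the supremum is attained at interior points of $D_+$ (otherwise a maximizing sequence of indices drifting to $\pm\infty$ would make $\trop f$ infinite at a nearby interior point), hence at any differentiable point the derivative equals the attaining integer index, and continuity of the derivative of a convex function on an open interval then pins the slope down to a single integer $m$. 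You also explicitly separate the subcases $\beta_+ = +\infty$ and $\beta_+ < +\infty$ in the converse; the paper's contradiction implicitly lets $x\to\infty$, tacitly assuming $D$ is unbounded on the right, which is not obviously available at this point in the development (it becomes so only via the later Theorem~\ref{thm:asymptotics}). These are genuine improvements. One small caveat that applies equally to both proofs: the case $D_+ = \emptyset$ (which can occur with $I$ bounded above if $\log b_j$ grows superlinearly as $j\to -\infty$) requires a convention that an empty domain counts as ``not closed on the right'' for the biconditional to hold; you wave at this as vacuous and the paper ignores it, so you are at least as careful as the source.
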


\begin{proof}
  We first prove the case when $I$ is bounded above. 
  If $I$ is bounded above, i.e., $b_j = 0$ definitely
  for $j$ large enough, then $T$ is bounded above and
  $\tropx f(x)\in \R^{+}$ for every $x$ large enough, because
\[
  \tropx f(x)= \sup_{\substack{j\in I \\ j>0}}b_{j}x^{j} =
  \max_{\substack{j\in I\\ j>0}} b_{j}x^{j}.
\]

Conversely, assume that $T$ is bounded above and that $D$ is not
closed on the right, i.e., there is no rightmost tropical root with infinite
multiplicity.  Let $\alpha_{p}$ be the rightmost tropical root and let
$j_p\in I$ be the index where the value $\tropx f(\alpha_p)$ is
attained for the last time. Then,
by definition of tropical root, for every $ x > \alpha_p $ it holds
\[
  b_{j_{p}}x^{j_{p}} \geq b_{j}x^{j},
\]
for every $j >0$, which implies that $\tropx f(x) \in \R^{+}$ for $x$
large enough, and therefore $D$ needs to be unbounded 
on the right. Equivalently, we can rewrite the previous equation as
    \begin{equation}
   \label{eq:contra1}
    x^{j-j_p} \leq \frac{b_{j_{p}}}{b_{j}}.
  \end{equation}
  Now assume that $I$ is not bounded above. Then there exist infinite
  values of $j$ such that $b_{j} > 0$ and $j > j_p$. This yields a
  contradiction, because $\lim_{x \to \infty}x^{j-j_p} = \infty$,
  but~\eqref{eq:contra1} implies this value is bounded above by a
  constant for every $x \geq \alpha_p$.

  The other statement follows by a similar reasoning, with the only 
  exception of zero which needs to be treated separately. If zero is in 
  the domain, then $-\infty$ is a tropical root (and therefore $T$ is 
  not bounded below), unless $b_0 \neq 0$. In the latter case, 
  zero is not a root and all the proof follows by the same argument as 
  above. 
\end{proof}

We are now ready to extend the definition 
$\alpha_{\pm \infty}$, covering any possible  $T$.  Note that if $T$ 
is bounded above then
$\sup T \in T$ and if $T$ is bounded below then $\inf T \in T$.

\begin{definition}
\label{def:ainfty}
  Given a sequence $(\alpha_k)_{k\in T}$ of tropical roots
  we define $\alpha_{\pm \infty}$ as elements of
  $\R^+ \cup \{+ \infty\}$ as follows:
  \[
    \alpha_{+ \infty} = \begin{cases}
      \sup_{k \in T} \alpha_k, & \text{if } \sup(T) = +\infty, \\ 
      +\infty & \text{otherwise}; \\
    \end{cases} \quad  
    \alpha_{- \infty} = \begin{cases}
      \inf_{k \in T} \alpha_k, & \text{if } \inf(T) = -\infty, \\ 
      0 & \text{otherwise}. \\
    \end{cases}
\] 
\end{definition}


By definition, if $\tropx f(x)$ is well defined as a real-valued function
on $D=\interval{a}{b}$, and $a >0$, then $a$ and $b$ are tropical roots of
infinite multiplicity. A priori, $\tropx f(x)$ may also have infinite
roots and it may happen, for example, that $\lim_{k\to+\infty}\alpha_k = b$: indeed,
Lemma~\ref{lem:isolated} does not exclude this scenario. 

Before proving the main result of this section, we state the
definition of the Newton polygon for Laurent series and show that the
usual result linking the tropical roots with the slopes of the Newton
polygon holds true for the isolated ones. To keep the exposition concise, we consider max-times tropical series: it is clear how to adapt the definition to the max-plus case via the exponential map.

\begin{definition}[Newton polygon] \label{def:newtpolygon}
  Let $\tropx {f}(x) = \sup_{j \in \mathbb Z} b_{j}x^{j}$ be a max-times tropical
  Laurent series. We define the Newton polygon $\newt{\tropx{f}}$
  as the upper convex hull of $\{(j, \log b_{j}) \mid b_j \neq 0 \}_{j \in \mathbb Z}$ 
  and we denote with
  \[
\newti{\tropx f} :=\{j \mid (j, \log b_{j}) \in \newt{\tropx{f}}\}
  \]
  the vertices of the Newton polygon for $\tropx f(z)$.
\end{definition}

\begin{remark}
  Note that the term ``polygon'' is used with a slight abuse of notation 
  in Definition~\ref{def:newtpolygon}, since the Newton polygon could 
  comprise an infinite number of vertices (and become what's sometimes 
  known as apeirogon). Nevertheless, we prefer to use this nomenclature 
  for consistency with the polynomial case. 
\end{remark}


\begin{lemma}[Newton Polygon for isolated roots]
\label{lem:newtonIsolated}
  Let $\tropx f(x)$ be a max-times tropical Laurent series, and $\newt{\tropx f}$
  its Newton polygon. Then for each segment
  connecting two vertices $(j_k, \log b_{j_k})$ and
  $(j_{k+1}, \log b_{j_{k+1}})$ with $j_k \neq j_{k+1}$ we have a tropical
  root $\alpha_k$ corresponding to a point of non-differentiability,
  with finite multiplicity $m_k$, where:
	\[
	  \alpha_k =  \left(\frac{b_{j_{k}}}{  b_{j_{k+1}}}\right)^{\frac{1}{j_{k+1} - j_k}}, \qquad 
	  m_k := j_{k+1} - j_k. 
	\]
        Equivalently, the isolated roots of $\tropx f(x)$ are the
        exponential of the opposite of the slopes of the corresponding segments.
\end{lemma}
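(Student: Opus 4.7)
The plan is to reduce to max-plus via the isomorphism $x \mapsto \log x$, which turns each monomial $b_j x^j$ into the affine function $L_j(x) := \log b_j + jx$ with slope $j$ and $y$-intercept $\log b_j$. In these coordinates, $\trop f(x) = \sup_{j \in I} L_j(x)$ with $I = \{j \in \Z : b_j > 0\}$, and $\newt{\trop f}$ is the upper convex hull of $\{(j, \log b_j)\}_{j \in I}$. The problem thus reduces to describing how the supremum of an at most countable family of affine functions switches from one maximizer to the next, a situation governed by the classical Legendre duality between a convex piecewise linear function and its supporting lines.

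First I would fix two consecutive vertices $(j_k, \log b_{j_k})$ and $(j_{k+1}, \log b_{j_{k+1}})$ of $\newt{\trop f}$ with (WLOG) $j_k < j_{k+1}$, and compute the unique point $x_k^*$ where $L_{j_k}(x) = L_{j_{k+1}}(x)$; a direct calculation gives $x_k^* = -s_k$, where $s_k$ denotes the slope of the segment between the two vertices. Exponentiating, I recover the candidate max-times root $\alpha_k = (b_{j_k}/b_{j_{k+1}})^{1/(j_{k+1}-j_k)}$ claimed by the lemma.

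Next I would argue that $L_j(x_k^*) \leq L_{j_k}(x_k^*)$ for every $j \in I$, that is, the point $(j, \log b_j)$ lies on or below the line of slope $s_k$ through $(j_k, \log b_{j_k})$. For $j_k \leq j \leq j_{k+1}$ this is immediate from the upper convex hull property. For $j$ outside this range I would use the fact that the slopes of consecutive edges of an upper convex hull are non-increasing in the abscissa; hence for $j > j_{k+1}$ the hull at abscissa $j$ sits no higher than $\log b_{j_k} + s_k(j - j_k)$, and a symmetric bound handles $j < j_k$. In particular, the supremum is finite and attained (at least) on the two indices $j_k, j_{k+1}$, so $x_k^* \in D_+$ and $\alpha_k \in D$. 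Because the two maximizing affine functions have different slopes, the right and left derivatives of $\trop f$ at $x_k^*$ are $j_{k+1}$ and $j_k$ respectively, and the jump $m_k = j_{k+1} - j_k$ is precisely the multiplicity prescribed by Definition~\ref{def:tropicalroot}. Transferring back to the max-times setting finishes the proof.

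The main obstacle is the convex-hull comparison above: in the polynomial case one has only finitely many indices to check and the argument is essentially tautological, but in the Laurent setting one must ensure that slope-monotonicity along the upper convex hull of an infinite set of points continues to suffice to dominate $L_j(x_k^*)$ uniformly in $j \in I$. This relies on the vertices of $\newt{\trop f}$ being visited in order of increasing abscissa with strictly decreasing slopes, a consequence of its being the \emph{upper} convex hull rather than some other polygonal construction.
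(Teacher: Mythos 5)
Your proof is correct but proceeds in the opposite direction from the paper's and is more explicit about the geometry. The paper's proof takes Lemma~\ref{lem:isolated} as input: starting from a point of non-differentiability $\alpha_k$, isolation guarantees that on a small neighbourhood $\tropx f$ equals $b_{j_k}x^{j_k}$ to the left and $b_{j_{k+1}}x^{j_{k+1}}$ to the right, and equating these at $\alpha_k$ yields the formula. Your proof instead goes from a segment of the Newton polygon to a root: you pass to max-plus, compute the intersection point $x_k^*$ of the two affine functions $L_{j_k}$ and $L_{j_{k+1}}$, and verify via slope-monotonicity of the upper convex hull that every other $L_j$ lies on or below these two at $x_k^*$, so $x_k^*\in D_+$ and the supremum there is attained exactly by the indices between $j_k$ and $j_{k+1}$ on the hull. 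This matches the direction of the lemma statement more faithfully and spells out the convex-hull comparison that the paper leaves implicit in its appeal to Lemma~\ref{lem:isolated}.

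One small point deserves tightening. To conclude that the one-sided derivatives are exactly $j_k$ and $j_{k+1}$ (and not merely that the subdifferential contains $[j_k,j_{k+1}]$, which would only give a lower bound on the jump), you need the dominance $L_j(x)\le L_{j_{k+1}}(x)$ not just at $x=x_k^*$ but on an entire right neighbourhood of $x_k^*$; this follows by applying the very same slope-monotonicity estimate at each $x$ in $(-s_k,-s_{k+1})$, where $s_{k+1}$ is the slope of the next hull edge (and mutatis mutandis on the left). This is the step the paper outsources to Lemma~\ref{lem:isolated}; your argument proves it directly, which is arguably cleaner since it does not require the full machinery of the earlier lemma.
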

\begin{proof}
By Lemma~\ref{lem:isolated}, all the tropical roots are isolated.
Hence, if $\alpha_k$ is a non-differentiable point for 
$\tropx f(x)$, then there exists $\epsilon > 0$ such that
	\[
	  \tropx f(x) = \begin{cases}
	    b_{j_k} x^{j_k} & \alpha_k - \epsilon \leq x \leq \alpha_k \\
	    b_{j_{k+1}}x^{j_{k+1}} & \alpha_k \leq x \leq \alpha_k + \epsilon \\
	  \end{cases}
	\]
for some $j_k \leq j_{k+1}$. The indices are determined by checking 
which functions (locally) realize the supremum 
$\trop f(x) = \sup_{k \in \mathbb Z} \log b_j + jx$; by definition of upper convex hull, they thus correspond to the vertices of the Newton polygon.
Indeed, the 
non-differentiability at $x = \alpha_k$ 
happens if
and only if
the segment connecting $(j_k, \log b_{j_k})$ and 
$(j_{k+1}, \log b_{j_{k+1}})$ is an edge of the Newton 
Polygon; this is equivalent to imposing that all points 
of the form $(j, \log b_j)$ for $j \not \in \{ j_k, j_{k+1} \}$
fall below the line extending the segment:
\[
    \log(b_j) < \log b_{j_k} + \frac{j - j_k}{j_{k+1} - j_k} \left(
        \log b_{j_{k+1}} - \log b_{j_k}
    \right), \qquad 
    \forall j \not \in \{ j_k, j_{k+1} \}. 
\]
Using $\log \alpha_k = -(\log b_{j_{k+1}} - \log b_{j_k}) / (j_{k+1} - j_k)$
we have the equivalent statement 
\[
    \log b_j - \log b_{j_k} + (j - j_k) \log \alpha_k < 0 
    \iff 
    b_j \alpha_k^j < b_{j_k} \alpha_k^{j_k}, 
\]
which is precisely the sought condition.

By equating the two expressions at
$\alpha_k$ we obtain
\[
  \alpha_k =  \left(\frac{b_{j_{k}}}{  b_{j_{k+1}}}\right)^{\frac{1}{j_{k+1} - j_k}}, \qquad 
  m_k = j_{k+1} - j_k, 
\]
where $m_k$ is the jump of the derivative of the max-plus 
version of the tropical polynomial. 

 \qedhere

\end{proof}

\begin{theorem}
  \label{thm:asymptotics}
	Let $\tropx f(x) = \sup_{j \in \Z} (b_jx^j)$ be a max-times
        Laurent series, and $(\alpha_k)_{k\in T}$ be the sequence of tropical
        roots. Let $\alpha_{\pm \infty}$ be defined as in Definition~\ref{def:ainfty}
	Then, 
	\[
          x \in \interval[open]{\alpha_{-\infty}}{\alpha_{+\infty}} \Rightarrow \tropx
          f(x) \in \R^+
        \]
	and 
	\[
	 x \not \in \interval{\alpha_{-\infty}}{ \alpha_{+\infty}}
         \Rightarrow \tropx f(x) = \infty. 
       \]
       Furthermore, $\tropx f(\alpha_{+\infty}) \in \R^+$ if and only if
       $\alpha_{+\infty}$ is a tropical root. 
      Respectively, $\tropx f(\alpha_{-\infty})  \in \R^+$, if and only if
      $\alpha_{-\infty}$ is a tropical root. 
\end{theorem}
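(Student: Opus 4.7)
The plan is to reduce to a one-sided analysis near $\alpha_{+\infty}$, since the analysis near $\alpha_{-\infty}$ is entirely symmetric. The main tool will be the Newton-polygon representation $\tropx f(x) = \sup_{k} b_{j_k} x^{j_k}$ over the vertex indices $j_k \in \newti{\tropx f}$ (valid because for $j_k \leq j \leq j_{k+1}$ the point $(j, \log b_j)$ lies on or below the chord, so the affine-in-$j$ function $\log b_j + j\log x$ is maximised at an endpoint), combined with the explicit identity from Lemma~\ref{lem:newtonIsolated} rewritten as
\begin{equation*}
b_{j_{k+1}} x^{j_{k+1}} = b_{j_k} x^{j_k} \cdot (x/\alpha_k)^{j_{k+1}-j_k}.
\end{equation*}
This recursion propagates comparisons between successive monomials along the Newton polygon and will drive the whole argument.

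Setting $p := \sup T$, I would first dispose of the subcases with $p < \infty$. If $\alpha_p$ has finite multiplicity, Proposition~\ref{prop:nRoots} gives that $I$ is bounded above; hence $\tropx f(x)$ is a sup of finitely many monomials and is finite for every $x \in \R^+$, while Definition~\ref{def:ainfty} sets $\alpha_{+\infty} = +\infty$, making the remaining right-sided claims vacuous. If instead $\alpha_p$ has infinite multiplicity, Definition~\ref{def:tropicalroot}(3) places $\alpha_p$ as a finite closed right endpoint of $D$, so $\tropx f(\alpha_p) \in \R^+$ whereas $\tropx f(x) = +\infty$ for $x > \alpha_p$; identifying $\alpha_{+\infty} = \alpha_p$ closes these subcases.

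The substantial case is $p = +\infty$. Put $\beta := \alpha_{+\infty} = \lim_k \alpha_k \in \R^+ \cup \{+\infty\}$, which exists by Lemma~\ref{lem:isolated}. For $x < \beta$, choose $k_0$ with $\alpha_{k_0} > x$; then $x/\alpha_k < 1$ for all $k \geq k_0$, and the recursion forces $b_{j_{k+1}} x^{j_{k+1}} < b_{j_k} x^{j_k}$, so the sup over $k$ is finite and $\tropx f(x) \in \R^+$. For $x > \beta$ (which requires $\beta < \infty$), iterating the recursion with $x/\alpha_k \geq x/\beta > 1$ gives $b_{j_k} x^{j_k} \geq b_{j_{k_0}} x^{j_{k_0}} (x/\beta)^{j_k - j_{k_0}}$, which diverges because $j_k \to +\infty$; hence $\tropx f(x) = +\infty$. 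The hard part will be evaluating $\tropx f(\beta)$ when $\beta < \infty$: I would argue by contradiction, using that if $\tropx f(\beta) \in \R^+$ then by Proposition~\ref{prop:convex} the convex function $\trop f$ would be real-valued on an interval with $\log \beta$ as a right closed endpoint, and would therefore admit a finite left derivative there; but the slope of $\trop f$ on $(\log \alpha_k, \log \alpha_{k+1})$ equals the integer $j_{k+1}$, so this left derivative is $\lim_k j_{k+1} = +\infty$, a contradiction. A symmetric treatment on the left end completes the proof.
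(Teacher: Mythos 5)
Your overall strategy mirrors the paper's: the same three-way case split (largest root exists with finite multiplicity, exists with infinite multiplicity, does not exist), the same reduction to Newton polygon vertices, and—for the substantial case $p=+\infty$—essentially the same monomial comparison argument. Your ``recursion'' $b_{j_{k+1}}x^{j_{k+1}} = b_{j_k}x^{j_k}(x/\alpha_k)^{j_{k+1}-j_k}$ is a clean repackaging of the paper's introduction of $c_j := b_j\alpha_{+\infty}^j$ and the observation that $c_{j_k}$ is increasing; the two treatments of $x<\beta$ and $x>\beta$ are equivalent.

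The one genuine problem is the final step of case $p=+\infty$, where you evaluate $\tropx f(\beta)$ at $\beta=\alpha_{+\infty}<\infty$. You argue by contradiction that if $\tropx f(\beta)\in\R^+$, then $\trop f$ is real-valued on an interval with $\log\beta$ as a closed right endpoint ``and would therefore admit a finite left derivative there.'' This premise is false: a convex function that is finite at a closed endpoint of its domain can perfectly well have infinite one-sided derivative there. Concretely, take a piecewise-linear convex $\trop f$ whose Newton polygon has vertex indices $j_k=k$ and segment slopes $-\log\alpha_k = 2^{-k}$, so that the slope of $\trop f$ on $(\log\alpha_k,\log\alpha_{k+1})$ is $j_{k+1}=k+1\to\infty$ while $\log\alpha_{k+1}-\log\alpha_k = 2^{-(k+1)}$; the total variation $\sum_k (k+1)2^{-(k+1)}$ is finite, so $\trop f$ remains bounded at $\log\beta=0$ even though its left derivative there is $+\infty$. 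In fact this is precisely the configuration in which $\beta$ is a tropical root of \emph{infinite} multiplicity (the paper's case 2), showing that ``real-valued at the closed endpoint'' and ``left derivative $+\infty$'' coexist.

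The conclusion you want is still true, but for a different reason. If $\tropx f(\beta)\in\R^+$, then—combined with $\tropx f(x)=\infty$ for $x>\beta$, which you already established—$\beta$ is a \emph{finite} right endpoint of $D$ that belongs to $D$, hence by Definition~\ref{def:tropicalroot}(3) it is a tropical root of infinite multiplicity. It would then be the largest element of the root sequence, contradicting the standing assumption $p = \sup T = +\infty$. That logical contradiction (rather than any derivative bound) is what forces $\tropx f(\beta)=+\infty$ in this case. Replace the convexity step by this argument and the rest of your proof goes through; the remainder of the write-up is otherwise in line with the paper's.
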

\begin{proof}
We only prove the claim for $\alpha_{+\infty}$, i.e., we argue that
$\tropx f(x)$ is finite if $x < \alpha_{+\infty}$ and large enough,
  and that $\tropx f(x)$ is infinite if $x > \alpha_{+ \infty}$; the
  claim for $\alpha_{- \infty}$ admits an identical proof, so we
  omit it. We analyse separately three possible cases.
\begin{enumerate}
\item \emph{There is a largest tropical root and it has finite
    multiplicity.} First,  note that $\alpha_{+\infty} = +\infty$,
   hence the second part of the thesis is vacuously true. The fact
   that $\tropx f(x) \in \R^+$ for $x$ large enough follows immediately
   from Proposition~\ref{prop:nRoots}, because $b_{j} = 0$ for $j > 0$
   large enough. Here we have $\tropx f(\alpha_{+\infty}) = +\infty$
   by convention.

\item\emph{There is a largest tropical root with infinite
    multiplicity} In this case the statement of the theorem coincides
  with Definition~\ref{def:tropicalroot}, because a root has infinite
  multiplicity if and only if it is a closed endpoint of $D$, where $\tropx
  f(x)$ is a well-defined function. Hence $\tropx f(\alpha_{+\infty})\in\R^+$.

\item \emph{There is  no largest tropical root.} There
  are two subcases.
  
\begin{itemize}
\item If $\alpha_{+\infty} = +\infty$, then the set of tropical roots
  is unbounded above. Let $x \in \R^+$ be sufficiently large, then this
  implies that there exists $k$ such that $\alpha_{k-1} \leq x \leq
  \alpha_{k}$, and hence
  	\[ 
0 < b_{j_{k-1}}x^{j_{k-1}} \leq \tropx f(x) \leq b_{j_k}x^{ j_k} < +
\infty,
\]
while the second implication is vacuously true. Similarly to the first
case, $\tropx f(\alpha_{+\infty}) = +\infty$ by convention.

\item  Assume now $\alpha_{+ \infty} \in \R^+$. If $x =
  \alpha_{+\infty}-\epsilon$ for $\epsilon > 0$ and not too large, then
  there exist two tropical roots $\alpha_{k-1}
  < x < \alpha_{k}$ and we can conclude that $\tropx f(x) \in \R^+$
  arguing similarly to the first subcase; assume now that $x >
  \alpha_{+\infty}$, and write  $x = \alpha_{+\infty}(1+ \epsilon)$, with
  $\epsilon > 0$; we define $c_j := b_{j}\alpha_{+\infty}^j$. Observe
  that for all $j$ it holds
  \[
    \tropx f(x) \geq b_jx^{j} = b_{j}(\alpha_{+\infty}(1+\epsilon))^{j}
    = c_j(1+\epsilon)^j.
    \]
 Since the sequence of tropical roots is increasing, its limit
        for the index tending to $+\infty$ is an upper bound. Hence,
        by Lemma~\ref{lem:newtonIsolated} we have that for all $k$ such that $\alpha_k$ exists
	\[
          \left(\frac{b_{j_{k}}}{b_{j_{k+1}}}\right)^{\frac{1}{j_{k+1}-j_{k}}}
          < \alpha_{+\infty} \Leftrightarrow c_{j_k} < c_{j_{k+1}}.
        \]
        Therefore, $c_{j_k}$ is also an increasing sequence, and in
        particular $c_{j_k} > c_{j_\ell}$ for all $k \geq \ell$. As a
        consequence, fixing any $\ell$ such that $\alpha_\ell$ is
        defined,
		\[ \tropx f(x) \geq b_{j_k}x^{j_k} > c_{j_k}
                  (1+\epsilon)^{j_{k}}
                \]
       whose limit when $k \rightarrow + \infty$ is $+ \infty$. Hence, 
       $\tropx f(x) =+\infty$ for all $x > \alpha_{+\infty}$, and 
       $\alpha_{+\infty}$ is on the right boundary of 
       $D$. Since it cannot be a tropical root 
       (otherwise we would have a largest tropical root 
       contradicting the assumption), we have 
       $\tropx f(\alpha_{+\infty}) =+\infty$.\qedhere
\end{itemize}
\end{enumerate}
\end{proof}

\begin{theorem}[Newton polygon for Tropical Laurent series]
\label{thm:newtonPoly}
Let $\tropx f(x)$ be a tropical Laurent series and let
$\newt{\tropx f}$
be its Newton polygon. Then $\tropx f(x)$ has a largest finite
tropical root $\alpha_{p}$ of infinite multiplicity if and only if
$\newt{\tropx f}$ has a rightmost segment of infinite length and
vertex $(j_p, \log b_{j_p})$, with
\[
  \log \alpha_p =  \inf_{i \in I} \sup_{j \geq i} \left( -\frac{\log
      b_j-\log b_i}{j-i} \right) 
\]
Similarly, $\tropx f(x)$ has a smallest finite
tropical root $\alpha_{f}$ of infinite multiplicity if and only if
$\newt{\tropx f}$ has a leftmost segment of infinite length and
vertex $(j_f, b_{j_f})$, with
\[
  \log \alpha_{f} = \sup_{i \in I} \inf_{j \leq i} \left(
    - \frac{\log b_i - \log b_j}{i - j}
  \right). 
  \]
\end{theorem}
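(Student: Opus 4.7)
The plan is to prove the equivalence via the geometry of the Newton polygon, then read off the formula as the negative slope of the rightmost infinite segment. I focus on the claim about $\alpha_p$; the statement about $\alpha_f$ follows by the symmetric substitution $j \mapsto -j$.

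Forward direction: if $\alpha_p$ is the largest finite tropical root of infinite multiplicity, then by Definition~\ref{def:tropicalroot} $\alpha_p$ is a closed finite right endpoint of $D$; by Proposition~\ref{prop:nRoots} the index set $I$ must then be unbounded above. By Lemmas~\ref{lem:isolated} and~\ref{lem:newtonIsolated}, the tropical roots of finite multiplicity (all lying below $\alpha_p$) correspond bijectively to the finite segments of $\newt{\tropx f}$; hence there is a rightmost finite segment ending at a vertex $(j_p, \log b_{j_p})$, past which the Newton polygon must extend as a single ray of some finite slope, going to $+\infty$. Conversely, suppose $\newt{\tropx f}$ has rightmost vertex $(j_p, \log b_{j_p})$ and an infinite right segment of finite slope $s_\infty := \sup_{j > j_p}\tfrac{\log b_j - \log b_{j_p}}{j - j_p}$, and set $\alpha_p := \eu^{-s_\infty}$. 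Using the supporting-ray inequality $\log b_j \leq \log b_{j_p} + s_\infty(j - j_p)$ for $j \geq j_p$, one deduces $b_j\alpha_p^j \leq b_{j_p}\alpha_p^{j_p}$, so $\tropx f(\alpha_p) \in \R^+$. Moreover, because $(j_p, \log b_{j_p})$ is the rightmost vertex, the slope $s_\infty$ is also equal to $\limsup_{j \to \infty}\tfrac{\log b_j - \log b_{j_p}}{j - j_p}$, and one can choose a subsequence $j_n \to \infty$ realizing it; then $b_{j_n} x^{j_n} \to +\infty$ for every $x > \alpha_p$, so $\tropx f(x) = +\infty$ beyond $\alpha_p$. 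Thus $\alpha_p$ is a closed right endpoint of $D$ and a tropical root of infinite multiplicity, maximal among the finite tropical roots.

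It remains to derive the formula $\log \alpha_p = \inf_{i \in I}\sup_{j \geq i}(-\tfrac{\log b_j - \log b_i}{j - i})$, which by the above equals $-s_\infty$. For the inequality ``$\geq -s_\infty$'' (the right-hand side is at least $-s_\infty$), fix $i \in I$ and use the subsequence $j_n$ above: $\tfrac{\log b_{j_n} - \log b_i}{j_n - i} \to s_\infty$ as $n \to \infty$, so $\sup_{j \geq i}(-\tfrac{\log b_j - \log b_i}{j - i}) \geq -s_\infty$, and the $\inf$ over $i$ inherits this bound. For the reverse inequality, I plan to exhibit a sequence $i_n \in I$ with $(i_n, \log b_{i_n})$ approaching the asymptotic ray from below, so that every chord slope $\tfrac{\log b_j - \log b_{i_n}}{j - i_n}$ with $j > i_n$ exceeds $s_\infty - \epsilon$ for $n$ sufficiently large. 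The main obstacle is the careful selection of such $i_n$; a convenient reduction is the shift $b_j \mapsto b_j \eu^{-j s_\infty}$, which normalizes the asymptotic slope to $0$ while preserving differences of chord slopes, reducing the claim to the case where the Newton polygon has a horizontal right asymptote and the required $i_n$ can be read off from its structure.
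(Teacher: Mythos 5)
Your treatment of the ``if and only if'' is sound, and the converse direction (which the paper leaves implicit) is a nice addition; identifying $s_\infty$ with a $\limsup$ and using a realizing subsequence $j_n$ is exactly the right tool. Your half-proof of the formula is also correct: for each $i$ the subsequence $j_n$ gives $\sup_{j>i}\bigl(-\tfrac{\log b_j-\log b_i}{j-i}\bigr)\ge -s_\infty$, whence $\inf_i\sup_{j>i}(\cdot)\ge-s_\infty$. However, the ``main obstacle'' you flag is not a technicality that a cleverer choice of $i_n$ will remove: the reverse bound $\inf_i\sup_{j>i}(\cdot)\le -s_\infty$ is \emph{false}, because for a fixed $i$ nothing controls how far \emph{below} the asymptotic ray the points $(j,\log b_j)$ may lie, and any such low-lying $j$ inflates $\sup_{j>i}\bigl(-\tfrac{\log b_j-\log b_i}{j-i}\bigr)$. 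Concretely, take $b_j=1$ for even $j\ge 0$ and $b_j=\eu^{-j}$ for odd $j\ge 1$ (all other $b_j=0$). The Newton polygon is the horizontal ray starting at $(0,0)$, so $s_\infty=0$, $\alpha_p=1$, $\log\alpha_p=0$; yet for every $i\in I$, choosing $j$ odd with $j>i$ gives $-\tfrac{\log b_j-\log b_i}{j-i}\ge 1$, so $\inf_i\sup_{j>i}(\cdot)\ge 1\ne 0$.

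What the geometry actually proves is the \emph{reversed} iterated extremum, $\log\alpha_p=\sup_{i\in I}\inf_{j>i}\bigl(-\tfrac{\log b_j-\log b_i}{j-i}\bigr)$: the subsequence $j_n$ gives $\inf_{j>i}(\cdot)\le-s_\infty$ for every $i$, while $i=j_p$ gives $\inf_{j>j_p}(\cdot)=-\sup_{j>j_p}\tfrac{\log b_j-\log b_{j_p}}{j-j_p}=-s_\infty$ exactly. (The general max--min inequality $\sup\inf\le\inf\sup$ explains why your ``$\ge$'' half went through.) The same issue is present in the paper's own proof, which relies on the inequality $\tfrac{\log b_j-\log b_{j_p}}{j-j_p}\le\tfrac{\log b_j-\log b_i}{j-i}$ for $j_p<i<j$; this fails in the example above (take $j_p=0$, $i=2$, $j=3$). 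So the fix is to the statement, not to your strategy. A smaller loose end: in your converse, the supporting-ray bound $\log b_j\le\log b_{j_p}+s_\infty(j-j_p)$ is only available for $j\ge j_p$, so concluding $\tropx f(\alpha_p)\in\R^+$ also requires controlling the indices $j<j_p$ (e.g.\ via the hypothesis that there are only finitely many of them, or that $D$ is nonempty).
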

\begin{proof}
  Assume $\tropx f(x)$ has a largest tropical root of infinite
  multiplicity $\alpha_p$. By Definition~\ref{def:tropicalroot} this
  corresponds to $D$ being bounded above and closed at its upper
  endpoint.  In addition, recall that the set of indices  $I :=\{j : b_{j} > 0\}$
  is not bounded above thanks to
  Proposition~\ref{prop:nRoots}. Furthermore, the sequence $T$ of the
  indices of the tropical roots is 
  bounded above by Theorem~\ref{thm:asymptotics}: in particular, there
  exists a rightmost point of non-differentiability, say,
  $\alpha_{p-1}$, which is in correspondence to a segment on the
  Newton polygon by Lemma~\ref{lem:newtonIsolated}. We label its rightmost
  segment by $(j_{p}, \log b_{j_{p}})$. Then the rightmost convex hull
  of the points $(j,\log b_j)$ must have a rightmost infinite
  segment\footnote{
    To visualize an example of such rightmost 
    infinite segment, see Figure~\ref{fig:ex-upd-1}
    in Section~\ref{sec:practical-computation}.
  }. It follows from the definition of convex hull that the
  slope of this rightmost segment must be equal to the supremum of all
  the slopes of the segments through $(j, \log b_j)$ and $(j_p,\log b_{j_p})$,
  where the supremum is taken over all the (infinitely many) values of
  $j>j_p$ such that $b_j > 0$. Moreover, for any $j_p < i < j$, it must
  be
  \[
    \frac{\log b_j-\log b_{j_p}}{j-j_p} \leq \frac{\log b_j - \log b_i}{j-i},
  \]
  as otherwise $\alpha_{p-1}$ is not the penultimate tropical
  root. Thus, the opposite of the rightmost slope is
  \[
          \inf_{j \geq j_p} \left( - \frac{\log b_j - \log b_{j_p}}{j-j_p}
          \right) = \inf_{i} \sup_{j \geq i}
          \left(-\frac{\log b_j-\log b_i}{j-i} \right) =: \log \alpha_p.
   \]
The case of a smallest tropical root of infinite
   multiplicity is dealt with analogously.
\end{proof}

As noted in the introduction, many researchers have
used the tropical roots of $\tropx p(x)$ to retrieve bounds on the
localization of the roots of a scalar polynomial $p(\lambda)$ or the eigenvalues of
a matrix polynomial 
$P(\lambda)$~\cite{nosh:15, sh:11}. In the next section, we will generalize
this approach to include (matrix-valued) Laurent series. Albeit technical, the results of the present section allow us to make this step very naturally.

\section{Tropicalization of functions analytic on an annulus }
\label{sec:evsLoc}
The aim of this section is to relate the tropical roots of Laurent
series with the eigenvalues of matrix-valued functions analytic
on an annulus. More 
precisely, given the Laurent series
\[
  F(\lambda) := \sum_{j\in\Z}B_{j}\lambda^{j},
  \qquad 
  B_j \in \mathbb{C}^{m \times m},
  \]
we consider $F(\lambda)\colon \Omega \to \Cnn$, where $\Omega$
is the largest annulus where the sum defining $F(\lambda)$ is convergent. 
We denote this set as $\Omega:= \annop(\rr, \RR)$, and the radii 
$\rr$ and $\RR$ are uniquely determined by the decay rate of 
$\lVert B_j \rVert$ for $j \to \pm \infty$. 

\begin{lemma} \label{lem:laurentradius}
  Let $F(\lambda) = \sum_{j\in\Z}B_{j}\lambda^{j}$ be a Laurent series 
  with $B_j \in \mathbb{C}^{m \times m}$, and $\norm{\cdot}$ any 
  matrix norm. Then, the series is convergent in the annulus 
  $\Omega:= \annop(\rr, \RR)$, where 
  \begin{equation}
  \label{eq:laurentRadii}
    \RR^{-1} = \limsup_{j \to \infty} \norm{B_{j}}^{1/j}, \qquad 
    \rr =\limsup_{j \to \infty} \norm{B_{-j}}^{1/j},
  \end{equation}
  where we employ the usual convention $\infty^{-1} = 0$.
\end{lemma}

\begin{proof}
  We argue that the radii of convergence for
  $F(\lambda)$ are the same as those of the scalar series
  $f(\lambda):=\sum_{i \in \Z} b_i \lambda^i$, where $b_i = \norm{B_i}$,
  and that this holds independently of the choice of the norm
  $\norm{\cdot}$. This follows by observing that for all $\lambda$
  \[ \norm{F(\lambda)} \leq \abs{f(\lambda)} \leq c_n \cdot
    \sum_{i,j=1}^n \abs{F(\lambda)_{ij}} \leq c_n \cdot C_n \cdot
    \norm{F(\lambda)}, \] where $c_n,C_n$ are some positive constants
  (both allowed to depend on $n$) and we have used the fact that all
  matrix norms are equivalent.
\end{proof}

Note that the
definition covers Taylor series, for which $\rr = 0$ and 
$\RR$ is the radius of convergence, and 
polynomials, for which $\rr = 0$ and $\RR = \infty$. 

For matrix polynomials, where $\Omega = \C$, the number
of eigenvalues is always determined by the size $n$ of the matrix
coefficients $B_{j}$ and the degree $d$; in
contrast, a Laurent series 
$F(\lambda)$ may have zero, finite or
countably infinite eigenvalues inside $\Omega$. 
We also assume that $F(\lambda)$ is regular, i.e., $\det F(\lambda) \not \equiv 0$. This implies that the eigenvalues of $F(\lambda)$ coincide with the roots of the scalar function $\det F(\lambda)$.
Given an arbitrary, but fixed, subordinate matrix norm $\norm{\cdot}$, to the matrix Laurent series $F(\lambda)$ we
associate the max-times Laurent series
\begin{equation}
\label{eq:Ftropic}
\tropx F(x) := \sup_{j\in \Z}(\norm{B_{j}}x^{j}) = \sup_{j\in \Z}(b_{j}x^{j}).
\end{equation}
While a priori the tropical roots of $\tropx F(x)$ depend on the choice of the norm, we will prove in Theorem \ref{thm:radiiandtrop} that the quantities $\alpha_{\pm \infty}$ do not, and are related to the radii of convergence of $F(\lambda)$. It is clear that a max-plus tropicalization can be defined analogously, and we omit the details.

\subsection{Tropical roots and radii of convergence}

The extension of the definition of tropical roots from tropical polynomials to tropical Laurent series has been
discussed in Section~\ref{sec:tropicalseries}. We noted that most roots
can still be defined as the points of non-differentiability of the
piecewise polynomial function $\tropx F(x)$, and can be computed
through the Newton polygon (see Theorem~\ref{thm:newtonPoly}). This
may pose a computational challenge, as we need to compute the convex
hull of a countable set of points; we discuss how to perform this task
in detail in Theorem~\ref{thm:truncation}. 
We denote the ratio between two consecutive roots with
\begin{equation}
  \label{eq:delta}
  \delta_{j} = \frac{\alpha_{j}}{\alpha_{j+1}} < 1, \qquad \text{for }
  j \in \Z.
\end{equation}
Intuitively, one can
visualize $\delta_{j}$ on the associated Newton polygon. The smaller
$\delta_{j}$ is, the spikier the polygon is in $(j_{k},
\log{B_{j_{k}}})$. We illustrate this below with two scalar examples.

\begin{example}
  \label{ex:toy1}
  Consider the function
  \[
    f(\lambda) =
  \frac{15}{(1-3\lambda)(\lambda-2)},
\]
which is holomorphic in $\annop(1/3,2)$. It is not difficult to see that
\[
  \begin{split}
  f(\lambda) &= 6(\cdots + \frac{1}{3\lambda^{2}}+\frac{1}{\lambda} +
  \frac{1}{2} + \frac{\lambda}{4} + \cdots)\\
  &= \sum_{j\in \Z}b_{j}\lambda^{j}
\end{split}
\]
where $b_{j} = 2\cdot3^{j+2}$ if $j < 0$, and $b_{j} = 3\cdot2^{-j}$
for $j \geq 0$. The (truncated) Newton polygon is given 
in Figure~\ref{fig:toy1} and we can easily detect that there are only two
roots, $\alpha_{-\infty} = 1/3$ and $\alpha_{\infty} = 2$, using 
the characterization from Lemma~\ref{lem:newtonIsolated}. In addition,
one can check that $\tropx f(\alpha_{+\infty}) = 3$ and $\tropx
f(\alpha_{-\infty}) = 18$, hence $\tropx f(x)$ is a well-defined real-valued
function in the domain $D =
\interval{\alpha_{-\infty}}{\alpha_{\infty}}$, in concordance with
both Definition~\ref{def:tropicalroot} and
Theorem~\ref{thm:asymptotics}. 

\begin{figure}
    \centering\includegraphics[width=0.95\textwidth]{./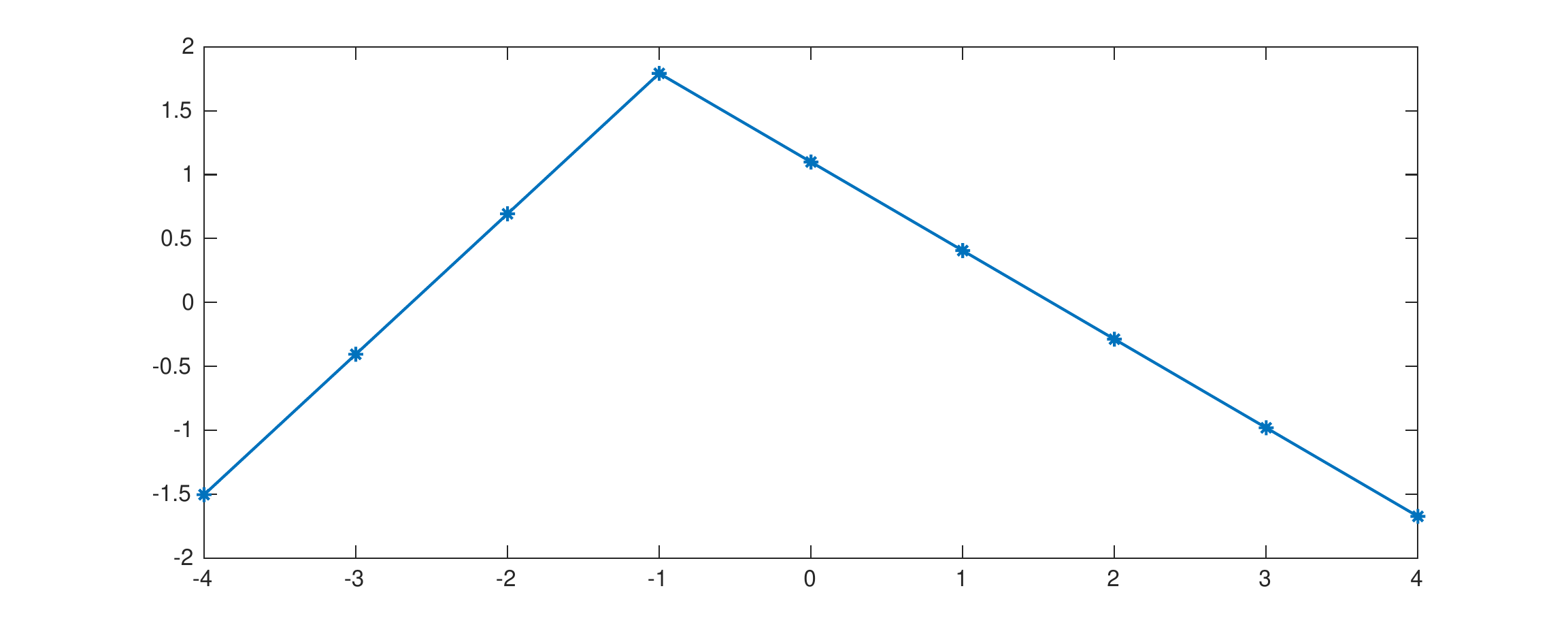}
    \caption{The (truncated) Newton polygon of $\tropx f(x)$. The two tropical
    roots $\alpha_{\pm\infty}$ are in correspondence with the two slopes.}
    \label{fig:toy1}
\end{figure}

\end{example}

\begin{example}
  We consider again the function
  \[
  f(\lambda) = \sum_{j=1}^\infty \eu^{H_j} \lambda^j \Rightarrow \tropx f(x) =
  \sup_{j \geq 0} (\eu^{H_j}x^j).
  \]
  defined in Example~\ref{ex:2}. We have that the domain of $f(\lambda)$ is
  the open disk
  $\tset = \disk(1)$, while the  domain $D$ of
  $\tropx f(x)$ is $D=\interval[open right]{0}{1}$. In
  Figure~\ref{fig:harmonNewt} we plotted its truncated Newton
  polygon. There we can see that the slopes converge from below to
  $0$, in concordance with the fact that the nonzero roots are
  $\alpha_{j} = \eu^{-1/j}\to 1$, as $j$ goes to infinity. 
\begin{figure}
    \centering\includegraphics[width=0.95\textwidth]{./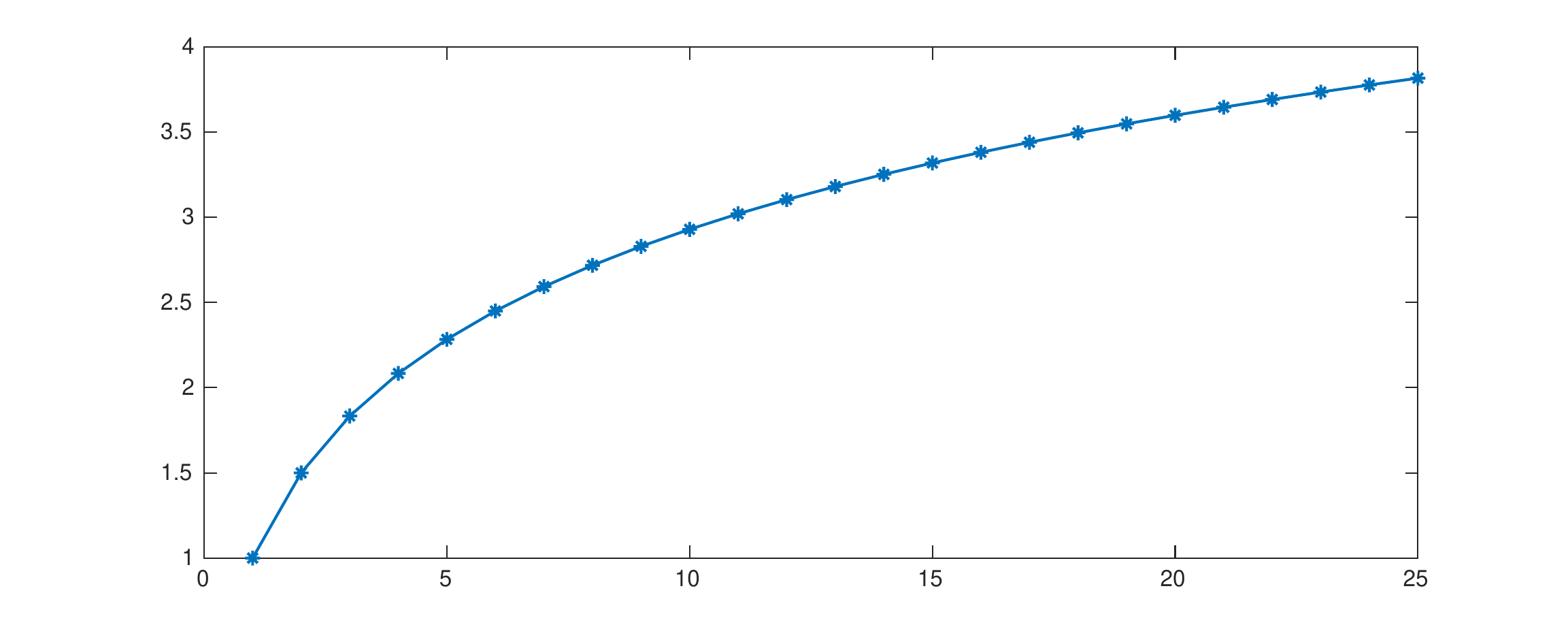}
    \caption{The (truncated) Newton polygon of $\tropx f(x)$. The
      slopes converge from below to $0$. Note that in this case 
      the term polygon is an abuse of notation, since the convex 
      set has an infinite number of edges.}
    \label{fig:harmonNewt}
\end{figure}

\end{example}

In both the examples above, the tropical roots not only converge
to (or are) the endpoints of the domain $D$ of $\tropx f(x)$, but the latter coincide with the radii of convergence of $f(\lambda)$. Once again, this does not
happen by chance, as explained in the following theorem.

\begin{theorem}\label{thm:radiiandtrop}
  Consider the $n \times n$ Laurent matrix-valued function
  $F(\lambda) = \sum_{i\in \Z}B_{i}\lambda^{i}$, holomorphic in the
  open annulus $\Omega=\annop(\rr, \RR)$, where $\rr$ and $\RR$ are
  defined in~\eqref{eq:laurentRadii}. Let $(\alpha_{k})_{k\in T}$ be the
  sequence of distinct tropical roots  of the tropicalization $\tropx F(x)$
and let $\alpha_{\pm\infty}$ be
  the quantities of Definition~\ref{def:ainfty}. Then $\RR =
  \alpha_{+\infty}$ and $\rr = \alpha_{-\infty}$.
\end{theorem}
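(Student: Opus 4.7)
The plan is to apply Theorem~\ref{thm:asymptotics} to the tropicalization $\tropx F$ and compare the interval where $\tropx F$ is finite with the open annulus of convergence. Setting $b_j := \norm{B_j}$, one rewrites \eqref{eq:laurentRadii} in Cauchy--Hadamard form as $\RR^{-1} = \limsup_{j \to +\infty} b_j^{1/j}$ and $\rr = \limsup_{j \to +\infty} b_{-j}^{1/j}$. Any two subordinate matrix norms on $\Cnn$ are equivalent up to multiplicative constants, which vanish after taking $j$-th roots and limsup; this simultaneously shows that the radii $\rr, \RR$ are norm-independent, and that although the individual $b_j$ change with the norm, the quantities $\alpha_{\pm\infty}$ will not.

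First, I would show that for every $x \in \interval[open]{\rr}{\RR}$, one has $\tropx F(x) < +\infty$. Splitting the supremum at $j=0$, one has $\limsup_{j \to +\infty}(b_j x^j)^{1/j} = x/\RR < 1$, hence $b_j x^j \to 0$; an identical argument applied to $k=-j$ gives $\limsup_{k \to +\infty}(b_{-k} x^{-k})^{1/k} = \rr/x < 1$. Both tails are therefore bounded, so their supremum is finite.

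Second, I would show that $\tropx F(x) = +\infty$ whenever $x > \RR$ or $x < \rr$. Symmetrically to the previous step, the relevant limsup becomes strictly greater than $1$, so infinitely many terms $b_j x^j$ (respectively $b_{-k} x^{-k}$) exceed any prescribed bound, forcing the supremum to be $+\infty$. Combining the two steps, the set
\[ S := \{ x \in \R^+ : \tropx F(x) \in \R^+ \} \]
satisfies $\interval[open]{\rr}{\RR} \subseteq S \subseteq \interval{\rr}{\RR}$. Applying Theorem~\ref{thm:asymptotics} to $\tropx F$ gives, for the \emph{same} set, $\interval[open]{\alpha_{-\infty}}{\alpha_{+\infty}} \subseteq S \subseteq \interval{\alpha_{-\infty}}{\alpha_{+\infty}}$. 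Both intervals have the same interior, and an interval is determined by its interior, whence $\rr = \alpha_{-\infty}$ and $\RR = \alpha_{+\infty}$.

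The only delicate point is the behaviour at the two boundary values $x=\rr$ and $x=\RR$: whether $\tropx F$ is finite there depends on finer properties of the sequence $b_j$ (for instance, whether $b_j \RR^j$ is bounded or not when $\limsup b_j^{1/j} = 1/\RR$), and in fact corresponds precisely to the distinction in Theorem~\ref{thm:asymptotics} between the endpoint being a tropical root of infinite multiplicity or not. The argument must therefore be phrased at the level of the open interval only, so that no information at the boundary is needed; the equality of endpoints then follows for free by the uniqueness of the interval determined by a given interior.
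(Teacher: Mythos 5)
Your proof is correct and takes a genuinely different route from the paper's. After the common preliminary reduction to the scalar sequence $b_j = \norm{B_j}$ (both you and the paper use norm equivalence for this), the paper essentially \emph{re-derives} the case analysis of Theorem~\ref{thm:asymptotics} inside the proof of Theorem~\ref{thm:radiiandtrop}: it splits into the three sub-cases of Definition~\ref{def:ainfty} (largest root of finite multiplicity, largest root of infinite multiplicity, no largest root) and, in each, argues directly that the corresponding $\alpha_{+\infty}$ equals $\RR$, invoking Proposition~\ref{prop:nRoots} and a tailored $\limsup$ argument in case~(2). Your proof instead treats Theorem~\ref{thm:asymptotics} as a black box and combines it with a one-line Cauchy--Hadamard argument: the set $S := \{x : \tropx F(x) < \infty\}$ is sandwiched by both pairs of intervals, $\interval[open]{\rr}{\RR} \subseteq S \subseteq \interval{\rr}{\RR}$ and $\interval[open]{\alpha_{-\infty}}{\alpha_{+\infty}} \subseteq S \subseteq \interval{\alpha_{-\infty}}{\alpha_{+\infty}}$, so both open intervals equal $\operatorname{int}(S)$ and the endpoints must coincide. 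This is more modular and avoids duplicating casework; it buys brevity and makes the logical dependence on Theorem~\ref{thm:asymptotics} explicit. The paper's version buys a self-contained argument that does not rely on that theorem's full statement, and along the way it spells out the two-sided inequality $S_2 \le \RR$ and $S_2 \ge \RR$ in the delicate boundary case. One tiny caveat in your write-up: the remark that norm-independence of $\alpha_{\pm\infty}$ ``follows'' from the norm equivalence is really a \emph{consequence} of the theorem being proved (the $\alpha_{\pm\infty}$ a priori depend on the choice of norm through the $b_j$); it does not affect the logic of the argument, which nowhere uses this claim, but it is worth phrasing as a corollary rather than an input.
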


\begin{proof}
  Thanks to Lemma~\ref{lem:laurentradius}, the series is convergent 
  in $\annop(R_1, R_2)$, defined as in \eqref{eq:laurentRadii}. 
  Then, we may equivalently consider $f(\lambda)$ and the proof becomes similar to the one of Theorem~\ref{thm:asymptotics}. As
  usual, we prove this statement only for $\RR$ as the argument for $\rr$ is identical.
\begin{enumerate}
\item\label{item:1} \emph{There is a largest tropical root and it has finite
    multiplicity.} We have $\alpha_{+\infty} = +\infty$,
 and from Proposition~\ref{prop:nRoots} it follows $b_{j} = 0$ for $j
 > 0$ and large enough. Hence $\RR = +\infty$.

 \item\emph{There is a largest tropical root with infinite
     multiplicity}, which we denote by 
   $\alpha_{p} = S_{2}$. By definition
   of tropical roots it holds
  \[
    b_{j}\alpha_{p}^{j} \leq b_{j_p}\alpha_{p}^{j_p}
  \]
  therefore
  \[
   \frac{b_{j}}{b_{j_{p}}} \leq S_{2}^{(j_p-j)}.
  \]
  This implies that $F(\lambda)$ is well-defined for every
  $\abs{\lambda} \leq S_{2}$ by~\eqref{eq:laurentRadii}, therefore
  $S_{2} \leq \RR$. We now claim that $S_{2} \geq R_{2}$. Let
  $(b_{j_{k}})_{k\in \N}$ be the subsequence of $(b_{j})_{j\in \N}$
  obtained by only keeping the indices corresponding to a curve that
  attains the supremum in the definition of $\tropx F(x)$. Then
  \[
    \frac{1}{R_{2}} \geq \limsup_{j\to \infty} \abs{b_{j}}^{1/j} \geq
    \limsup_{k\to \infty}b_{j_{k}}^{1/j_{k}}.
  \]
  Let $j_{k'} = p$ be the index corresponding to the last tropical
  root with infinite multiplicity $\alpha_{p}$. Then for all $k > k'$
  we have
  \[
    x^{j_{k}}b_{j_{k}} \geq x^{p}b_{p},
  \]
  for every $x \geq \alpha_{p}$ by definition of tropical root. Hence,
  \[
  b_{j_{k}} \geq b_{p}S_{2}^{p-j_{k}}\Rightarrow
  b_{j_{k}}^{1/j_{k}} \geq \frac{(S_{2}^{p}b_{p})^{1/j_{k}}}{S_{2}}.
\]
It follows that
\[\frac{1}{\RR} \geq \limsup_{k\to
    \infty}\frac{(S_{2}^{p}b_{p})^{1/j_{k}}}{S_{2}} =
  \frac{1}{S_{2}}.
\]

\item \emph{There is no largest tropical root.} 
If there is no largest tropical root, then $\alpha_{+\infty} =
\lim_{j\to+\infty}\alpha_{j}$. If $\alpha_{+\infty} \in\R^{+}$,
then one can argue as in case (2) above and show that $\RR =
\alpha_{+\infty}$. If $\alpha_{+\infty} = +\infty$, then the sequence
of tropical roots is unbounded. But if it is unbounded, then again by the
  proof of case (2) given any tropical root $\alpha_k$ the
  power series defining $F(\lambda)$ must converge for all
  $\abs{\lambda} < \eu^{\alpha_k}$, hence $\RR=+\infty$.\qedhere
\end{enumerate}
\end{proof}

\subsection{Eigenvalue localization for Laurent series}
In this section we generalize to matrix-valued Laurent series the localization theorems derived in \cite{nosh:15} for matrix-valued polynomials. We are going to follow the steps of~\cite{nosh:15};
thus, we simply state those results whose proof literally does not change, and we prove the ones that need some adjustments due to the infinite
number of indices.  As expected, the core idea is Rouché's Theorem,
which we recall for clarity.

\begin{theorem}[Rouché's Theorem]
\label{thm:rouche}
  Let $S, Q : \tset_0 \to \Cnn$ be meromorphic matrix-valued functions,
  where $\tset_0$ is an open connected subset of $\C$. Assume that
  $S(x)$ is nonsingular for every $x$ on the simple closed curve
  $\Gamma \subset \tset_{0}$. Let $\norm{\cdot}$ be any operator norm on
  $\Cnn$ induced by a vector norm on $\C^{n}$. If
  $\norm{S(x)^{-1}Q(x)} < 1$ for every $x\in \Gamma$, then
  $\det (S+Q)$ and $\det (S)$ have the same number of zeros minus
  poles inside $\Gamma$, counting multiplicities.
\end{theorem}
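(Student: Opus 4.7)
The plan is to combine a matrix-valued extension of the argument principle with a homotopy argument that continuously deforms $S$ into $S + Q$, keeping the deformation nonsingular on $\Gamma$ throughout. The first ingredient is the integral representation: for any meromorphic matrix-valued $F$ with $\det F \not \equiv 0$, nonsingular on $\Gamma$, the number of zeros minus poles of $\det F$ inside $\Gamma$ counted with multiplicity equals
\[
N(F) = \frac{1}{2\pi \iu} \oint_\Gamma \operatorname{tr}\bigl(F(z)^{-1} F'(z)\bigr)\, dz,
\]
obtained by applying the scalar argument principle to the meromorphic function $\det F$ and invoking Jacobi's formula $(\log \det F)' = \operatorname{tr}(F^{-1} F')$ wherever $F$ is holomorphic and invertible.

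Next, I would introduce the linear homotopy $F_t(z) := S(z) + t Q(z)$ for $t \in [0,1]$ and verify that $F_t$ is nonsingular at every $z \in \Gamma$ and every $t \in [0,1]$. The factorization $F_t = S\bigl(I + t S^{-1}Q\bigr)$ reduces the question to the invertibility of $I + t S^{-1}Q$ pointwise on $\Gamma$, and this follows from the uniform bound $\|t S^{-1}Q(z)\| \leq \|S^{-1}Q(z)\| < 1$ together with the Neumann series, which also gives a uniform bound on $\|F_t(z)^{-1}\|$ on $\Gamma$.

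The final step is to argue that $t \mapsto N(F_t)$ is a continuous function on $[0,1]$. By the previous step, $F_t^{-1}$ exists throughout the compact set $\Gamma \times [0,1]$, so a compactness argument yields a uniform bound on $\|F_t(z)^{-1}\|$ and the integrand $\operatorname{tr}(F_t^{-1} F_t')$ is jointly continuous there. Consequently the contour integral depends continuously on $t$, and since it takes integer values it must be constant, yielding $N(S) = N(F_0) = N(F_1) = N(S+Q)$, as desired.

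The main obstacle is establishing the integral representation of $N(F)$ in the full meromorphic setting: one must reconcile the ``zeros minus poles of $\det F$'' bookkeeping with the eigenvalues and poles of the matrix function itself. The cleanest way around this is to regard $\det F$ as a scalar meromorphic function on the interior of $\Gamma$ (which it is, since the entries of $F$ are) and to apply the classical scalar argument principle directly to it, so that no finer invariants such as Smith form or partial multiplicities need to be invoked. A secondary subtlety is the implicit assumption that $S$ (and hence each $F_t$) has no poles on $\Gamma$, which is built into the hypothesis that $\|S(x)^{-1} Q(x)\| < 1$ makes pointwise sense on $\Gamma$.
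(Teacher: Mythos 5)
The paper does not prove this theorem; it is recalled as a classical result (a matrix-valued Rouché theorem, of the type found in Gohberg--Sigal type references) and used without proof. So there is no ``paper's proof'' to compare against. Judged on its own merits, your argument is correct and is the standard route: apply the scalar argument principle to $\det F$, convert the logarithmic derivative into $\operatorname{tr}(F^{-1}F')$ via Jacobi's formula, deform along the linear homotopy $F_t = S + tQ$, use the factorization $F_t = S(I + tS^{-1}Q)$ together with the hypothesis $\lVert S^{-1}Q\rVert < 1$ and a Neumann-series estimate to keep $F_t$ invertible on $\Gamma$ for all $t \in [0,1]$, and conclude by continuity of the integer-valued winding integral. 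You also correctly flag that the hypothesis implicitly forces $S$ and $Q$ to be pole-free on $\Gamma$, which is exactly what makes $F_t$ and $F_t'$ holomorphic in a neighborhood of $\Gamma$ and the integrand jointly continuous on the compact set $\Gamma \times [0,1]$. The only point you might tighten is the observation that the argument principle for $\det F_t$ requires $\det F_t \not\equiv 0$; this follows from nonsingularity on $\Gamma$, but it is worth stating since you quote the formula ``for any meromorphic $F$ with $\det F \not\equiv 0$.''
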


When we will apply Rouché's Theorem to meromorphic functions
$F(\lambda)$, hence their Laurent expansions will only have a finite
amount of negative indices.  Under these settings, $\tset_{0}$ will be
the disk $\disk(\RR)$, and, as in~\cite{nosh:15}, we are using a given
tropical root $\alpha_{j}$ as a parameter scaling, with
$\lambda = \alpha_{j}\mu$ and $\Ft(\mu)$ equal to
\begin{equation}
\label{eq.scalfun}
        \left(\tropx F(\alpha_j)\right)^{-1} F(\lambda) =
        \left(\norm{B_{k_{j-1}}}\alpha_j^{{k_{j-1}}}\right)^{-1}
        F(\alpha_j\mu) = \sum_{i=0}^d\Bt_i\mu^i=:\Ft(\mu),
\end{equation}
where
\begin{equation}\label{def.Bi}
\Bt_i=\left(\norm{B_{k_{j-1}}}\alpha_j^{{k_{j-1}}}\right)^{-1}B_i\alpha_j^i.
\end{equation}
\begin{lemma}[{\cite[Lemma 2.2]{nosh:15}}]
\label{prop:norms}
The norms of the coefficients $\Bt_{i}$~\eqref{def.Bi} of the scaled function have the
following properties:
\[
\norm{\Bt_i}\leq
  \begin{cases}
  {\delta_{j-1}}^{{k_{j-1}}-i} & \text{if} \quad   i<{k_{j-1}},\\
   1 &\text{if} \quad k_{j-1}< i < k_j, \\
  {\delta_{j}}^{i-{k_j}} & \text{if} \quad i > k_j,
  \end{cases}
\qquad
\norm{\Bt_{k_{j-1}}}=\norm{\Bt_{k_j}}=1.
\]
\end{lemma}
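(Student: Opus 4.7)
The plan is to rely directly on the defining property of the tropical roots. By Lemma~\ref{lem:newtonIsolated}, at each tropical root $\alpha_\ell$ the supremum in $\tropx F(\alpha_\ell)$ is attained by both neighbouring indices:
\[
\norm{B_{k_{\ell-1}}}\alpha_\ell^{k_{\ell-1}} = \norm{B_{k_\ell}}\alpha_\ell^{k_\ell} = \tropx F(\alpha_\ell),
\]
while in general $\norm{B_i}\alpha_\ell^i \leq \tropx F(\alpha_\ell)$ for every $i$. From \eqref{def.Bi} and the basic properties of norms one immediately obtains
\[
\norm{\Bt_i} = \frac{\norm{B_i}\alpha_j^i}{\norm{B_{k_{j-1}}}\alpha_j^{k_{j-1}}},
\]
and all four claimed bounds will be extracted by inserting the tropical root property at a suitable $\alpha_\ell$ and rescaling.

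The equalities $\norm{\Bt_{k_{j-1}}} = \norm{\Bt_{k_j}} = 1$ are then straightforward: the first is definitional, the second follows from applying the identity above at $\ell = j$. Similarly, the bound $\norm{\Bt_i} \leq 1$ for $k_{j-1} < i < k_j$ follows at once from $\norm{B_i}\alpha_j^i \leq \tropx F(\alpha_j) = \norm{B_{k_{j-1}}}\alpha_j^{k_{j-1}}$.

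For the two nontrivial estimates I would invoke the identity at the adjacent tropical root. When $i < k_{j-1}$, applying it at $\ell = j - 1$ gives $\norm{B_i}/\norm{B_{k_{j-1}}} \leq \alpha_{j-1}^{k_{j-1}-i}$; substituting into the formula for $\norm{\Bt_i}$ yields
\[
\norm{\Bt_i} \leq \alpha_{j-1}^{k_{j-1}-i}\,\alpha_j^{i-k_{j-1}} = \delta_{j-1}^{k_{j-1}-i}.
\]
When $i > k_j$, I would first use the identity at $\ell = j$ to rewrite $\norm{\Bt_i} = \norm{B_i}\alpha_j^{i-k_j}/\norm{B_{k_j}}$, and then apply it at $\ell = j + 1$ to get $\norm{B_i}/\norm{B_{k_j}} \leq \alpha_{j+1}^{k_j-i}$; combining the two gives $\norm{\Bt_i} \leq \delta_j^{i-k_j}$.

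There is no real obstacle beyond careful bookkeeping of exponents; this is essentially the same argument as in the polynomial case \cite{nosh:15}. The only mildly delicate point specific to the Laurent setting is to verify that the adjacent tropical roots $\alpha_{j\pm 1}$ indeed exist whenever the corresponding bound is invoked. This is automatic: if there is no tropical root $\alpha_{j-1}$, then there is no nonzero $B_i$ with $i < k_{j-1}$ and the first bound is vacuous, and the analogous statement applies to the right.
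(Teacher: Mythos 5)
Your argument is correct. The paper does not reprove this lemma in the text---it is stated with a bare citation to \cite[Lemma 2.2]{nosh:15}---so there is no in-text proof to compare against, but your reconstruction is exactly the standard argument from the polynomial case: use the definition of $\Bt_i$ to write $\norm{\Bt_i}=\norm{B_i}\alpha_j^{i-k_{j-1}}/\norm{B_{k_{j-1}}}$, then invoke the fact that at $\alpha_j$ the supremum is attained simultaneously at $k_{j-1}$ and $k_j$ (giving the two equalities and the middle bound), and invoke the domination $\norm{B_i}\alpha_{j\pm1}^i\leq\norm{B_{k_{j-1}}}\alpha_{j-1}^{k_{j-1}}$ (resp.\ $\leq\norm{B_{k_j}}\alpha_{j+1}^{k_j}$) at the adjacent roots to get the geometric decay on each side. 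One very minor remark: besides the vacuous case you mention, the adjacent root $\alpha_{j\pm1}$ may exist but have infinite multiplicity (i.e.\ be a closed endpoint of $D$); the inequality $\norm{B_i}\alpha_{j\pm1}^i\leq\norm{B_{k_{j\mp 1}}}\alpha_{j\pm1}^{k_{j\mp 1}}$ you need still holds there, since it follows from the Newton polygon having an infinite terminal segment with slope $-\log\alpha_{j\pm1}$ (Theorem~\ref{thm:newtonPoly}) rather than from the ``attained at two indices'' characterization of Lemma~\ref{lem:newtonIsolated}.
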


With the goal of invoking Theorem~\ref{thm:rouche}, we decompose
$\Ft(\mu)$ as the sum of
\begin{equation}
\label{eq:decompo}
S(\mu) = \sum_{i=k_{j-1}}^{k_{j}}\Bt_{i}\mu^{i}, \qquad
Q(\mu) = \sum_{i\not\in\{k_{j-1},\dots, k_{j}\}}\Bt_{i}\mu^{i}.
\end{equation}
We use the notation $\kappa(A) := \norm{A} \norm{A^{-1}}$ 
to denote the condition number of any nonsingular matrix $A$. 
Thanks to the following results, we can localize the 
eigenvalues of $S(\lambda)$.
\begin{lemma}[{\cite[Lemma~2.3]{nosh:15}}]
  \label{prop:eigall}
  Let $P(\lambda) = \sum_{j=0}^{\ell} B_j \lambda^j$ with
  $B_0,B_{\ell} \neq 0$ be a regular matrix polynomial.  Then every
  eigenvalue of $P(\lambda)$ satisfies
\[
  \left(1+\cond(B_0)\right)^{-1}\alpha_1 \leq \abs{\lambda}\leq \left(1+\cond(B_\ell)\right)\alpha_q, 
\]
where $\alpha_1, \alpha_q$ are the smallest and largest finite tropical 
roots, respectively. Furthermore, if  $B_0$ and $B_\ell$ 
are invertible, both inequalities are strict.
\end{lemma}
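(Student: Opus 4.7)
The plan is to prove both bounds via a Neumann-series argument, factoring out the dominant monomial at each extreme of the degree range and controlling the remainder using the defining property of the extreme tropical roots. For the upper bound, assume first that $B_\ell$ is invertible and write
\[
P(\lambda) = B_\ell \lambda^\ell \bigl(I + E(\lambda)\bigr), \qquad E(\lambda) := B_\ell^{-1} \sum_{i=0}^{\ell-1} B_i \lambda^{i-\ell}.
\]
Whenever $\|E(\lambda)\| < 1$ the factor $I + E(\lambda)$ is invertible, hence $P(\lambda)$ is nonsingular; it therefore suffices to verify this inequality for every $\lambda$ with $|\lambda| \geq (1+\cond(B_\ell))\alpha_q$.

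The key observation is that, since $\alpha_q$ is the largest finite tropical root of $\tropx P$, the leading monomial $\|B_\ell\| x^\ell$ attains the supremum of $\tropx P(x)$ on $[\alpha_q,+\infty)$; in particular, evaluating at $\alpha_q$ yields $\|B_i\| \alpha_q^i \leq \|B_\ell\| \alpha_q^\ell$ for every $i$. Setting $r := |\lambda|/\alpha_q > 1$ and summing the resulting geometric majorant, I obtain
\[
\|E(\lambda)\| \leq \cond(B_\ell) \cdot r^{-\ell} \sum_{i=0}^{\ell-1} r^i = \cond(B_\ell) \cdot \frac{1-r^{-\ell}}{r-1}.
\]
The right-hand side is strictly less than $1$ as soon as $r-1 \geq \cond(B_\ell)$, since $1-r^{-\ell} < 1$; this gives the upper bound, with strict inequality whenever $B_\ell$ is invertible. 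When $B_\ell$ is nonzero but singular, $\cond(B_\ell) = +\infty$ and the claimed inequality is vacuously true.

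The lower bound follows from the symmetric factorization $P(\lambda) = B_0 \bigl(I + \tilde E(\lambda)\bigr)$ with $\tilde E(\lambda) := B_0^{-1} \sum_{i=1}^\ell B_i \lambda^i$. The relevant extremal property is now that the constant monomial $\|B_0\|$ attains the supremum of $\tropx P(x)$ on $[0,\alpha_1]$, yielding $\|B_i\| \alpha_1^i \leq \|B_0\|$ for every $i \geq 1$. Letting $s := |\lambda|/\alpha_1$, a geometric-series estimate gives $\|\tilde E(\lambda)\| \leq \cond(B_0) \cdot s(1-s^\ell)/(1-s) < \cond(B_0) \cdot s/(1-s)$, which is less than $1$ whenever $s\,(1+\cond(B_0)) \leq 1$; the case $B_0$ singular is again vacuous.

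I do not anticipate any serious obstacle: the argument reduces to elementary geometric-series estimates once one recognizes that the extreme tropical roots provide exactly the normalization making $\|B_\ell\|$ (respectively $\|B_0\|$) majorize all other scaled coefficients. The only slightly delicate bookkeeping is isolating the factor $1+\cond$ rather than the weaker $\cond$, which is achieved by comparing $\cond(B_\ell)(1-r^{-\ell})$ with $r-1$; the required inequality holds precisely when $r \geq 1+\cond(B_\ell)$, giving the sharp threshold appearing in the statement.
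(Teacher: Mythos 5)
Your proof is correct. The paper itself does not reprove this statement (it is imported verbatim from \cite[Lemma~2.3]{nosh:15}), but your argument is essentially the same one the authors use when they extend the result to Laurent series in Theorem~\ref{thm:lower-upper}: there they follow \cite[Lemma~4.1]{higham2003bounds}, take a normalized eigenvector $x$, and bound $\norm{F(\lambda)x}$ from below by peeling off the dominant term and estimating the tail with the tropical-root inequality $\norm{B_i}\leq \alpha_q^{\ell-i}\norm{B_\ell}$ together with a geometric series. Your Neumann-series factorization $P(\lambda)=B_\ell\lambda^\ell\bigl(I+E(\lambda)\bigr)$ packages the identical estimate---$\norm{E(\lambda)}\le \cond(B_\ell)\sum_{k=1}^{\ell} r^{-k}$ is the same sum that appears inside the bracket in the paper's lower bound on $\norm{F(\lambda)x}$---and reaching $\norm{E(\lambda)}<1$ is equivalent to the paper's conclusion that $\norm{F(\lambda)x}>0$. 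The only cosmetic differences are that you keep the finite geometric sum $\frac{1-r^{-\ell}}{r-1}$ exactly (the paper majorizes by the infinite series $\frac{\alpha_q}{|\lambda|-\alpha_q}$, which it must, being in the Laurent setting), and that your phrasing makes the strictness statement and the vacuous singular cases a bit more explicit. Both are fine; the approach, the key inequality from the extremal tropical root, and the bookkeeping that isolates the sharp factor $1+\cond(\cdot)$ all match.
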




\begin{lemma}[{\cite[Lemma 2.4]{nosh:15}}]
  \label{prop:eig}
  Let $m_{j}:= k_{j}-k_{j-1}$. Then, if $B_{k_{j-1}}$ and $B_{k_{j}}$ are nonsingular, then the
  $n(k_{j}-k_{j-1})$ nonzero eigenvalues of $S(\lambda)$ are located
  in the open annulus
  \mbox{$\annop((1+\cond(B_{k_{j-1}}))^{-1}, 1+\cond(B_{k_{j}}))$}.
\end{lemma}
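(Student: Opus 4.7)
The plan is to reduce Lemma \ref{prop:eig} to the matrix polynomial case already handled by Lemma \ref{prop:eigall}. The key observation is that the scaling hidden in $\Bt_i$ has been chosen precisely so that the Newton polygon of the truncation $S(\mu)$ collapses to a single horizontal segment, which makes the tropical analysis trivial.

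First, I would factor
\[
 S(\mu) = \mu^{k_{j-1}} P(\mu), \qquad P(\mu) := \sum_{i=0}^{m_j} \Bt_{k_{j-1}+i}\,\mu^i.
\]
By \eqref{def.Bi}, $\Bt_{k_{j-1}}$ and $\Bt_{k_j}$ are nonzero scalar multiples of $B_{k_{j-1}}$ and $B_{k_j}$, hence nonsingular by hypothesis. Thus $P(\mu)$ is a regular matrix polynomial of degree exactly $m_j$ with invertible trailing and leading coefficients. In particular $P$ has $n m_j$ eigenvalues counted with multiplicity, all nonzero, and these coincide with the nonzero eigenvalues of $S(\mu)$; this already delivers the count $n(k_j - k_{j-1})$.

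Next, I would identify the tropical roots of $P$. By Lemma \ref{prop:norms}, $\norm{\Bt_{k_{j-1}}} = \norm{\Bt_{k_j}} = 1$ and $\norm{\Bt_i} \leq 1$ for $k_{j-1} < i < k_j$. Consequently all points $(i, \log \norm{\Bt_{k_{j-1}+i}})$ lie on or below the horizontal segment from $(0, 0)$ to $(m_j, 0)$, so the Newton polygon of $\tropx P$ is precisely this segment. By Lemma \ref{lem:newtonIsolated} the only tropical root of $P$ is $1$, with multiplicity $m_j$; in the notation of Lemma \ref{prop:eigall}, $\alpha_1 = \alpha_q = 1$.

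Finally, I would invoke Lemma \ref{prop:eigall} applied to $P$. Since $\Bt_{k_{j-1}}$ and $\Bt_{k_j}$ are invertible, the strict form of the estimate applies, so every eigenvalue $\lambda$ of $P$ satisfies
\[
 \bigl(1 + \cond(\Bt_{k_{j-1}})\bigr)^{-1} < |\lambda| < 1 + \cond(\Bt_{k_j}).
\]
Because condition numbers are invariant under nonzero scalar rescaling, $\cond(\Bt_{k_{j-1}}) = \cond(B_{k_{j-1}})$ and $\cond(\Bt_{k_j}) = \cond(B_{k_j})$, yielding exactly the stated open annulus. There is no genuine obstacle here: the only step requiring care is the Newton polygon computation, which is essentially mechanical once Lemma \ref{prop:norms} is in hand, and the infinite-series nature of $F$ has already been absorbed into the scaling and truncation that define $S$.
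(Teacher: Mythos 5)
Your proof is correct, and it proceeds by the same underlying idea as the paper: factor out $\mu^{k_{j-1}}$ so that $S(\mu) = \mu^{k_{j-1}}P(\mu)$ for a genuine matrix polynomial $P$, and then invoke the polynomial-case bounds. The only difference is one of granularity: the paper simply notes that once the shift is done the statement \emph{is} \cite[Lemma~2.4]{nosh:15} and cites it as a black box (handling the pole count separately), whereas you re-derive that cited lemma from Lemma~\ref{prop:eigall} by explicitly computing the Newton polygon of $P(\mu)$ and observing via Lemma~\ref{prop:norms} that it collapses to the horizontal segment from $(0,0)$ to $(m_j,0)$, so $\alpha_1 = \alpha_q = 1$. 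Your version is therefore slightly more self-contained with respect to the present paper, at the cost of reproving a result that the authors chose to quote; both are valid, and the key insight (the scaling in~\eqref{def.Bi} normalizes the endpoints to norm one and flattens the polygon) is exactly what makes the cited polynomial lemma applicable.
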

\begin{proof}
  If $k_{j-1} \geq 0$, then the lemma is in fact equivalent  to~\cite[Lemma 2.4]{nosh:15}. Otherwise, consider the polynomial
  $\wt{S}(\lambda) = \lambda^{-k_{j-1}}S(\lambda)$, which we can apply
  the mentioned above to. This implies that $\wt{S}(\lambda)$ has
  $nm_{j}$ nonzero eigenvalues in
  $\annop((1+\cond(B_{k_{j-1}}))^{-1}, 1+\cond(B_{k_{j}}))$, which
  yields that $S(\lambda)$ has $nm_{j}$ nonzero eigenvalues and
  $nk_{j-1}$ poles in zero.
\end{proof}
\begin{lemma}[{\cite[Lemma 2.5]{nosh:15}}]
\label{lem:genb}
The following inequalities hold for $Q(\mu)$ and the inverse of $S(\mu)$ in
\eqref{eq:decompo},
\begin{eqnarray*}
\norm{S(\mu)^{-1}}&\le& \left\{
\begin{array}{ll}
\displaystyle{\frac{\cond(B_{k_j-1})
\abs{\mu}^{-k_{j-1}}(1-\abs{\mu})}{1-\abs{\mu}\big(1+\cond(B_{k_j-1})(1-\abs{\mu}^{m_j})\big)}}
&
\mbox{if $0<\abs{\mu}\le \big(1+\cond(B_{k_j-1})\big)^{-1}$,}\\
\mystrut{1cm}
\displaystyle{\frac{\cond(B_{k_j}) \abs{\mu}^{-{k_j}}(\abs{\mu}-1)}{\abs{\mu}-1-\cond(B_{k_j})(1-\abs{\mu}^{-{m_j}})}}&
\mbox{if $\abs{\mu}\ge 1+\cond(B_{k_j})$}.\\
\end{array}
\right.,
\\
\mystrut{1.5cm}
\norm{Q(\mu)}&\le&
\frac{{\delta_{j-1}} \abs{\mu}^{k_{j-1}}}{\abs{\mu}-{\delta_{j-1}}}
+\frac{{\delta_{j}} \abs{\mu}^{k_j+1}}{1-{\delta_{j}} \abs{\mu}}
\qquad \mbox{if $\delta_{j-1}<\abs{\mu}<\displaystyle{\frac{1}{\delta_j}}$}.
\end{eqnarray*}
\end{lemma}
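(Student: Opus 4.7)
Since this lemma is cited as \cite[Lemma 2.5]{nosh:15}, the plan is mostly to verify that the known argument adapts to the Laurent setting, where $k_{j-1}$ may now be negative. For the $\|Q(\mu)\|$ bound, I would start from
\[
Q(\mu) = \sum_{i<k_{j-1}}\Bt_i\mu^i + \sum_{i>k_j}\Bt_i\mu^i,
\]
apply the triangle inequality, and substitute the estimates of Lemma~\ref{prop:norms}. Re-indexing $n=k_{j-1}-i$ in the first sum converts it into the geometric series $|\mu|^{k_{j-1}}\sum_{n\ge 1}(\delta_{j-1}/|\mu|)^n$, convergent exactly when $|\mu|>\delta_{j-1}$; re-indexing $n=i-k_j$ in the second gives $|\mu|^{k_j}\sum_{n\ge 1}(\delta_j|\mu|)^n$, convergent exactly when $|\mu|<1/\delta_j$. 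Evaluating both closed forms produces precisely the two displayed terms.

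For $\|S(\mu)^{-1}\|$, the only Laurent-specific step is to factor $S(\mu)=\mu^{k_{j-1}}P(\mu)$, where $P(\mu):=\sum_{i=0}^{m_j}\Bt_{k_{j-1}+i}\mu^i$ is an honest polynomial of degree $m_j$ whose constant and leading coefficients have unit norm by Lemma~\ref{prop:norms}; this reduces the task to bounding $\|P(\mu)^{-1}\|$ exactly as in the polynomial case. In the small-$|\mu|$ regime I would write $P(\mu)=\Bt_{k_{j-1}}(I+E(\mu))$ with $E(\mu):=\Bt_{k_{j-1}}^{-1}\sum_{i=1}^{m_j}\Bt_{k_{j-1}+i}\mu^i$. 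Using $\|\Bt_{k_{j-1}}^{-1}\|=\cond(B_{k_{j-1}})$ and $\|\Bt_{k_{j-1}+i}\|\le 1$, the geometric sum yields $\|E(\mu)\|\le \cond(B_{k_{j-1}})|\mu|(1-|\mu|^{m_j})/(1-|\mu|)$, which is strictly less than $1$ under the hypothesis $|\mu|\le (1+\cond(B_{k_{j-1}}))^{-1}$. A Neumann bound on $(I+E(\mu))^{-1}$, multiplied by the factor $|\mu|^{-k_{j-1}}$ that comes from inverting $\mu^{k_{j-1}}$, then reproduces the displayed estimate after straightforward algebra. The large-$|\mu|$ case is symmetric: factor $P(\mu)=\mu^{m_j}\Bt_{k_j}(I+\wt E(\mu))$, estimate the geometric series in $|\mu|^{-1}$ via $\|\Bt_{k_j}^{-1}\|=\cond(B_{k_j})$, apply Neumann, and use $k_{j-1}+m_j=k_j$ to collect the expected factor $|\mu|^{-k_j}$.

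The only (quite mild) obstacle is the bookkeeping needed to confirm that the stated conditions on $|\mu|$ force the denominators in both formulas to be positive; this reduces in each case to the trivial inequality $1-|\mu|^{m_j}\le 1$, after which positivity follows directly from $|\mu|(1+\cond(B_{k_{j-1}}))<1$, respectively $|\mu|-1>\cond(B_{k_j})$, which are exactly the hypotheses. Since the polynomial $P(\mu)$ behaves identically regardless of the sign of $k_{j-1}$, no genuinely new phenomenon arises compared to \cite{nosh:15}, and the Laurent case inherits the proof verbatim once the factorization $S(\mu)=\mu^{k_{j-1}}P(\mu)$ is isolated.
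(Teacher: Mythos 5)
Your proposal is correct and follows essentially the same route as the paper: the $\norm{Q(\mu)}$ bound via the triangle inequality, Lemma~\ref{prop:norms}, and geometric series matches the paper's computation, while for $\norm{S(\mu)^{-1}}$ the paper merely cites \cite[Lemma 2.5]{nosh:15} after observing that $S$ is a shifted polynomial, which is precisely the factorization $S(\mu)=\mu^{k_{j-1}}P(\mu)$ plus Neumann-series argument that you spell out. The only substantive difference is that you reproduce the details of the polynomial-case estimate rather than delegating them to the reference, but the underlying mechanism is identical.
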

\begin{proof}
  Given that $S(\mu)$ is a Laurent polynomial, the proof about
  $\norm{S(\mu)}^{-1}$ is identical to~{\cite[Lemma
    2.5]{nosh:15}}. The steps for $\norm{Q(\mu)}$ are not
dissimilar either, but the details are slightly different due to
  $Q(\mu)$ being a Laurent series.  Assume that
  $\delta_{j-1}<\abs{\mu}<\delta_j^{-1}$.  Using \eqref{eq:decompo}
  and Lemma~\ref{prop:norms} we have that
  \begin{align*}
    \norm{Q(\mu)}
     &\le \sum_{i<{k_{j-1}}-1}
       {\delta_{j-1}}^{{k_{j-1}}-i}\abs{\mu}^i+ \sum_{i>{k_{j}}+1}
       {\delta_{j}}^{i-{k_{j}}}\abs{\mu}^i\\
    &\leq\frac{{\delta_{j-1}}(\abs{\mu}^{k_{j-1}}-{\delta_{j-1}}^{k_{j-1}})}{\abs{\mu}-{\delta_{j-1}}}
    + \frac{{\delta_{j}} \abs{\mu}^{{k_{j}}+1}(1-({\delta_{j}}
                                               \abs{\mu})^{-{k_{j}}})}{1-{\delta_{j}}
                                               \abs{\mu}} 
    \end{align*}
    and the bound in the lemma follows since
$\delta_{j-1}<\abs{\mu}<\displaystyle{\frac{1}{\delta_j}}$.
\end{proof}

After the upcoming  technical lemma, we will have all the tools to
generalize~\cite[Theorem~2.7]{nosh:15} to Laurent matrix-valued
functions. Not every statement translates exactly under
these new settings, given the possible presence of the poles, but the
ability of locating the eigenvalues of $F(\lambda)$ thanks to the
associated tropical roots still holds true.

\begin{lemma}[{\cite[Lemma 2.6]{nosh:15}}]
\label{lem.key}
Given $c,\delta>0$ such that $\delta\le (1+2c)^{-2}$, the
quadratic polynomial
\[
        p(r) = r^2 -\bigg(2+\frac{1-\delta}{\delta(1+c)}\bigg)r+\frac{1}{\delta}
\]
has two real roots
\[
        f := f(\delta,c)=  \frac{(1+2c)\delta+1-\sqrt{(1-\delta)(1-(1+2c)^2\delta)}}
             {2\delta(1+c)},
        \qquad
        g = (\delta f)^{-1},
\]
with the properties that
\begin{enumerate}
\item[\rm(i)] $1<1+c\le f\le g$,
\item[\rm(ii)]
$\displaystyle{\frac{1}{f-1} + \frac{1}{g-1} = \frac{1}{c}}$.
\end{enumerate}
\end{lemma}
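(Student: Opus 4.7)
The plan is a direct computation, since $p(r)$ is an explicit quadratic with parameters $c,\delta$. First, I would expand and simplify the discriminant of $p(r)$ and verify that it equals $(1-\delta)(1-(1+2c)^2\delta)/[\delta^2(1+c)^2]$. Under the hypothesis $\delta \leq (1+2c)^{-2}$ (which forces $\delta \leq 1$ since $c \geq 0$), both factors in the numerator are nonnegative, so $p$ has two real roots. Applying the quadratic formula to the smaller root reproduces the stated expression for $f$; Vieta's product formula then gives $fg = 1/\delta$, hence $g = (\delta f)^{-1}$, while the ordering $f \leq g$ is obvious from the $\pm\sqrt{\Delta}$ structure.

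For the inequality $1+c \leq f$ in item (i), the idea is to evaluate $p$ at $r=1+c$: the linear term $-(1-\delta)(1+c)/[\delta(1+c)] = -(1-\delta)/\delta$ cancels neatly against $1/\delta$, leaving $p(1+c) = (1+c)^2 - 2(1+c) + 1 = c^2 \geq 0$. This tells me $1+c$ lies outside the open interval bounded by the two roots; to decide on which side, I would compare $1+c$ with the vertex $r_v = 1 + (1-\delta)/[2\delta(1+c)]$. The inequality $r_v \geq 1+c$ reduces to $\delta(1+2c+2c^2) \leq 1$, which follows from the hypothesis since $1+2c+2c^2 \leq (1+2c)^2$. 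Combined with $p(1+c) \geq 0$, this places $1+c$ to the left of both roots, yielding $1+c \leq f$ as required.

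Finally, item (ii) is pure Vieta. Setting $A := (1-\delta)/[\delta(1+c)]$, the sum and product of the roots give $f+g-2 = A$ and $fg - (f+g) + 1 = (1-\delta)/\delta - A$. Factoring $(1-\delta)/\delta = (1+c) A$ reveals that $fg - (f+g) + 1 = cA$, so
\[
  \frac{1}{f-1} + \frac{1}{g-1} = \frac{f+g-2}{fg-(f+g)+1} = \frac{A}{cA} = \frac{1}{c}.
\]
No step constitutes a genuine obstacle; the only mildly subtle point is the vertex comparison in item (i), which is exactly what pins down the role of the hypothesis $\delta \leq (1+2c)^{-2}$ (at the boundary $\delta = (1+2c)^{-2}$ one in fact obtains $f = g = 1+2c$, a consistency check I would perform at the end).
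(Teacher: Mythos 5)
Your proof is correct in every detail, and the paper itself does not supply one: Lemma~\ref{lem.key} is simply cited from \cite[Lemma~2.6]{nosh:15} without an in-text argument. So you have filled a gap rather than replicated an existing proof. Your computations all check out: the discriminant does simplify to $(1-\delta)(1-(1+2c)^2\delta)/[\delta^2(1+c)^2]$ (expanding gives $\delta^2(1+2c)^2 - \delta(2+4c+4c^2) + 1$ for the numerator of $B^2-4C$, matching $(1-\delta)(1-(1+2c)^2\delta)$), the evaluation $p(1+c)=c^2$ is exact, and the vertex comparison $1+c\le B/2$ is precisely where the hypothesis $\delta\le(1+2c)^{-2}$ enters via $1+2c+2c^2\le(1+2c)^2$. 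The Vieta computation for item (ii) is also clean, and your boundary sanity check $f=g=1+2c$ when $\delta=(1+2c)^{-2}$ is correct. This is the natural elementary route, and it is essentially the argument given in \cite{nosh:15}.
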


\begin{theorem}
  \label{thm:evsLoc}
  Let $\ell^{-} >-\infty$ and let
  $F(\lambda) = \sum_{j=\ell^{-}}^{\infty}B_{j}\lambda^{j}$ be a
  regular, meromorphic Laurent function, analytic in the open annulus
  $\Omega := \annop(\rr, \RR)$. For every $j \in \Z$, let
  $f_j = f(\delta_j,\cond(B_{k_j}))$, where $f(\delta,c)$ is defined
  as in Lemma \ref{lem.key}, and $g_j = (\delta_j f_j)^{-1}$. Then:
\begin{enumerate}
\item If $\delta_{j} \leq (1 +2 \cond(B_{k_{j}}))^{-2}$,
  then $F(\lambda)$ has exactly $nk_{j}$ eigenvalues minus poles inside the
  disk $\disk((1+2\cond(B_{k_j}))\alpha_{j})$ and it does not have any eigenvalue
  inside the open annulus $\annop((1+2\cond(B_{k_j}))\alpha_{j},
  (1+2\cond(B_{k_j}))^{-1}\alpha_{j+1})$. 
\item For any $j < s$,  if $\delta_{j} \leq (1 +2 \cond(B_{k_{j}}))^{-2}$ and
  $\delta_{s} \leq (1 +2 \cond(B_{k_{s}}))^{-2}$, then
  $F(\lambda)$ has exactly $n(k_{s}-k_{j})$ eigenvalues  inside
  the closed annulus $\ann((1+2\cond(B_{k_j}))^{-1}\alpha_{j+1}, (1+2\cond(B_{k_s}))\alpha_{s})$.
\end{enumerate}
\end{theorem}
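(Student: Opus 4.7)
The plan is to adapt the proof of \cite[Theorem~2.7]{nosh:15} to the Laurent setting, so the main machinery is Rouché's theorem (Theorem~\ref{thm:rouche}) applied to the scaled and split form $\Ft(\mu) = S(\mu) + Q(\mu)$ from~\eqref{eq:decompo}. For the first half of part~(1), I would apply Rouché with the $j$-th scaling $\lambda = \alpha_j \mu$ on the circle $|\mu| = 1 + 2\cond(B_{k_j})$. The ``large-$|\mu|$'' bound of Lemma~\ref{lem:genb} controls $\|S(\mu)^{-1}\|$, and the same lemma bounds $\|Q(\mu)\|$ by a two-term geometric sum; combining these, the required estimate $\|S(\mu)^{-1} Q(\mu)\| < 1$ reduces to the quadratic inequality $p(r) \geq 0$ from Lemma~\ref{lem.key} at $r = 1+2\cond(B_{k_j})$. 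Under the hypothesis $\delta_j \leq (1+2\cond(B_{k_j}))^{-2}$, the auxiliary quantities of Lemma~\ref{lem.key} satisfy $f_j \leq 1+2\cond(B_{k_j}) \leq g_j$, which makes the inequality valid. With Rouché in hand, Lemma~\ref{prop:eig} identifies the interior count: the polynomial part $S$ has $n(k_j-k_{j-1})$ nonzero eigenvalues, all strictly inside the circle, plus a factor $\mu^{n k_{j-1}}$ in $\det S$ contributing a signed $n k_{j-1}$ at $\mu = 0$, for a total of $n k_j$ zeros minus poles inside $\disk((1+2\cond(B_{k_j}))\alpha_j)$.

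For the ``no eigenvalues in the annulus'' half, I would repeat the argument using the $(j+1)$-th scaling $\lambda = \alpha_{j+1}\mu$ on the inner circle $|\mu| = (1+2\cond(B_{k_j}))^{-1}$. Here the new central polynomial has leftmost exponent $k_j$, so the ``small-$|\mu|$'' case of Lemma~\ref{lem:genb} again involves $\cond(B_{k_j})$; the symmetric form of Lemma~\ref{lem.key} once more delivers $\|S^{-1}Q\| < 1$. This time \emph{all} nonzero eigenvalues of the new $S$ lie outside the Rouché circle by Lemma~\ref{prop:eig}, so the count reduces to the $\mu = 0$ contribution $n k_j$. Since $F$ has poles only at $0$ (because $\ell^- > -\infty$), subtracting the two equal counts forces the open annulus to contain zero eigenvalues. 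Part~(2) is then immediate: the net counts $n k_j$ and $n k_s$ inside the disks $\disk((1+2\cond(B_{k_j}))^{-1}\alpha_{j+1})$ and $\disk((1+2\cond(B_{k_s}))\alpha_s)$ subtract to $n(k_s-k_j)$, with the ``no eigenvalues on the bounding circles'' conclusion from (1) ensuring the closed annulus is the correct domain.

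The main obstacle will be the boundary case $\delta_j = (1+2\cond(B_{k_j}))^{-2}$, where $f_j = g_j = 1+2\cond(B_{k_j})$ and the Rouché bound degenerates to $\|S^{-1}Q\| \leq 1$. Two observations rescue the argument: first, at this precise equality the ``annulus'' in the second claim degenerates to the empty set, since then $(1+2\cond(B_{k_j}))\alpha_j = (1+2\cond(B_{k_j}))^{-1}\alpha_{j+1}$, so that claim is vacuous; second, for the disk claim one may perturb the Rouché radius by continuity into the open interval $(f_j,g_j)$ when $\delta_j$ is strictly below the boundary, which preserves the open-disk count. A secondary, purely bookkeeping point is to verify that the bound for $\|Q(\mu)\|$ in Lemma~\ref{lem:genb} is valid on the chosen circles, i.e., $\delta_{j-1} < 1+2\cond(B_{k_j}) < 1/\delta_j$ in the $j$-th scaling (and analogously for the other scaling); both inequalities reduce to the standing hypothesis together with the isolation of tropical roots guaranteed by Lemma~\ref{lem:isolated}.
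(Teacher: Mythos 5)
Your argument for the disk count in part (1) follows the paper's template closely: the $j$-th scaling $\lambda = \alpha_j\mu$, the splitting $\Ft(\mu) = S(\mu) + Q(\mu)$ from \eqref{eq:decompo}, the bounds of Lemma~\ref{lem:genb}, the quadratic $p$ of Lemma~\ref{lem.key}, and Lemma~\ref{prop:eig} for the interior count. One small slip: the Rouché condition $\norm{S(\mu)^{-1}Q(\mu)} < 1$ reduces to $p(r) < 0$, not $p(r) \geq 0$ as you write; $p$ has positive leading coefficient, so it is negative precisely on the open interval $(f_j, g_j)$ between its roots, which is why admissible radii lie there. Where you genuinely diverge is in the annulus exclusion. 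You propose a second Rouché argument at the $(j+1)$-th scaling on an inner circle. The paper instead obtains the exclusion at no extra cost by running the first Rouché argument over the entire family of circles $\abs{\mu} = r$ with $r \in (f_j, g_j)$: since the resulting count of eigenvalues minus poles inside $\disk(r\alpha_j)$ is the constant $nk_j$ for all such $r$, and since $F$ has poles only at the origin, no eigenvalue can lie in $\annop(f_j\alpha_j, g_j\alpha_j)$. The stated annulus is then contained in this one because $f_j \leq 1+2\cond(B_{k_j})$ and $g_j = (\delta_j f_j)^{-1} \geq (1+2\cond(B_{k_j}))^{-1}\delta_j^{-1}$. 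Your second-scaling route reaches the same conclusion, but it repeats the whole Lemma~\ref{lem:genb}/Lemma~\ref{lem.key} machinery and re-encounters the same degenerate boundary case; the constancy observation is strictly cheaper and is worth internalizing. Your treatment of part (2) (subtracting the two counts) and your diagnosis of the boundary case $\delta_j = (1+2\cond(B_{k_j}))^{-2}$ are in line with the paper, whose proof likewise operates only on the open interval $(f_j, g_j)$, so that delicacy is not a defect specific to your attempt.
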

\begin{proof}
  As a preliminary step, note that for a fixed $c \geq 1$ and
  $\delta \leq (1+2c)^{-2}$, the function $f(\delta,c)$ of
  Lemma~\ref{lem.key} is increasing and attains its maximum, which is
  $1+2c$, at $\delta = (1+2c)^{-2}$.  Therefore
it holds  $f(\delta_{j}, \cond(B_{k_{j}})) \leq 1+2\cond(B_{k_j})$ for
  $\delta_{j}\leq (1+2\cond(B_{k_{j}}))^{-2}$.
\begin{enumerate}
\item  Assume  that $\delta_j\le(1+2\cond(B_{k_j}))^{-2}$ and
 partition $\Ft(\mu)$ as in \eqref{eq:decompo}.
Consider now $r$ such that
\begin{equation}
\label{def.r}
        1+\cond(B_{k_j})<r<1/\delta_j.
\end{equation}
Note that $r$ is well defined because
$\delta_j\leq(1+2\cond(B_{k_j}))^{-2}<(1+\cond(B_{k_j}))^{-1}$.
Thanks to Lemma~\ref{prop:eig}, we have that $S(\mu)$ is nonsingular
on the circle $\Gamma_r=\{\mu\in\C: \abs{\mu}=r\}$.  As soon as we can
apply Rouché's Theorem~\ref{thm:rouche} to $\Ft(\mu) = S(\mu)+Q(\mu)$
and $\Gamma_{r}$, we will have concluded. Therefore we have to check that
$\norm{S(\mu)^{-1}Q(\mu)}<1$ for all $\mu\in\Gamma_r$. Since
\begin{equation}\label{ineq.r0}
\delta_{j-1}<1<1+\kappa(B_{k_j})<r<1/\delta_j,
\end{equation}
we can simply apply the bounds retrieved in
Lemma~\ref{lem:genb}. It follows
\begin{align*}
   \norm{S(\mu)^{-1}Q(\mu)}
   &\le \norm{S(\mu)^{-1}}\norm{Q(\mu)}\\
   &\le \frac{r^{-k_j}(r-1)\cj}
       {r-1-\cj(1-r^{-{m_j}})}\left(\frac{{\delta_{j-1}} r^{k_{j-1}}}
   {r-{\delta_{j-1}}}+
   \frac{{\delta_{j}} r^{k_j+1}}{1-{\delta_{j}} r}\right).
\end{align*}
The latter bound is less than $1$ if
\begin{equation*}
   \frac{{\delta_{j-1}} r^{-{m_j}}}
   {r-{\delta_{j-1}}}+\frac{{\delta_{j}} r}{1-{\delta_{j}} r}
   <
   \frac{r-1-\cj(1-r^{-{m_j}})}{(r-1)\cj},
\end{equation*}
or equivalently, if
\begin{equation*}\label{eq:polyineq}
  \frac{{\delta_{j}} r}{1-{\delta_{j}} r}<
  \frac{r-1-\cj}{(r-1)\cj}+r^{-{m_j}}
  \left(\frac{1}{r-1} -\frac{\delta_{j-1}}{r-{\delta_{j-1}}}\right).
\end{equation*}
Since
$\frac{1}{r-1}>\frac{{\delta_{j-1}} }{r-{\delta_{j-1}}}$, the last inequality holds when
$\frac{{\delta_{j}} r}{1-{\delta_{j}} r}<\frac{r-1-\cj}{(r-1)\cj}$,
which is equivalent to  $p(r)<0$, where $p(x)$ is the polynomial
introduced in Lemma~\ref{lem.key} with
$\delta=\delta_j$ and $c=\cj$. Lemma~\ref{lem.key} assures us that
$p(r)$ is negative for the values of $r$ such that
\begin{equation}
  \label{ineq.r}
        f_j<r<g_j,
\end{equation}
given that $f_j$ and $g_j$ are the two roots of $p(x)$.  The same
lemma also tells us that $f_j\geq 1+\cj$ and $g_j\leq
(\delta_j)^{-1}$, hence~\eqref{ineq.r} is sharper than~\eqref{ineq.r0}.  
In particular, note that for any $\abs{\mu} = r$ 
where $r = f_j$ or $r = g_j$ the 
upper bound for $\norm{S(\mu)^{-1} Q(\mu)}$ is equal to $1$. 
Therefore, such $\mu$ belongs to the domain of analiticity, and 
we have $R_1 \leq f_j < g_j \leq R_2$. 
Finally, by Rouché's Theorem this implies that $S(\mu)$ and $\Ft(\mu)$
have the same number of eigenvalues minus poles, i.e., $nk_{j}$,
inside the disk $\disk(r)$ for any $r$ such that $f_{j} < r <
g_{j}$. Furthermore, there are no eigenvalues in the open annulus
$\annop(f_{j}, g_{j})$, because the quantity of eigenvalues minus
poles is constant and there cannot be poles in $\annop(f_{j},
g_{j})$. The thesis then follows from the scaling
$\lambda = \mu\alpha_{j}$ and from the preliminary point.
\item By the proof of the previous item, if $\delta_{j} \leq (1 +2 \cond(B_{k_{j}}))^{-2}$ then
  $F(\lambda)$ has $nk_{j}$ eigenvalues minus poles inside
  $D((1+2\cond(B_{k_{j}}))\alpha_{j})$ and no eigenvalues in
  $\annop((1+2\cond(B_{k_j}))\alpha_{j},
  (1+2\cond(B_{k_j}))^{-1}\alpha_{j+1})$. A similar statement holds
  if $\delta_{s} \leq (1 +2 \cond(B_{k_{s}}))^{-2}$. Given that poles
  cannot lie there, this implies that $F(\lambda)$ has exactly
  $n(k_{s} - k_{j})$ eigenvalues inside the closed  annulus
  $\ann((1+2\cond(B_{k_j}))^{-1}\alpha_{j+1},
  (1+2\cond(B_{k_s}))\alpha_{s})$. \qedhere
\end{enumerate}
\end{proof}

When the non-zero coefficients with negative (resp. positive)
indices are a finite number, we may also find an exclusion (resp. inclusion) 
disc centered at zero. This may be seen as a generalization of 
Lemma~\ref{prop:eigall}. 

\begin{theorem} \label{thm:lower-upper}
  Let $F(\lambda) = \sum_{j=\ell^-}^{\ell^+} B_j \lambda^j$ 
  be a (classic) Laurent series. 
  When $\ell^- > -\infty$ and 
  $B_{\ell^-}$ is non singular, then $F(\lambda)$ has $\ell^- n$ eigenvalues 
  minus poles at zero, and the other eigenvalues of $F(\lambda)$ satisfy
  \[
    \left(1+\cond(B_{\ell^-})\right)^{-1}\alpha_1 \leq \abs{\lambda}
  \]
  Similarly, if $\ell^+ < \infty$ and $B_{\ell^+}$ is non singular, 
  then  every eigenvalue satisfies
  \[
    \abs{\lambda}\leq \left(1+\cond(B_{\ell^-})\right)\alpha_q, 
  \]
  where $\alpha_q$ is the maximum tropical root. 
\end{theorem}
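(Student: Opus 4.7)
My plan is to handle the two bounds by dual Rouché-style arguments, closely mimicking the estimates already used in the proof of Theorem~\ref{thm:evsLoc} but specialising to the extreme segments of the Newton polygon rather than to a generic interior segment.

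For the lower bound, I will split $F(\lambda) = S(\lambda) + Q(\lambda)$ with $S(\lambda) = B_{\ell^-}\lambda^{\ell^-}$ and $Q(\lambda) = \sum_{j > \ell^-} B_j \lambda^j$, and apply Theorem~\ref{thm:rouche} on a circle $\abs{\lambda} = r$ with $r < \alpha_1$. Because $\det S(\lambda) = \det(B_{\ell^-})\lambda^{n\ell^-}$ has exactly $n\ell^-$ zeros minus poles at the origin and none elsewhere, and because factoring $F(\lambda) = \lambda^{\ell^-}\bigl(B_{\ell^-} + \lambda\,G(\lambda)\bigr)$ with $B_{\ell^-}$ invertible shows that $\det F$ contributes exactly the same count at zero, a successful application of Rouché will force \emph{no other} eigenvalues of $F$ inside $\abs{\lambda}<r$. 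The key tropical input is that under our hypotheses $(\ell^-,\log\norm{B_{\ell^-}})$ is automatically the leftmost vertex of the Newton polygon, so $\tropx F(x) = \norm{B_{\ell^-}}x^{\ell^-}$ on $(0,\alpha_1]$ and hence
\[
    \norm{B_j}\,\alpha_1^{\,j-\ell^-} \le \norm{B_{\ell^-}}, \qquad j>\ell^-.
\]
Parametrising $\abs{\lambda}=t\alpha_1$ with $t\in(0,1)$, this inequality collapses $\norm{S(\lambda)^{-1}Q(\lambda)}$ into a geometric series bound $\cond(B_{\ell^-})\sum_{k\ge1}t^k = \cond(B_{\ell^-})\,t/(1-t)$, which is strictly less than one precisely for $t<(1+\cond(B_{\ell^-}))^{-1}$. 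Rouché then yields the claimed lower bound.

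For the upper bound --- where I read the $\cond(B_{\ell^-})$ in the statement as a typo for $\cond(B_{\ell^+})$ --- I will argue directly that $F(\lambda)$ is invertible whenever $\abs{\lambda}>(1+\cond(B_{\ell^+}))\alpha_q$, rather than invoke Rouché on an unbounded region. Writing
\[
    F(\lambda) = B_{\ell^+}\lambda^{\ell^+}\Bigl(I + B_{\ell^+}^{-1}\!\sum_{j<\ell^+}\!B_j\lambda^{j-\ell^+}\Bigr),
\]
the dual tropical identity $\tropx F(x) = \norm{B_{\ell^+}}x^{\ell^+}$ on $[\alpha_q,+\infty)$ gives $\norm{B_j}\alpha_q^{j-\ell^+}\le\norm{B_{\ell^+}}$ for all $j<\ell^+$. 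Setting $\abs{\lambda}=r\alpha_q$ with $r>1$, the same geometric majorisation bounds the norm of the inner correction by $\cond(B_{\ell^+})/(r-1)$, which drops below $1$ exactly when $r>1+\cond(B_{\ell^+})$, so $I+(\cdots)$ is invertible and hence so is $F(\lambda)$. I do not expect any real obstacle: convergence of the inner series for the relevant $\lambda$ is automatic because $\ell^+<\infty$ forces $\RR=+\infty$ and $(1+\cond(B_{\ell^+}))\alpha_q\ge\rr$ keeps $\lambda$ inside $\Omega$, while the bookkeeping for ``zeros minus poles at zero'' is the only mildly subtle step on the lower bound side. The whole argument is essentially a clean limiting case of Theorem~\ref{thm:evsLoc}, where one of the neighbouring segments has disappeared.
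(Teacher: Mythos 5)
Your proof is correct, and it takes a genuinely different route from the paper's on the lower-bound side. For the upper bound (after the typo fix $\cond(B_{\ell^-}) \to \cond(B_{\ell^+})$, which you correctly flag), your Neumann-series invertibility argument and the paper's argument are the same estimate packaged differently: the paper fixes a putative normalised eigenvector $x$ and shows $\norm{F(\lambda)x} > 0$ by peeling off the leading monomial $B_{\ell^+}\lambda^{\ell^+}$ and bounding the tail geometrically via the tropical inequality $\norm{B_i}\le\alpha_q^{\ell^+ - i}\norm{B_{\ell^+}}$, whereas you factor $F(\lambda)=B_{\ell^+}\lambda^{\ell^+}(I+\cdots)$ and show the correction has norm $<1$; both hinge on the same key inequality and give the same threshold. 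The real divergence is in the lower bound: the paper gets it \emph{for free} by factoring $F(\lambda)=\lambda^{\ell^-}P(\lambda)$ to read off the $n\ell^-$ zeros-minus-poles at the origin, and then applying the already-proved upper bound to $F(1/\lambda)$; you instead run an independent Rouché argument on $\Gamma_r$ with $S(\lambda)=B_{\ell^-}\lambda^{\ell^-}$ and $Q = F - S$, using the dual tropical inequality $\norm{B_j}\alpha_1^{\,j-\ell^-}\le\norm{B_{\ell^-}}$ (valid because $(\ell^-,\log\norm{B_{\ell^-}})$ is the leftmost vertex of the Newton polygon) to collapse $\norm{S^{-1}Q}$ into $\cond(B_{\ell^-})\,t/(1-t)$. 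Your route is somewhat longer but it is self-contained, gives the zero/pole count and the exclusion disc in one sweep, and is closer in spirit to the Rouché machinery already deployed in Theorem~\ref{thm:evsLoc}; the paper's inversion trick is slicker but exploits the $\lambda\mapsto 1/\lambda$ symmetry of the Laurent setting, which your argument does not need. Both are sound.
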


\begin{proof}
  The proof follows the ideas in \cite[Lemma 4.1]{higham2003bounds}. 
  We consider the 
  case $\ell^+ < \infty$ first. The definition of $\alpha_q$ implies 
  that:
  \[
    \norm{B_i} \leq \alpha_q^{\ell^+-i} \norm{B_{\ell^+}}, \qquad i \leq \ell^+. 
  \]
  Assume for a contradiction that there is an eigenvalue 
  $|\lambda| > \left(1+\cond(B_{\ell^-})\right)\alpha_q$. Then, we 
  may consider a normalized eigenvector $\norm{x} = 1$, and write 
  \begin{align*}
    \norm{F(\lambda) x} &\geq \norm{B_{\ell^+} |\lambda|^{\ell^+} x}  -
      \sum_{i < \ell^+} \norm{B_i} |\lambda|^i  \\
      &\geq |\lambda|^{\ell^+} \left( 
        \norm{B_{\ell^+}^{-1}}^{-1} 
        - \sum_{i < k} \norm{B_i} |\lambda|^{i - \ell^+} 
      \right) \\
      &\geq |\lambda|^{\ell^+} \left( 
        \norm{B_{\ell^+}^{-1}}^{-1} 
        - \sum_{i < k} \norm{B_{\ell^+}} \alpha_q^{\ell^+ - i}
          |\lambda|^{i - \ell^+} 
      \right) \\
      &= |\lambda|^{\ell^+} \norm{B_{\ell^+}^{-1}}^{-1} 
        \left(
          1 - \kappa(B_{\ell^+}) \sum_{i < k} 
            \left[ \frac{\alpha_q}{|\lambda|} \right]^{\ell^+ - i}
        \right) \\
      &= |\lambda|^{\ell^+} \norm{B_{\ell^+}^{-1}}^{-1} 
      \left(
        1 - \kappa(B_{\ell^+}) 
        \frac{\alpha_q}{|\lambda| - \alpha_q}
      \right) > 0
  \end{align*}
  where the last inequality follows assuming 
  that $|\lambda| > \left(1+\cond(B_{\ell^-})\right)\alpha_q$, 
  as we did. Hence, we have the sought 
  upper bound for $|\lambda|$. 

  If we have $\ell^- > -\infty$ and $B_{\ell^-}$ nonsingular, we can 
  rewrite $F(\lambda)$ as 
  \[
    F(\lambda) = \lambda^{\ell^-} P(\lambda), 
  \]
  where $P(\lambda)$ is a Taylor series (or matrix polynomial) with 
  $\det P(0) \neq 0$. Hence, we conclude that $F(\lambda)$ has an
  eigenvalue (or pole, depending on the sign of $\ell^-$) of the desired algebraic 
  multiplicity. Applying the above reasoning to $F(\frac 1\lambda)$ yields 
  the lower bound for the remaining eigenvalues. 
\end{proof}

\subsection{Practical computation of the Newton polygon}
\label{sec:practical-computation}
Given a tropical Laurent series, computing its Newton polygon may 
be challenging, as the customary Graham Scan algorithm \cite{gr:72}
could require an infinite number of comparisons. 

We now present two strategies for addressing this problem.  First, we
may assume that the norms of the coefficients $B_j$ can be computed
easily and cheaply (which is indeed true, for example, for the $1$ and
the $\infty$ operator norms) and we show that the finite truncation of
the Newton polygon ``converge'' (in an appropriate sense) to the
infinite one. Second, we consider the case where the Laurent series is
the modification of a known function, of which only a finite number of
coefficients are altered. This is not unrealistic: for example, in the
context of delay differential equations it is typical to deal with
functions that are polynomials in both $\lambda$ and
$e^\lambda$~\cite{ja:11, jami:10}.

\begin{theorem}[Truncation of the Newton polygon]
\label{thm:truncation}
Let $F \colon \Omega \to \Cnn$ be a Laurent function on $\Omega =
\annop{(\rr,\RR)}$ and let $\tropx F(x)$ be its tropicalization. Let
$\tropx F_{d}(x) = \bigoplus_{0\leq k \leq d}\norm{B_{k}}x^{k}$ be any
``right-finite'' truncation and let $C_2 >0$ be a constant such that
$\norm{B_{k}} < C_2\RR^{-k}$ for any $k > 0$. Then for any two
consecutive indices $i, j \in \newti{\tropx F_{d}}$ such that $\norm{B_{j}}
\geq \norm{B_{i}}\RR^{-(j-i)}$, let $\ell_{2}$ be defined as
\[
  \ell_{2} := {\frac{(j-i)\log C +i\log\norm{B_{j}}
      -j\log\norm{B_{i}}}{(j-i)\log\RR + \log\norm{B_{j}} -
      \log\norm{B_{i}}}}
\]
and suppose moreover that for all $j+1 \leq k \leq \ell_{2}$ it holds
\begin{equation}
\label{eq:thm:truncation}
  \norm{B_{k}} \leq \norm{B_{j}}^{\frac{k-i}{j-i}}\norm{B_{i}}^{-\frac{k-j}{j-i}}.
\end{equation}
Then $i$, $j$ are also consecutive indices in $\newti{\tropx
  F}$.

Similarly, let $\tropx F_{-d}(x) = \bigoplus_{0\leq k \leq
  d}\norm{B_{-k}}x^{-k}$ be a ``left-finite'' truncation and let $C_1 >0$ be a constant such that
$\norm{B_{-k}} < C_1\rr^{k}$ for any $k > 0$. Then for any two
consecutive indices $ -i, -j \in \newti{\tropx F_{-d}}$ such that $\norm{B_{-j}}
\geq \norm{B_{-i}}\rr^{j-i}$, let $\ell_{1}$ be defined as
\[
  \ell_{1} := {\frac{(j-i)\log C_{1} +i\log\norm{B_{-j}}
      -j\log\norm{B_{-i}}}{(i-j)\log\rr + \log\norm{B_{-j}} -
      \log\norm{B_{-i}}}}
\]
and suppose moreover that for all $j+1 \leq k \leq \ell_{1}$ it holds
\begin{equation*}
  \norm{B_{-k}} \leq \norm{B_{-j}}^{\frac{k-i}{j-i}}\norm{B_{-i}}^{-\frac{k-j}{j-i}}.
\end{equation*}
Then $-i$, $-j$ are also consecutive indices in $\newti{\tropx
  F}$.
\end{theorem}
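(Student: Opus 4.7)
The plan is to show that, once $i$ and $j$ are consecutive vertices of $\newti{\tropx F_d}$, no excluded index $k > j$ can create a point $(k, \log\norm{B_k})$ lying above the line through $(i, \log\norm{B_i})$ and $(j, \log\norm{B_j})$; since the truncation already accounts for all indices $k \in (i,j)$, this control of the right tail suffices to conclude that the segment between these two points is still an edge of the upper convex hull $\newt{\tropx F}$, and hence that $i, j$ remain consecutive in $\newti{\tropx F}$.

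The first step is to read \eqref{eq:thm:truncation} geometrically. Parameterizing the line through $(i,\log\norm{B_i})$ and $(j,\log\norm{B_j})$ at abscissa $k$ gives the value $\log\norm{B_i} + \tfrac{k-i}{j-i}(\log\norm{B_j} - \log\norm{B_i})$, and exponentiating produces exactly the right-hand side of \eqref{eq:thm:truncation}. Thus the hypothesis is precisely the statement ``$(k, \log\norm{B_k})$ lies on or below the extension of the segment $ij$,'' and it directly handles the range $j+1 \leq k \leq \ell_2$.

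For $k > \ell_2$ I would instead exploit the asymptotic bound $\norm{B_k} < C_2 \RR^{-k}$. The formula for $\ell_2$ is obtained by setting $\log C_2 - k\log\RR$ equal to the line value at $k$ and solving for $k$; equivalently, $\ell_2$ is the abscissa at which the envelope $C_2\RR^{-k}$ meets the extended line in logarithmic scale. The assumption $\norm{B_j} \geq \norm{B_i}\RR^{-(j-i)}$ is exactly the statement that the denominator $(j-i)\log\RR + \log\norm{B_j} - \log\norm{B_i}$ is non-negative, so (in the strict case) $k > \ell_2$ is equivalent to $C_2\RR^{-k}$ lying strictly below the line, which combined with the envelope bound forces the same inequality on $\norm{B_k}$. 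The boundary case where the denominator vanishes would require a separate small argument such as imposing $C_2 \leq \norm{B_i}\RR^i$.

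Putting the two ranges together shows that no index $k > j$ lifts above the segment $ij$ in the log-plot, so that segment is an edge of the infinite upper hull. The left-finite half of the theorem follows by the same reasoning applied to $F(1/\lambda)$, which swaps $\rr$ with $\RR$ and positive with negative indices. The main delicate point is verifying that $\ell_2$ is the correct crossover abscissa and that the denominator carries the right sign; once the geometric picture of a line meeting a decaying envelope is in place, the remainder of the argument is a straightforward convex-analytic verification.
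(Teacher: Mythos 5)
Your proposal is correct and follows the same approach as the paper: both compare the line $L_2$ through $(i,\log\norm{B_i})$, $(j,\log\norm{B_j})$ against the decaying envelope $L_1 \colon y = \log C_2 - x\log\RR$, observe that hypothesis~\eqref{eq:thm:truncation} handles the finite range $j+1 \leq k \leq \ell_2$, identify $\ell_2$ as the crossover abscissa of $L_1$ and $L_2$, and use the slope condition $\norm{B_j} \geq \norm{B_i}\RR^{-(j-i)}$ to conclude that $L_1$ lies below $L_2$ for $k \geq \ell_2$. Your remark that the degenerate parallel case (vanishing denominator, hence undefined $\ell_2$) needs a side argument is a fair caveat that the paper's proof also passes over silently, but it does not change the fact that your argument and the paper's coincide.
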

\begin{proof}
  Observe that for $k > 0$ the points $(k, \log \norm{B_{k}})$ lie below the line
  \[
    L_{1}\colon y = \log C_{2}- x\log\RR,
  \]
  due to the fact that $\log{\norm{B_{k}}} \leq C_{2}\RR^{-k}$. In addition,
  let
  \[
    L_{2} \colon y = \log{\norm{B_{j}}} +
    (x-j)\frac{\log{\norm{B_{j}}} - \log{\norm{B_{i}}}}{j-i}
  \]
  be the line containing the segment between $(i, \log{\norm{B_{i}}})$
  and $(j, \log{\norm{B_{j}}})$. Now we want to prove that if this
  segment belongs to $\newt{\tropx F_{d}}$, then it belongs to
  $\newt{\tropx F}$. In order to do that, we have to show that all the
  other points $(k, \log{\norm{B_{k}}})$ lie below $L_{2}$. If
  $j+1 \leq k \leq \ell_{2}$, then this condition is equivalent
  to~\eqref{eq:thm:truncation}. On the other hand, if $k \geq \ell_{2}$,
  then we can prove that $L_{2}$ lies above $L_{1}$, and therefore
  above $(k, \log{\norm{B_{k}}})$. We can do this by pointing out that
  \[
L_{2}(k) -L_{1}(k) \geq L_{2}(\ell_2) - L_{1}(\ell_2) =0.
\]
   Indeed, for the second equality we have
  \[
    \begin{split}
  (j-i)(L_{2}(\ell_2)-L_{1}(\ell_2)) &=  (j-i)\log\norm{B_{j}}  +(\ell_2-j)(\log \norm{B_{j}} -
  \log\norm{B_{i}}) \\
  &+ (i-j)\log C+ \ell_2(j-i) \log\RR\\
  &=(i-j)\log C + (j-\ell_2)\log\norm{B_{i}} \\
  &+(\ell_2-i)\log
   \norm{B_{j}}  +\ell_2(j-i)\log \RR\\
   &= (i-j)\log C  +j\log\norm{B_{i}} -i\log\norm{B_{j}}\\
   &+\ell_2((j-i)\log\RR +\log\norm{B_{j}}-\log\norm{B_{i}}) = 0
  \end{split}
\]
by the definition of $\ell_2$. For the first inequality, it is
sufficient to prove that the slope of $L_{2}$ is larger
than the slope of $L_{1}$. 
This is equivalent to 
\[
  -(j-i)\log \RR \leq \log \norm{B_{j}} - \log\norm{B_{i}}
\]
which is true because we assumed $\norm{B_{j}} \geq
\norm{B_{i}}\RR^{-(j-i)}$. The proof for the left truncation is
identical, providing changing the slope of $L_{1}$ from
$-\log\RR$ to $\log\rr$.
\end{proof}

We now consider a different situation, where we assume to be given the
tropical roots $(\alpha_j)_{j\in T}$ of some tropical Laurent series
for $\tropx f(x)$, and we want to determine the tropical roots of the
modified function $\tropx g(x) := \tropx(f(x) + p(x))$, where
$\tropx p(x)$ is a Laurent polynomial.

An example of this form can be constructed by considering
$\tropx g(x) = \tropx (\eu^{x} + p(x)$); the Newton polygon of the
exponential function can be described explicitly by computing the
convex hull of the nodes $(j, \log b_j)$, for $b_j = \frac{1}{j!}$.
The polynomial $\tropx p(x)$ only modifies a finite number of nodes in
the tropical roots. A natural question is whether it is possible to
determine the new tropical roots
$(\widehat\alpha_j)_{j\in \widehat{T}}$ with a finite number of
comparisons. 


The most natural way to address the problem is to generalize 
the Graham Scan algorithm \cite{gr:72}. To simplify the description, 
we may assume that $p(\lambda) = \gamma$ is a constant. Indeed, 
the case $p(\lambda) = \gamma \lambda^j$ can be dealt with by shifting the 
Newton polygon to the left of $j$ positions, and the general 
case can be seen as a composition of a finite number of 
updates with monomials of this form. 

Our objective is, given $I$ the set of non-zero indices of the 
Laurent series as in 
Definition~\ref{def:I}, and the mapping $j \mapsto b_j$, 
to construct the modified index set $\hat I$ that contains the 
nodes of the updated upper convex hull. As we will prove, 
this new set may be expressed in two ways:
\begin{itemize}
  \item We may have that $I$ and $\hat I$ only differ by a 
    finite number of elements; hence, we can express $\hat I$ 
    by listing such modifications. 
  \item The set $\hat I$ can be obtained from $I$ by dropping 
    all the indices to the right (resp. to the left) of 
    $0$, and adding $0$. 
\end{itemize}
Intuitively, we may proceed 
as follows:
\begin{itemize}
  \item If $\log \gamma$ is below the segment defining the 
    Newton polygon in $0$, we stop, and 
    we do not include $(0, \log \gamma)$ in the nodes.
    Otherwise, we include 
    $0$ as a node in $\widehat{I}$, and move to the next point. 
  \item Starting from $i = 1$,
    we compare the slope of the segment connecting 
    $(0, \log \gamma)$ to $(j_i, \log b_{j_i})$, with 
    $(j_{i+1}, \log b_{j_{i+1}})$. 
    If the latter slope is smaller, or $j_i$ is 
    the last index in $I$, then we stop.       
  \item Otherwise, we remove the index $j_i$ from $I$, 
      and consider the next indices in the previous bullet point. 
\end{itemize}
The above procedure deals with the points 
on the right of $(0, \log \gamma)$, and the same algorithm
must be repeated for the indices on the left. 

As long as $I$ only comprises a finite number of points, 
the previous algorithm is feasible, and terminates within 
$\mathcal O(\# I)$ comparisons. The new tropical polygon 
can be described by a set of indices $\hat I$, obtained with a
finite number of additions and/or removals. However, if the cardinality 
of $I$ is infinite, in general this algorithm may not terminate, and 
could require to perform an infinite number of comparisons. 

\begin{example} \label{ex:upd-1}
  Consider the tropical series associated with the Laurent
  series
  \[
    f(\lambda) = \sum_{j = 1}^\infty \eu^{\frac{j-1}{j}} \lambda^j. 
  \]
  Clearly, the nodes of the Newton polygon are given by 
  $(j, 1 - \frac{1}{j})$; it is easy to verify that all 
  these nodes belong to the upper convex hull, as illustrated in 
  Figure~\ref{fig:ex-upd-1}; the convex hull is represented 
  by the dashed red line. 
  If we modify this function by considering 
  \[
    g(\lambda) = \eu + f(\lambda), 
  \]
  then we need to add the node $(0, 1)$ to the convex hull. 
  If we connect this node to $(j, 1 - \frac{1}{j})$ we obtain a 
  sequence of lines of increasing slope, that converges to 
  the horizontal line $y = 1$. However, the Graham Scan algorithm 
  does not terminate in finite time. The final convex hull 
  correspond to $\{ y \leq 1 \}$, and is depicted by the blue 
  solid line. The lines considered by the Graham Scan algorithm 
  are reported in dotted green. 
\end{example}

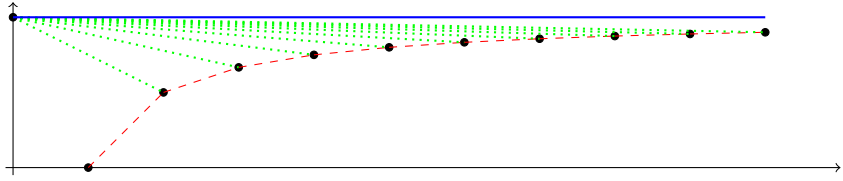
\begin{figure}
  \begin{tikzpicture}
    \draw[thin,->] (0,-.1) -- (0,2.2);
    \draw[thin,->] (-.1,0) -- (11,0);
    \filldraw (1, 0) circle (0.05);
    \foreach \j in {2, ..., 10} {
      \filldraw (\j, 2 - 2/\j) circle (0.05);
      \draw[red,dashed] (\j-1,{2-2/(\j-1)}) -- (\j, 2 - 2/\j);
      \draw[thick, green,dotted] (0,2) --  (\j, 2 - 2/\j);
    }
    \filldraw (0, 2) circle (0.05);
    \draw[blue, thick] (0,2) -- (10, 2);
  \end{tikzpicture}
  \caption{Lines considered by the Graham Scan algorithm in
     Example~\ref{ex:upd-1}. The algorithm does not terminate 
    within a finite number of slope comparisons.}
  \label{fig:ex-upd-1}
\end{figure}

We remark that, for this example, the final set $\hat I$ 
is easy to describe: it is composed only by the index $0$, 
which corresponds to a tropical root of infinite multiplicity. 

It turns out that the case described in the example is 
essentially the only one that can ``go wrong''. If we detect 
this situation early, we may adopt the standard Graham Scan procedure 
for all other cases. This is 
guaranteed by the following result. 

\begin{theorem} \label{thm:gs}
  Let $(j, \log b_j)$ be the nodes of a Newton polygon, 
  $j \in I \subseteq \mathbb Z$; similarly, 
  let $\hat I$ be the set of indices corresponding to 
  vertices of the Newton polygon for 
  \[
    \hat b_{j} = \begin{cases}
      \gamma & \text{if } {j = 0} \\
      b_j & \text{otherwise}
    \end{cases}.
  \]
  Then, if any of the following 
  conditions hold, the Graham Scan algorithm applied 
  to the right of $0$ terminates 
  within a finite number of steps. 
  \begin{enumerate}
    \item[(i)] $\alpha_\infty = \infty$
    \item[(ii)] $\alpha_\infty < \infty$ and 
      $
        \limsup_{j \in I} \left[ 
          \log(b_j) - j \log \alpha_\infty
        \right] > 
        \log\gamma. 
      $
  \end{enumerate}
  If instead $\alpha_\infty < \infty$, but 
  condition $(ii)$ is not satisfied, then $\hat I \cap \{ j \geq 0 \} = \{ 0 \}$, 
  and the root at zero has infinite multiplicity. 
\end{theorem}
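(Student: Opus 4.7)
The plan is to analyse when the Graham-scan procedure terminates by tracking, at step $i$, the slope $\sigma_i := (\log b_{j_i} - \log \gamma)/j_i$ of the chord from the new node $(0, \log \gamma)$ to the $i$-th original vertex $(j_i, \log b_{j_i})$, and by comparing it with the slope $\tau_i$ of the $i$-th segment of the original Newton polygon. By the upper-convex-hull property, the sequence $(\tau_i)$ is strictly decreasing and, when infinitely many vertices are present, it converges to $-\log\alpha_\infty$ by Lemma~\ref{lem:newtonIsolated}. The scan halts at step $i$ precisely when $\sigma_i \geq \tau_i$.

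The first ingredient is the monotonicity identity
\[
  \sigma_{i+1} - \sigma_i = \frac{(\tau_i - \sigma_i)(j_{i+1}-j_i)}{j_{i+1}},
\]
obtained by substituting $\log b_{j_{i+1}} = \log b_{j_i} + \tau_i (j_{i+1}-j_i)$ into the definition of $\sigma_{i+1}$. In particular, if the scan never halts (equivalently, $\sigma_i < \tau_i$ for every $i$), then $(\sigma_i)$ is strictly increasing.

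For case (i), $\alpha_\infty = +\infty$, I distinguish two subcases. If the set of vertices to the right of $0$ is already finite, which by Proposition~\ref{prop:nRoots} occurs exactly when the largest tropical root has finite multiplicity, then the algorithm halts trivially. Otherwise $\tau_i \to -\infty$; but any non-terminating run would force $(\sigma_i)$ to be strictly increasing, and hence bounded below by $\sigma_1$, so eventually $\tau_i < \sigma_1 \leq \sigma_i$, contradicting $\sigma_i < \tau_i$.

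For case (ii), $\alpha_\infty < +\infty$, I would instead track the $y$-intercept $Y_i := \log b_{j_i} - \tau_i j_i$ of the line extending the $i$-th segment, noting the equivalence $\sigma_i \geq \tau_i \Leftrightarrow Y_i \geq \log \gamma$. A short computation gives $Y_{i+1} - Y_i = (\tau_i - \tau_{i+1})\,j_{i+1} > 0$, so $(Y_i)$ is strictly increasing to a limit $Y_\infty$. Because every line extending a segment is a global upper bound for the polygon, one has $\log b_j \leq Y_i + \tau_i j$ for all $j \in I$ and $i$; passing $i \to \infty$ and comparing with the evaluation $Y_i = \log b_{j_i} - \tau_i j_i$ at the vertices identifies $Y_\infty$ with the supremum over $j \in I$ of $\log b_j$ evaluated at the limiting slope, i.e.\ with the quantity appearing in the hypothesis of (ii). The condition (ii) thus reads $Y_\infty > \log\gamma$, whence $Y_i > \log\gamma$ eventually and the scan halts. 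If, on the contrary, (ii) fails, then $Y_i < Y_\infty \leq \log\gamma$ at every step, so $\sigma_i < \tau_i$ always, and every original vertex to the right of $0$ is discarded; what remains is the ray based at $(0, \log\gamma)$ with limiting slope $-\log\alpha_\infty$, giving $\hat I \cap \{j \geq 0\} = \{0\}$ with the tropical root at $\alpha_\infty$ of infinite multiplicity. The most delicate point, and the one I expect to be the main obstacle, is the identification of $Y_\infty$ with the supremum featuring in (ii): it combines the geometric fact that the tangent line at each segment sits above the whole polygon with a monotone convergence argument in the slope parameter, and requires careful bookkeeping of the sign conventions relating tropical roots to Newton-polygon slopes.
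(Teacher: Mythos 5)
Your proof is correct and follows essentially the same strategy as the paper's: monotonicity of the chord slopes $\sigma_i$ (the paper's $s_i$) while the scan runs, combined with the limiting behaviour of the original segment slopes $\tau_i$ (the paper's $t_i$). For case (i) the two arguments are identical. The one genuine difference is in case (ii): the paper first rescales so that $\log\alpha_\infty = 0$ and then compares $\log\gamma$ with $\xi := \lim_{j\to\infty}\log b_j$, whereas you avoid the normalization and instead track the $y$-intercepts $Y_i$ of the supporting lines, proving they increase to a limit $Y_\infty$ which you identify with the limsup in condition (ii); under the paper's normalization your $Y_\infty$ is precisely $\xi$, so the two formulations are equivalent, and yours has the mild benefit of recovering hypothesis (ii) verbatim with no change of variable. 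Finally, the identification of $Y_\infty$ which you flag as the chief obstacle is in fact routine and requires no delicate bookkeeping: since each supporting line majorizes the whole polygon, $\log b_j \leq Y_i + \tau_i j$ for all $j \in I$ and all $i$, and letting $i\to\infty$ gives $\limsup_{j}\bigl(\log b_j - j\log\alpha_\infty\bigr) \leq Y_\infty$; conversely, evaluating at the vertex $j_i$ and using $\tau_i > -\log\alpha_\infty$ yields $\log b_{j_i} - j_i\log\alpha_\infty = Y_i + (\tau_i + \log\alpha_\infty)j_i > Y_i$, which together with $Y_i \uparrow Y_\infty$ gives the reverse inequality.
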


\begin{proof}
  Let us assume that $\alpha_\infty = \infty$. If there is a finite 
  number of tropical roots, the result is clearly true. Otherwise, 
  $\alpha_\infty = \infty$ implies that the tropical roots are 
  unbounded. In particular, 
  the Graham Scan algorithm proceeds comparing the slopes defined by 
  the sequences:
  \[
    s_i := 
      \frac{\log b_{j_i} - \log \gamma}{j_i}, \qquad 
    t_i := \frac{\log b_{j_{i+1}} - \log b_{j}}{j_{i+1} - j_i},
  \]
  and stops as soon as $s_i \geq t_i$. We note that, as long 
  as this condition does not hold, the sequence $s_i$ is 
  increasing, since the ``next'' node $(j_{i+1}, \log b_{j_{i+1}})$
  is above the line passing 
  through $(0, \log \gamma)$ and $(j_i, \log b_{j_i})$. 
  This situation is visible, for instance, 
  in Figure~\ref{fig:ex-upd-1}, where none of the lines satisfy 
  the condition, and hence the slopes keep increasing. 
  In contrast, 
  the sequence $t_i$ converges to $-\infty$. Hence, there exists a 
  finite index $i$ where $s_i \geq t_i$, and 
  where the condition is satisfied. 

  It remains to consider the case $\alpha_\infty < \infty$. 
  In this setting, it is not restrictive to assume that 
  $\log \alpha_\infty = 0$, and therefore $\alpha_\infty = 1$: 
  we just need to modify the function $\tropx f(x)$
  by scaling the variable as $\tropx f(\alpha_\infty^{-1} x)$, 
  which yields the Laurent coefficients $b_j \alpha_\infty^{-j}$. 

  With this choice, the slope of the segments in the tropical roots 
  converges to $0$, and the plot is ``asymptotically flat''. 
  In addition, all the slopes are non-negative, and therefore 
  the sequence of $\log b_j$ are non-decreasing. 
  We now define $\xi := \lim_{j \to \infty} \log b_j$, 
  and we distinguish 
  two cases, one where $\xi > \log \gamma$, and the other 
  where $\xi \leq \log \gamma$. 

  If $\xi >\log \gamma$, then there must be at least one point 
  $(j, \log b_j)$ such that the slope of the segment connecting 
  $(0, \log \gamma)$ to it is strictly positive, and this 
  also holds for all the following points, since the 
  $b_j$ are non-decreasing. We can now use the same argument 
  as before: the sequences of slopes $s_i$ and $t_i$ are 
  such that $s_i$ is non-decreasing as long as the 
  stopping condition is not satisfied, and $t_i \to 0$. Hence, 
  the algorithm terminates in a finite number of steps. 

  Otherwise, if $\xi \leq \log \gamma$, the horizontal line starting
  from $(0, \log \gamma)$ is above all other points, but any other
  line passing through the same point and with negative slope
  necessarily intersects the previous Newton polygon. Hence, $0$ is a
  tropical root of infinite multiplicity, and all nodes with positive
  indices need to be removed. The statement follows by rephrasing the
  claim on the original $b_j$, for a generic
  $\log \alpha_\infty \neq 0$.
\end{proof}

\section{Applications}
\label{sec:applications}

In this section, we discuss a few applications of our theory. Throughout 
this section, the radii of inclusion obtained numerically are only reported 
with a couple of significant digits, for improved readability. 

\subsection{Updating the tropical roots}
  \label{sec:updatingRoots}
  
We begin by 
considering the problem of finding approximate inclusion sets
for the eigenvalues of a Laurent series (or 
its zeros, for scalar problems). We assume that 
a known (scalar, for simplicity) Taylor or Laurent series 
is perturbed in a finite number of coefficients, and we leverage
the results in Theorem~\ref{thm:gs}. 


  \begin{example} \label{ex:example1_np}
  
  We consider
  \[
    g(\lambda) = \eu^\lambda + p(\lambda), \qquad 
    p(\lambda) := 12\lambda - \frac{\lambda^2}{5} + 12\lambda^3 - 0.04 \lambda^4 + 10^{-3} \lambda^5 
     - 0.002 \lambda^6. 
  \]
  The Newton polygon for $\eu^\lambda$ is composed of all the nodes 
  $(i, - \log i!)$, for $i \in \mathbb Z$. Hence, we may apply 
  Theorem~\ref{thm:gs} to compute the Newton polygon for 
  $g(\lambda)$, and then rely on Theorem~\ref{thm:evsLoc} to 
  construct inclusion results. The computation of the convex 
  hull yields 3 segments with well-separated slopes 
  according to Theorem~\ref{thm:evsLoc}, 
  as visible in Figure~\ref{fig:ex-upd-1}. The 
  exponential of their negative slopes yields 
  (approximately) the tropical roots $\alpha_1 \approx  0.0769$,
  $\alpha_2 \approx 1.0337$, and $\alpha_3 \approx 12.2446$. 
  Theorem~\ref{thm:evsLoc} and Theorem~\ref{thm:lower-upper}
  yield the following 
  inclusions:
  \begin{itemize}
    \item The open disc of radius $r \approx 0.038$ and 
      centered at zero, 
      corresponding to the slope of the first segment of 
      the Newton polygon, does not contain any root, 
      as predicted by Theorem~\ref{thm:lower-upper}. 
    \item Similarly, the annulus $\annop(0.17, 0.46)$ does not 
      contain any root; 
      in addition, there is exactly one root in the annulus
      $\ann(0.038, 0.174)$.
    \item Finally, the annulus $\annop(2.40, 5.27)$ does not contain 
      roots, and there are exactly two roots in the annulus 
      $\ann(0.46, 2.40)$.
  \end{itemize}
  These inclusions are displayed in the right 
  plot of Figure~\ref{fig:example1}. The roots in that Figure 
  have been approximated by running a few steps of Newton's 
  iteration starting from a fine grid. 

  \begin{figure}
    \begin{minipage}{.5\linewidth}
      \includegraphics[width=\linewidth]{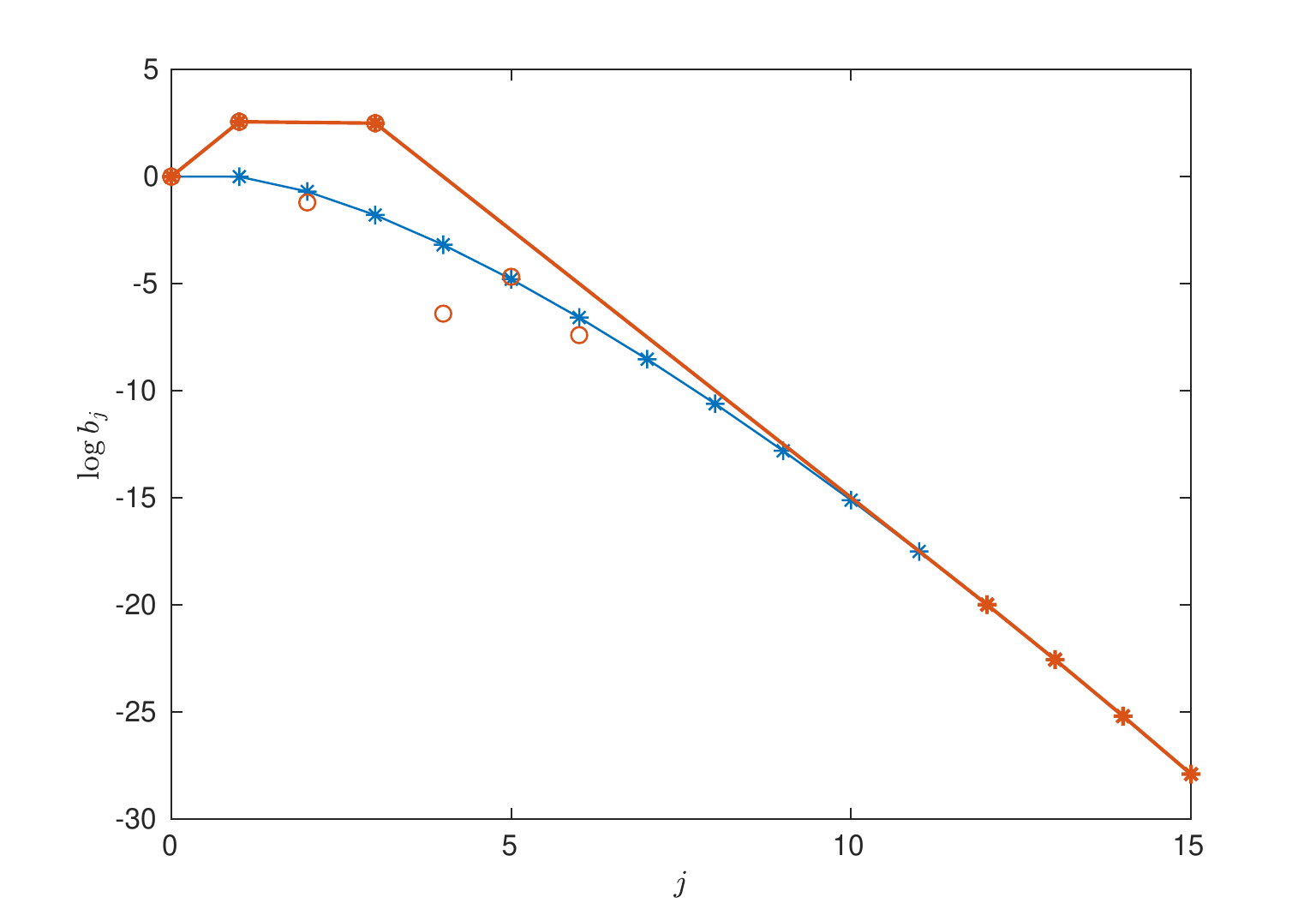}
    \end{minipage}~\begin{minipage}{.5\linewidth}
      \includegraphics[width=\linewidth]{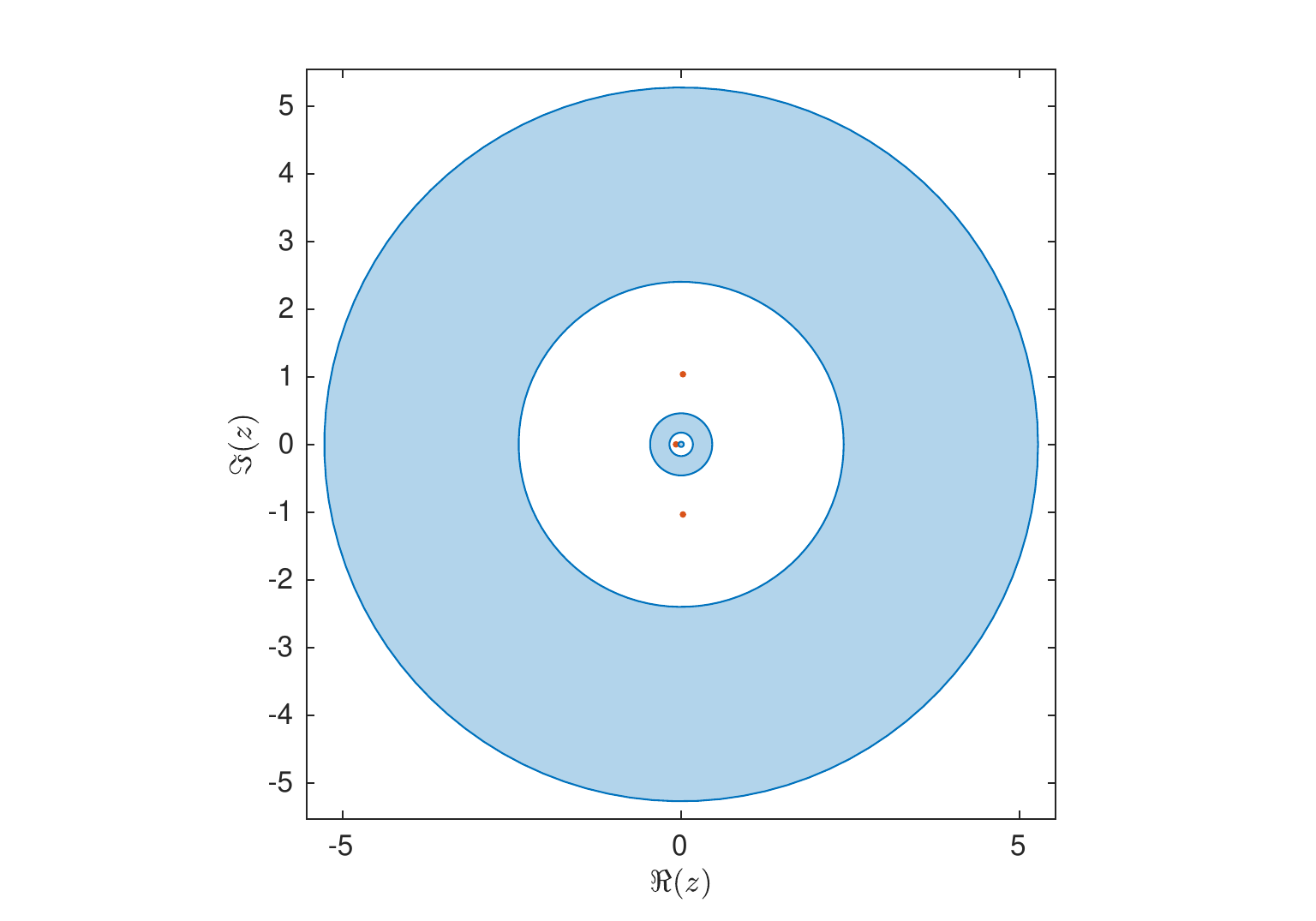}
    \end{minipage}
    \caption{On the left, the Newton polygon of 
    $\eu^\lambda $ (in blue) and the one 
    associated with $g(\lambda)$ (in red). On the right, the 
    exclusion annuli obtained from Theorem~\ref{thm:evsLoc}
    and the approximated roots. }
    \label{fig:example1}
  \end{figure}

\end{example}

  We now consider a variation of Example~\ref{ex:example1_np}
  where we deal with a ``proper" meromorphic function, i.e., not just a Taylor series. 
  \begin{example} \label{ex:example2_np}
    Let 
    \[
      f(\lambda) = \eu^\lambda + \eu^{\frac 1\lambda} - 1,
    \]
and let $f_{n}(\lambda) = \sum_{j=-n}^{n}b_{j}\lambda^{j}$ be a
truncation of $f(\lambda)$ expressed as a Laurent series. Consider the function
    \[
            g(\lambda) = f_{n}(\lambda) + p(\lambda)
      \]
    where $p(\lambda)$ is the Laurent polynomial defined by 
    \[
      p(\lambda) := \eu^6 \lambda^{-9} + \eu^{12} \lambda^{-3} + \eu + \eu^2 \lambda^2 +
      \eu^{-10} \lambda^4 + \eu^{-14} \lambda^5 + \eu^{-20} \lambda^5.  
    \]
    Similarly to Example~\ref{ex:example1_np}, the Newton Polygon for
     $f_{n}(\lambda)$ is easily determined, and is
    composed by the nodes of coordinates $(i, -\log(\abs{i}!))$, with
    $\abs{i} \leq n$. Here we set $n=45$ and  computed numerically the updated
    Newton polygon using Theorem~\ref{thm:gs}. This yields a Newton
    polygon with fewer nodes, with the following inclusion/exclusion
    annuli:
    \begin{itemize}
      \item The annulus $\mathcal A_1 := \annop(0.05, 0.17)$ does not contain 
        any root, and the disc inside it contains $34$ roots minus 
        poles. 
      \item The annulus $\mathcal A_2 = \annop(0.79, 3.41)$ does not contain 
        any root, and exactly $6$ roots are contained between 
        $\mathcal A_1$ and $\mathcal A_2$. 
    \end{itemize}
    \begin{figure}
      \begin{minipage}{.5\linewidth}
        \includegraphics[width=\linewidth]{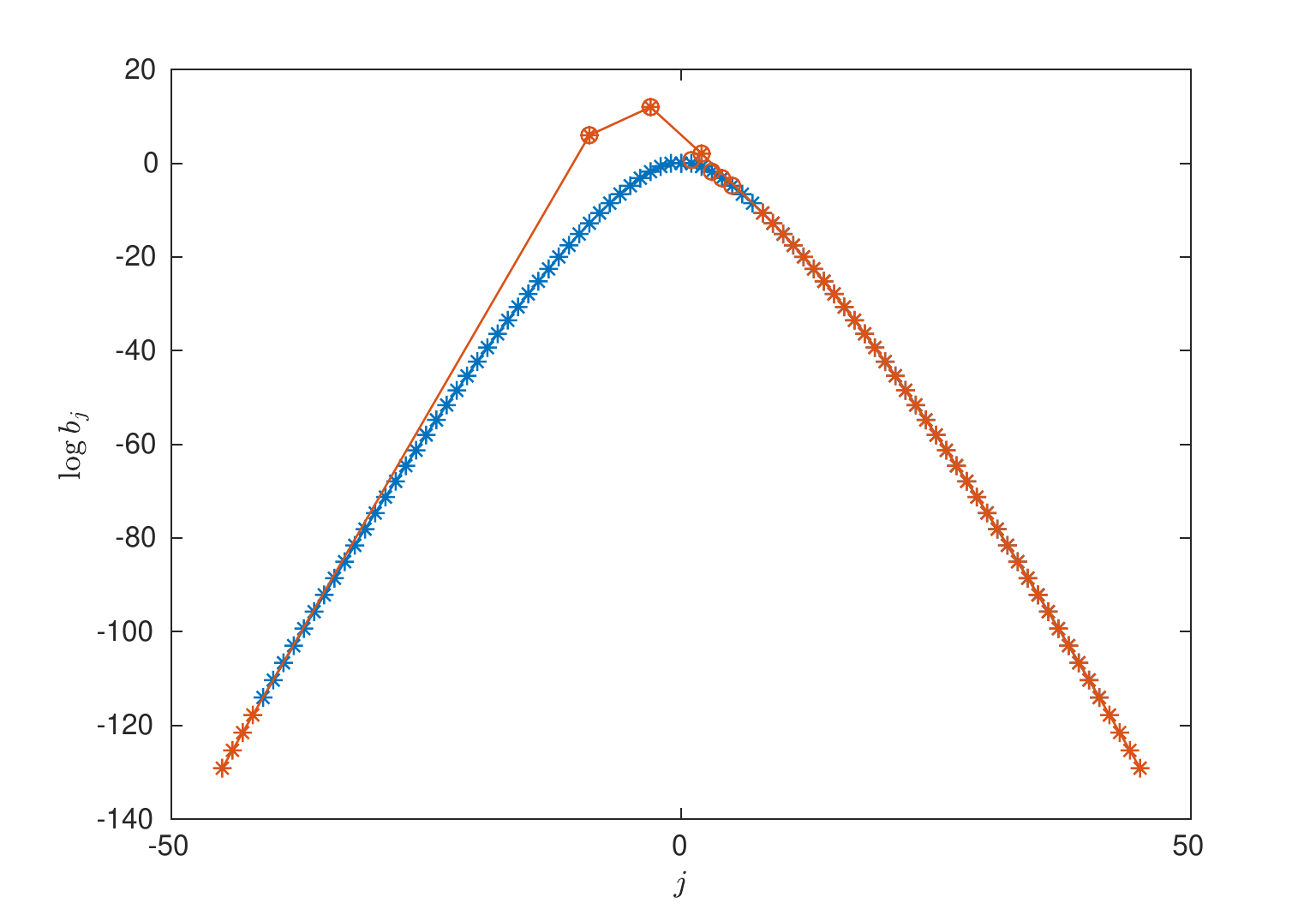}
      \end{minipage}~\begin{minipage}{.5\linewidth}
        \includegraphics[width=\linewidth]{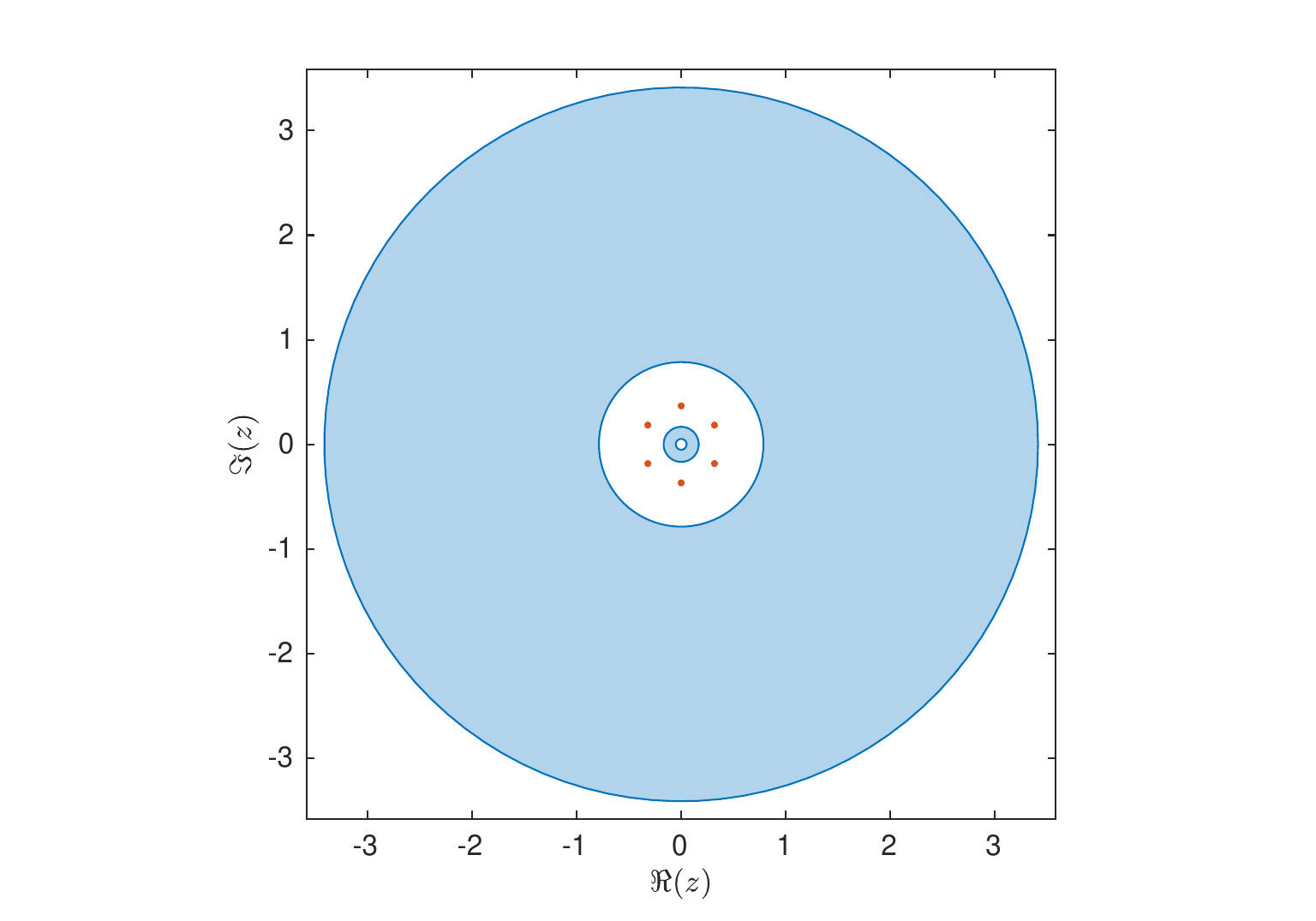}
      \end{minipage}
      \caption{On the left, the Newton polygon of 
        $f_{45}(\lambda)$ (in blue) and the one 
        associated with $g(\lambda)$ (in red). On the right, a zoom on the 
        exclusion annuli obtained from Theorem~\ref{thm:evsLoc}
        and the approximated roots. }
      \label{fig:example2}
    \end{figure}
  \end{example}

  \subsection{Estimating the number of nodes in contour integrals}

  As we hinted  in the introduction, tropical roots are useful if we
  desire to solve an eigenvalue problem through the means of a contour
  integral algorithm, because they give us the location of the target
  set $\Omega$ where to look. One of the most important parameters of
  these procedures is the quadrature rule and the number of quadrature
  points $N$ to be used. The choice often falls on the trapezoidal rule,
  because it is easy to implement and has an exponential drop of
  the error on elliptical contours. However, as far as we know and as this paper is being written, there is
  currently no way to determine an ``optimal'' $N$ a priori so that the
  backward error of the eigenpairs is below a given threshold.
  
  In this section we show how the results of Theorem~\ref{thm:evsLoc}
  can be used to justify a particular choice of $N$. In 2016 Van Barel
  and Kravanja showed a relationship between contour integrals of
  holomorphic functions and filter functions~\cite{bakr:16}. More
  specifically, let $\Omega = \disk(r)$ be the target set and
  \[
    A_{p} = \frac{1}{2\pi\iu r}\int_{\partial \Omega}z^{p}F(z)^{-1}\, dz
    \approx \frac{1}{2\pi\iu r}\sum_{j=0}^{N-1}w_{j}F(z_{j})^{-1} =:\wt{A}_{0}
  \]
  be the basic contour integral for Beyn's algorithm and its
  approximation with the quadrature rule of points and weights
  $(z_{j},w_{j})$~\cite{be:12}. They showed that when we use the
  trapezoidal rule, we can associate the filter function
  
  \[b_{p}(z) = \frac{z^{p}}{1-(\frac{z}{r})^{N}}
  \]
  to $A_{p}$, where $b_{0}(z)$ approximates the function
  \[
    \chi_{{}_\Omega}(z) = \left\{
      \begin{array}{ll}
        1& \text{if } z\in\Omega,\\
        0&  \text{if } z \not\in \Omega.
      \end{array}
    \right .
    \]
  Note  that for  $\epsilon \ll 1$, it follows $\abs{b_{0}(z)} < \epsilon$
    approximately when $\abs{z} > \epsilon^{-1/N}$. In general they
    proved that any eigenvalue $\lambda$ of $F(z)$ that lies outside $\Omega$
    becomes an element of noise for the approximation of $A_p$ of order
    $\bigO(\abs{b_0(\lambda)})$. This has two immediate consequences: if
    there are eigenvalues $\lambda$ near $\Omega$, then the
    quadrature rule needs a large number $N$ of quadrature points to
    reduce $\abs{b_0(z)}$ and hence the noise; on the other hand, if
    there are no eigenvalues near $\Omega$, then there is no need of a large
    $N$, with an obvious saving of computational time. Therefore
    Theorem~\ref{thm:evsLoc} is a perfect tool for this task: the
    annulus of exclusion can help us set an ideal parameter $N$. The next
    examples will serve as a clarification.

    \begin{example}
      \label{ex:polyRoots}
      Consider a matrix polynomial $P(\lambda) = \sum_{j=0}^{4}B_{j}\lambda^{j}$ generated with the \textsc{Matlab}
      commands
  \begin{verbatim}
     rng(42); n = 20;
     B0 = randn(n); B1 = 1e5*randn(n);
     B2 = randn(n); B3 = 1e-2*randn(n); B4 = 1e3*randn(n);
     \end{verbatim}
      The associated tropical polynomial $\tropx P(x) =  \max_{1\leq j\leq
        4}\normt{B_{j}}x^{j}$ has two roots
  \[
  \alpha_{1}  = \normt{B_{0}}/\normt{B_{1}}\approx
  10^{-5},\qquad
  \alpha_{2} = (\normt{B_{1}}/\normt{B_{4}})^{1/3} \approx 4.8
  \]
  of multiplicity $1$ and $3$, respectively. It holds that
  $\delta_1 < (1+2\cond(B_0))^{-2} \approx 10^{-5}$, hence there are
  $20$ eigenvalues in $\disk((1+2\cond(B_{0}))\alpha_{1})$, and no
  eigenvalues in
  $\annop((1+2\cond(B_{0}))\alpha_{1}, (1+2\cond(B_{0}))^{-1}\alpha_{2})
  \approx\annop(0.0012, 0.039)$. If we are interested in the eigenvalues
  inside $\Omega = \disk((1+2\cond(B_{0}))\alpha_{1}):=\disk(r)$, then
  we can set $N=10$, given that
      \[
          b_{0}(z) = \frac{1}{1-(\frac{z}{r})^{12}} \approx \bigO(10^{-16})
    \]
      when $\abs{z}\approx 0.039$. Define now the backward error of
      an eigenpair $(\lambda, v)$ as
      \[
        \eta(\lambda,v) = \frac{\norm{F(\lambda)v}_{2}}{\norm{F}_{{}_{\Omega}}\norm{v}_{2}},
      \]
      with
      \[
        \norm{F}_{{}_{\Omega}} : = \sup_{z\in \Omega}\norm{F(z)}_2.
        \]
      Then, a  basic implementation of
      Beyn's algorithm returns a backward error of approximately
      $10^{-15}$ for all the $20$ eigenpairs.

    \end{example}
  
    \begin{example}
      Consider again the polynomial $P(\lambda)$ of
  Example~\ref{ex:polyRoots} and the function $F(\lambda) = \lambda I
  + \eu^{-\lambda}B$, where $B\in \R^{20\times 20}$ is a randomly generated matrix
  and $I$ is the identity of the same size. Using the tools developed in this section, we
  want to find the tropical roots of $\tropx(P(x) + F(x)) =: \tropx
  G(x)$. Note that  $\tropx G(x) = \sup_{j\in \N}\normt{C_{j}}x^{j}$ with
  \[
    \left\{
    \begin{split}
      C_{j} &= B_{j} +\frac{C}{j!}, \qquad \text{for }j=0,2,3,4,\\
      C_{1} &= B_{1} + C + I,\\
      C_{j} &= \frac{C}{j!}, \qquad \text{for }j > 4.
    \end{split}\right.
  \]
  In Figure~\ref{fig:expPoly}
   we plotted the Newton polygon for $\tropx G(x)$ (using the spectral norm). There we can see the
   first three tropical roots
 \[
     \begin{split}
  \alpha_{1}  &= \normt{C_{0}}/\normt{C_{1}}\approx
  2\cdot10^{-5}\\
  \alpha_{2} &= (\normt{C_{1}}/\normt{C_{4}})^{1/3} \approx 4.8\\
  \alpha_{3} &= (\normt{C_{4}}/\normt{C_{21}})^{1/17} \approx 21.3
  \end{split}
  \]
  By applying the same reasoning of Example~\ref{ex:polyRoots}, we know
  there still are $20$ eigenvalues in
  $\Omega = \disk((1+2\cond(C_{0}))\alpha_{1}):=\disk(r)$ and no
  eigenvalues in the annulus
  $\annop((1+2\cond(C_{0}))\alpha_{1}, (1+2\cond(C_{0}))^{-1}\alpha_{2})
  \approx\annop(0.004, 0.025)$. Hence we can set $N=18$ so that
  $b_{0}(z) = \bigO(10^{-15})$ for $\abs{z} \approx 0.025$. Once again,
  a basic implementation of Beyn's algorithm under these settings return
  all $20$ eigenvalues with a backward error of order
  $\bigO(10^{-15})$.
  
  \begin{figure}
      \centering\includegraphics[width=0.85\textwidth]{./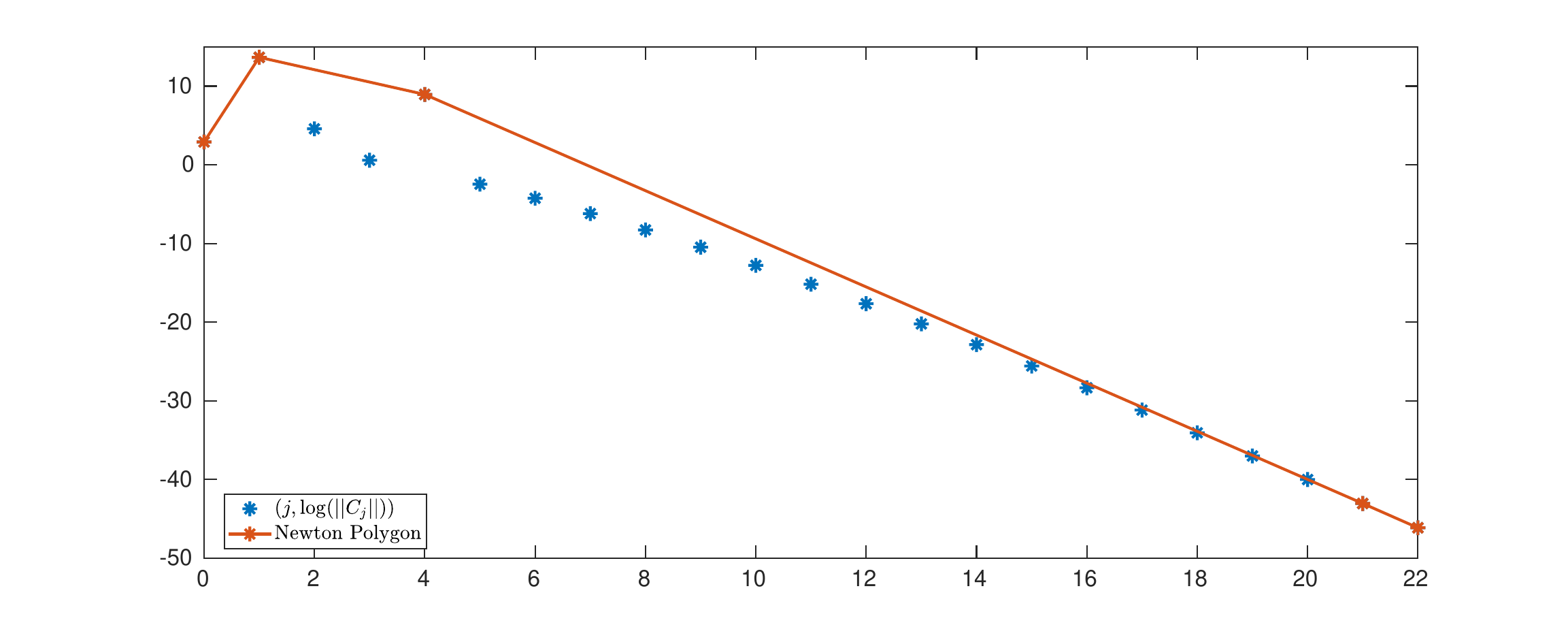}
      \caption{The (truncated) Newton polygon of $\tropx G(x)$. We can
        clearly see the first $3$ tropical roots.}
      \label{fig:expPoly}
  \end{figure}

    \end{example}

  \section{Conclusion}
\label{sec:conclusions}
In this work we generalized the concept of tropical roots from
polynomials to functions expressed as Taylor or Laurent series. We
showed that in order to have a consistent theory, the tropical roots
of $\tropx f(x)$ include, possibly, not only the points of nondifferentiability but also 
the extrema of the interval where $\tropx f(x)$ is defined as a
real-valued function (and zero).

In the second part we proved that the eigenvalues localization theorem
stated for matrix polynomials in~\cite{nosh:15} holds true for
meromorphic Laurent functions, under minor adjustments. Furthermore,
we showed how to update the Newton polygon of a Laurent function
$\tropx f(x)$ to obtain the Newton polygon of $\tropx (f(x) +p(x))$,
where $p(x)$ is a polynomial, and we proved that the strategy
terminates almost surely. Finally, we hinted at a new possible
application, where the presence or absence of eigenvalues in the
neighbourhood of a target set $\tset$ can help determine the optimal
number of quadrature points for contour integral eigensolvers.


Tropical roots of polynomials are already beneficial for scaling and localization in the context of polynomial eigenvalue problems and scalar polynomial equation solvers. We hope that this work may initiate the use of tropical roots of meromorphic functions for similar purposes.

 \printbibliography  

\end{document}